\documentclass[onefignum,onetabnum]{siamart220329}


\usepackage[showonlyrefs]{mathtools}
\usepackage{lipsum}
\usepackage{amsfonts}
\usepackage{graphicx}
\usepackage{epstopdf}
\usepackage{algorithmic}

\ifpdf
  \DeclareGraphicsExtensions{.eps,.pdf,.png,.jpg}
\else
  \DeclareGraphicsExtensions{.eps}
\fi


\newsiamremark{remark}{Remark}
\newsiamremark{hypothesis}{Hypothesis}
\crefname{hypothesis}{Hypothesis}{Hypotheses}
\newsiamthm{claim}{Claim}

\headers{Extending linear response with synthetic forcings}{R. Spacek and G. Stoltz}

\title{Extending the regime of linear response with synthetic forcings
}

\author{Renato Spacek 
		\thanks{CERMICS (\'Ecole des Ponts), Marne-la-Vall\'ee, France \& MATHERIALS team, Inria Paris, France (\email{renato.spacek@enpc.fr}, \email{gabriel.stoltz@enpc.fr})}
\and Gabriel Stoltz \footnotemark[2]}

\usepackage{amsopn}

\usepackage{autonum}
\usepackage{amssymb}
\usepackage{mathrsfs}
\usepackage{subcaption}
\usepackage{multirow}
\usepackage{booktabs}

\newcommand{\T}{\mathbb{T}}
\newcommand{\Z}{\mathbb{Z}}
\newcommand{\E}{\mathbb{E}}
\newcommand{\R}{\mathbb{R}}

\newcommand{\N}{\mathbb{N}}
\newcommand{\eps}{\varepsilon}
\renewcommand{\div}{\operatorname{div}}
\DeclareMathOperator*{\argmin}{arg\,min}
\DeclareMathOperator*{\argmax}{arg\,max}
\newcommand{\e}{\mathrm{e}}
\newcommand{\f}{\mathfrak{f}}
\newcommand{\ind}{\mathbf{1}}
\renewcommand{\L}{\mathcal{L}}
\newcommand{\Lx}{\widetilde{\L}_\mathrm{extra}}
\newcommand{\Lp}{\widetilde{\L}_\mathrm{phys}}
\newcommand{\Lt}{\widetilde{\L}}
\newcommand{\Lr}{\L_0}
\renewcommand{\S}{\mathscr{S}}
\newcommand{\bigO}{\mathrm{O}}
\newcommand\dpsi[2]{[\Psi^h_{#1}]_{#2}}
\newcommand\ddpsi[2]{[\partial\Psi^h_{#1}]_{#2}}
\newcommand\dfxn[2]{[#1]_{#2}}
\newcommand\sn[2]{{#1}\times 10^{#2}}
\renewcommand{\geq}{\geqslant}
\renewcommand{\leq}{\leqslant}

\usepackage{enumitem}
\setenumerate[1]{label={(\arabic*)}}
\setlist[enumerate]{itemsep=0.75em, topsep=0.75em}

\def\code#1{\texttt{#1}}

\newtheorem{assumption}{Assumption}
\newsiamremark{example}{Example}

\ifpdf
\hypersetup{
  pdftitle={},
  pdfauthor={R. Spacek, G. Stoltz}
}
\fi



\begin{document}

\maketitle

\begin{abstract}
Transport coefficients, such as the mobility, thermal conductivity and shear viscosity, are quantities of prime interest in statistical physics. At the macroscopic level, transport coefficients relate an external forcing of magnitude $\eta$, with $\eta \ll 1$, acting on the system to an average response expressed through some steady-state flux. In practice, steady-state averages involved in the linear response are computed as time averages over a realization of some stochastic differential equation. Variance reduction techniques are of paramount interest in this context, as the linear response is scaled by a factor of $1/\eta$, leading to large statistical error. One way to limit the increase in the variance is to allow for larger values of $\eta$ by increasing the range of values of the forcing for which the nonlinear part of the response is sufficiently small. In theory, one can add an extra forcing to the physical perturbation of the system, called synthetic forcing, as long as this extra forcing preserves the invariant measure of the reference system. The aim is to find synthetic perturbations allowing to reduce the nonlinear part of the response as much as possible. We present a mathematical framework for quantifying the quality of synthetic forcings, in the context of linear response theory, and discuss various possible choices for them. Our findings are illustrated with numerical results in low-dimensional systems.
\end{abstract}

\begin{keywords}
Langevin dynamics, linear response, variance reduction, nonequilibrium molecular dynamics
\end{keywords}

\begin{MSCcodes}
82C31, 65C40, 65N06, 46N30, 82C05
\end{MSCcodes}


\section{Introduction}
Statistical physics provides a formulation to study the macroscopic properties of interacting particle systems based on the behavior of their microscopic constituents. Its numerical realization, known as  ``molecular dynamics'', has played an essential role in science for the past 70 years; see \cite{ciccotti_md} for a historical perspective, and \cite{frenkel_smit, tuckerman, leimkuhler_matthews_book, allen_tildesley} for reference textbooks. One of the major aims of molecular dynamics is to compute macroscopic quantities or thermodynamic properties, typically given by averages of basic dynamical variables, which allows to obtain quantitative information on a system. Molecular dynamics can be thought of as a \emph{numerical microscope}, as it bridges the gap between theoretical and experimental work, playing an important part across (essentially) all of science.

One particular interest in molecular dynamics is the computation of transport coefficients, such as the mobility, thermal conductivity and shear viscosity. At the macroscopic level, transport coefficients relate an external forcing acting on the system (\emph{i.e.} a perturbation on the equilibrium dynamics) to an average response expressed through some steady-state flux (of energy, momentum, charged particles etc). From a mathematical perspective, this can be done by introducing a reference dynamics which models systems at equilibrium, and perturbing it by some external forcing of magnitude $\eta\in\R$ mimicking the driving exerted on the system to create some flux. This corresponds to the so-called nonequilibrium molecular dynamics (NEMD) \cite{ciccotti_nemd}.

To realize this in molecular dynamics simulations, we consider general perturbed dynamics of the form
\begin{equation}
    dX_t^\eta = \left(b(X_t^\eta) + \eta F(X_t^\eta)\right) dt + \sigma(X_t^\eta) \, dW_t,
\end{equation}
where $F$ is the external forcing and $\eta\in\R$ its magnitude, and $W_t$ a standard Brownian motion. In general, it is observed that the response $\E_\eta(R)$ of the system, given by the steady-state average of a physical observable $R$ of interest which has average 0 for unperturbed dynamics ($\eta=0$), is proportional to the magnitude of the forcing for small values of the forcing. This corresponds to the linear response regime; see Figure \ref{fig:synthetic_mock}. When this is the case, the linear response $\rho_1$ is computed as
\begin{equation}
    \rho_1 = \lim_{\eta \to 0} \frac{\E_\eta(R)}{\eta}.
    \label{linRes_intro}
\end{equation}
By definition, transport coefficients are the proportionality constants $\rho_1$ relating the response $\E_\eta(R)$ to the forcing for $|\eta|$ small. One typical example is the case where the force $F$ is constant and the configuration space is a torus. Such a forcing is considered to compute the mobility of particles in the system, which we discuss in more detail in Section \ref{subsubsec:mobility}. Another example is the case where $F$ has components in one direction only, whose magnitude depends on the position in another direction. Such forcings can be used to estimate the shear viscosity through the so-called sinusoidal transverse force method~\cite{gosling1975}.

In practice, steady-state averages involved in the linear response are computed as ergodic averages over a very long trajectory of the system, obtained as a realization of the stochastic differential equation (SDE) \eqref{noneq_SDE}. Although there are several such ways of computing these steady-state averages, it is typically done in one of two ways: either based on (i) equilibrium techniques based on Green--Kubo formulae, which are integrated autocorrelation functions \cite{green,kubo}; or (ii) nonequilibrium steady-state methods where the limit \eqref{linRes_intro} is numerically estimated; see \cite{ciccotti_nemd}. Both numerical methods have advantages and drawbacks, and are constantly undergoing algorithmic advances; see \cite{gk1,plechac_nemd}, and \cite{mcqmc_stoltz} for a comparison and discussion between the methods. In this work, however, we focus exclusively on the nonequilibrium approach.

In the NEMD approach, a standard estimator to compute the linear response~\eqref{linRes_intro} is
\begin{equation}
    \widehat{\Phi}_{\eta, t}=\frac{1}{\eta t} \int_0^t R(X_s^{\eta}) \, ds,
    \label{estimator_intro}
\end{equation}
where the fixed value of $\eta\ne 0$ should be small enough in absolute value in order for the nonlinear part of the response to be negligible. Although there are various sources of error associated with the estimator \eqref{estimator_intro}, as made precise in Section \ref{subsec:analysis}, the variance associated with \eqref{estimator_intro} is the main issue. In particular, in the context of computing transport coefficients, the large signal-to-noise ratio leads to a very large statistical error, as we are averaging very small linear responses \cite{control_variates}. The estimator~\eqref{estimator_intro}, for instance, has asymptotic variance of order $\bigO(t^{-1}\eta^{-2})$, much larger than the usual asymptotic variance of order $\bigO(t^{-1})$ associated with its equilibrium averages counterpart, as the linear regime is only valid for $|\eta|\ll 1$. This motivates the interest of variance reduction techniques, which are used to decrease the statistical error in the estimated averages. Examples of standard variance reduction techniques for Monte Carlo simulations, for which a review is provided in \cite{caflisch_1998}, are antithetic variables, stratification, control variate methods and importance sampling.

One of the main challenges with computing nonequilibrium steady-state averages, however, is that traditional equilibrium variance reduction techniques cannot be used the same way (see~\cite[Section 5.4]{pde_mono} for a detailed discussion on obstructions to this end). Although many practitioners of molecular dynamics realize that the computation of transport coefficients is a difficult numerical issue, there were only a handful of attempts to develop dedicated variance reduction techniques. Many practitioners still use direct, brute force numerical methods based on a time integration of the dynamics.

In order to reduce the statistical error of order $1/(\eta^2 t)$, one idea is to extend the regime of linear response, which consequently allows to use larger values of $\eta$. One approach for doing so is to use synthetic forcings. Such forcings, as presented in this work, were introduced by Evans and Morriss~\cite{evans} (``synthetic fields'', in their terminology) with the purpose to produce a mechanical analog of a thermal transport process. This notion was originally used by Gillan and Dixon \cite{gillan_dixon}, then abstracted and extended by Evans and Morriss in \cite{evans}. The name \emph{synthetic} is used to denote that the external fields under consideration do not exist in nature.

The key idea behind synthetic forcings is that there are infinitely many forcings which lead to the same transport coefficient. This flexibility should be used to develop better numerical methods for the computation of transport coefficients. In theory, one can add an extra forcing to the physical perturbation of the system, as long as this extra forcing preserves the invariant measure of the reference system, which in turn preserves the linear response. We call the resulting perturbation a synthetic forcing, as it has no physical representation; it is a mathematical device used to simplify the problem at hand. 

The aim is to find synthetic perturbations allowing to reduce the nonlinear part of the response as much as possible (see Figure \ref{fig:synthetic_mock}) in order to consider larger values of~$|\eta|$. In this article, we present a mathematical framework for choosing and quantifying the quality of synthetic forcings, in the context of linear response theory, and discuss various possible choices for them. We illustrate the analysis with numerical results in low dimensional systems. 

\begin{figure}
	\centering
	\includegraphics[width=0.75\textwidth]{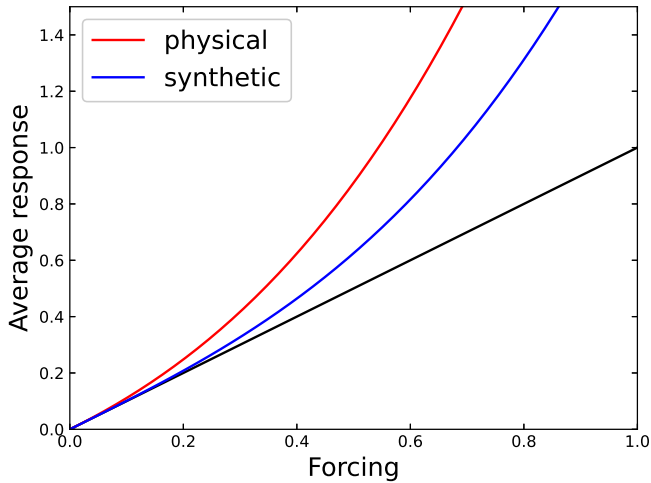}
	\caption{Illustration of the effect of synthetic forcings. Ideally, the nonlinear part of the average response $\E_\eta(R)$ is smaller, so that $\E_\eta(R) \approx \rho_1\eta$ for larger values of $|\eta|$ than with physical forcings.}
	\label{fig:synthetic_mock}
\end{figure}

This work is organized as follows. We present in Section \ref{sec:review} a review of linear response theory and associated computational techniques, and state technical results which make precise some error estimates, in particular bounds on the asymptotic variance of time averages such as \eqref{estimator_intro}. We then introduce the notion of synthetic forcings in Section \ref{sec:extending}, where we also give examples and discuss them in more detail. We next demonstrate the possibly dramatic benefits in terms of statistical error with some numerical results in low dimensions in Section \ref{sec:numerical}, namely one and two-dimensional overdamped Langevin dynamics, and one-dimensional Langevin dynamics. We finally discuss in Section \ref{sec:perspectives} the extensions and perspectives of this approach.

\section{Linear response and associated computational techniques}
\label{sec:review}
In this section, we review linear response theory and the computational techniques allowing to compute transport coefficients. The framework we consider is that of stochastic dynamics, ergodic for the Boltzmann--Gibbs measure, which are perturbed by external forcings.

We start in Section~\ref{subsec:ref_dyn} by setting up the framework for general time-homogeneous SDEs, and the specific dynamics we consider, namely (non)equilibrium overdamped Langevin and Langevin dynamics. We then review in Section~\ref{subsec:response_review} linear response theory, the definition of transport coefficients and discuss the associated standard numerical techniques to estimate these coefficients. We finally present the numerical analysis of nonequilibrium molecular dynamics in Section \ref{subsec:analysis}.

\subsection{Reference dynamics and perturbation}
\label{subsec:ref_dyn}
We start by describing in Section~\ref{subsubsec:gen_set} the general setting for linear response theory for a general stochastic differential equation (SDE). We introduce in particular some assumptions on the dynamics which will be used in the analysis throughout this paper. We next describe the dynamics and their nonequilibrium perturbations for overdamped Langevin and Langevin dynamics in Sections \ref{subsubsec:ovd_review} and \ref{subsubsec:lang_review}, respectively.

\subsubsection{General setting}
\label{subsubsec:gen_set}
\paragraph{Reference dynamics} Consider a general time-homogeneous SDE defined on the state-space $\mathcal{X}$, where $\mathcal{X}$ is typically the full space $\R^d$ or a bounded domain with periodic boundary conditions $\T^d$ (with $\T = \R/\Z$ the one-dimensional torus):
\begin{equation}
    dX_t = b(X_t) \, dt + \sigma(X_t) \, dW_t,
    \label{general_SDE}
\end{equation}
for a given initial condition $x_0 \in \mathcal{X}$, a standard Brownian motion $W_t\in \R^m$, and where~$b \colon \mathcal{X} \to \R^d$ and $\sigma \colon \mathcal{X} \to \R^{d\times m}$ are assumed to be such that there exists a unique solution to \eqref{general_SDE}. A simple setting is to assume that $b$ and $\sigma$ are $C^\infty$, hence locally Lipschitz, which is used to prove existence and uniqueness of the solution~\cite{luc,mao,khasminskii}. The SDE \eqref{general_SDE} is associated with the following infinitesimal generator
\begin{equation}
    \Lr = b^T\nabla + \frac{1}{2}\sigma\sigma^{T} \colon \nabla^2,
    \label{gen_generator}
\end{equation}
where $:$ denotes the Frobenius inner product, and $\nabla^2$ is the Hessian operator. More explicitly, for some given $C^\infty$ test function $\varphi \colon \R^d \to \R$, the operator $\Lr$ acts as
\begin{equation}
    \Lr\varphi = \sum_{i=1}^d b_i\partial_{x_i}\varphi + \frac{1}{2}\sum_{i=1}^d\sum_{j=1}^d\sum_{k=1}^m \sigma_{i,k}\sigma_{j,k}\partial^2_{x_i,x_j}\varphi .
\end{equation}
We assume that the dynamics \eqref{general_SDE} has a unique invariant probability measure with a density with respect to the Lebesgue measure denoted by $\psi_0(x)$. This density satisfies the stationary Fokker--Planck equation
\begin{equation}
    \Lr^\dagger \psi_0 = 0,
    \label{general_FP}
\end{equation}
where $\Lr^\dagger$ denotes the $L^2$-adjoint of the operator $\Lr$, acting as
\begin{equation}
    \Lr^\dagger\varphi = -\div(b\varphi) + \frac{1}{2}\nabla^2 : \left(\sigma\sigma^T\varphi\right).
\end{equation}
\paragraph{Perturbation of the reference dynamics} We next consider a perturbation of the reference dynamics \eqref{general_SDE}, obtained by adding to the drift field $b$ some smooth nongradient forcing $F \colon \mathcal{X} \to \R^d$ of magnitude $\eta \in \R$:
\begin{equation}
    dX_t^\eta = \left(b(X_t^\eta) + \eta F(X_t^\eta)\right) dt + \sigma(X_t^\eta) \, dW_t.
    \label{noneq_SDE}
\end{equation}
The generator of \eqref{noneq_SDE} is denoted by $\L_\eta = \L_0 + \eta\Lp$, where $\Lp$ is the generator associated with the physical perturbation: 
\begin{equation}
    \Lp = F^T\nabla.   
\end{equation}
We assume that $b$, $\sigma$ and $F$ are such that the following assumption holds.
\begin{assumption}[{\bf Uniqueness of the invariant measure}]
\label{assumption1}
 The dynamics \eqref{noneq_SDE} admits a unique invariant probability measure for any $\eta \in \mathbb{R}$, with a smooth density~$\psi_\eta$ with respect to the Lebesgue measure. Moreover, trajectorial ergodicity holds: for any observable $R \in$ $L^{1}\left(\psi_\eta\right)$, and any initial condition $X_0^\eta$,
\begin{equation}
    \mathbb{E}_\eta(R) := \int_{\mathcal{M}} R(x) \psi_\eta(x) \, dx = \lim _{T \rightarrow \infty} \frac{1}{T} \int_0^T R(X_t^\eta) \, dt \quad \text { a.s. }
\end{equation}
\end{assumption}
Note that $\psi_\eta$ solves the Fokker--Planck equation
\begin{equation}
	\left(\Lr + \eta\Lp\right)^\dagger\psi_\eta = 0.
\end{equation}
Sufficient conditions for Assumption \ref{assumption1} to hold are discussed after Assumption \ref{assumption2} below. Although~$\psi_\eta$ can be shown to uniquely exist, its analytical expression is generally not known. Note that in the case~$\eta = 0$, the dynamics \eqref{noneq_SDE} reduces to the reference dynamics \eqref{general_SDE}. 

 We need to make precise some estimates on the evolution semigroup associated with \eqref{noneq_SDE} for the analysis presented in Section \ref{subsec:analysis}. We consider to this end weighted spaces of bounded functions. To define them, we first introduce a family of Lyapunov functions denoted by $(\mathcal{K}_n)_{n\in\mathbb{N}}$, with $\mathcal{K}_n \colon \mathcal{X} \to [1,+\infty)$ such that
\begin{equation}
    \forall n \geq 0, \qquad \mathcal{K}_n \leq \mathcal{K}_{n+1}.
\end{equation}
The associated weighted $B^\infty$ spaces are
\begin{equation}
    B_n^\infty = \left\{\varphi \text{ measurable} \, \middle| \, \|\varphi\|_{B_n^\infty} := \sup _{x \in \mathcal{X}}\left| \frac{\varphi(x)}{\mathcal{K}_n(x)} \right| < +\infty\right\}.
\end{equation}
In particular, for any $\varphi \in B^\infty_m$, it holds that $|\varphi| \leq \|\varphi\|_{B^\infty_m} \mathcal{K}_m$. We can then introduce the space $\S$, which consists of all functions $\varphi \in C^\infty(\mathcal{X})$ which grow at most like $\mathcal{K}_n$ for some $n$, and whose derivates also grow at most like $\mathcal{K}_m$, possibly with $m > n$ and $m$ depending on the order of derivation. Denoting by $\partial^k = \partial_{x_1}^{k_1}\dotsc\partial_{x_d}^{k_d}$ for~$k = (k_1,\dotsc,k_d) \in \N^d$,
\begin{equation}
    \S = \left\{\varphi \in C^\infty\left(\mathcal{X}\right) \, \middle| \, \forall k \in \mathbb{N}^d, \; \exists n \in \mathbb{N}, \; \partial^k \varphi \in B_n^\infty\right\}.
\end{equation}
When $\mathcal{X}$ is bounded, it is possible to choose $\mathcal{K}_n = 1$ for all $n\geq 0$, in which case~$\S = C^\infty(\mathcal{X})$. For unbounded spaces, a typical choice is $\mathcal{K}_n = 1+|x|^n$, in which case the functions $\varphi \in \S$ and their derivatives grow at most polynomially. We also consider the subspace $\S_0=\Pi_0 \S$ of functions with average 0 with respect to $\psi_0$, where $\Pi_\eta$ for~$\eta \in \R$ denotes the projection operator
\begin{equation}
    \Pi_\eta \varphi = \varphi - \int_\mathcal{X} \varphi \, \psi_\eta.
\end{equation}
We denote by $\|\cdot\|_{\mathcal{B}(E)}$ the operator norm on the Banach space $\mathcal{B}(E)$ of bounded linear operators on a Banach space $E$, defined by
\begin{equation}
    \|\mathcal{A}\|_{\mathcal{B}(E)} = \sup_{g\in E\setminus\{0\}} \frac{\|\mathcal{A}g\|_E}{\|g\|_E}.
\end{equation}
 Before we state the next assumption, we let $\mathcal{A}^*$ denote the adjoint of $\mathcal{A}$ on the space~$L^2(\psi_0)$. More explicitly, for any test functions $\varphi, \phi \in C^\infty$ with compact support,
\begin{equation}
    \int_\mathcal{X} (\mathcal{A}\varphi)\phi \, \psi_0 = \int_\mathcal{X} \varphi(\mathcal{A}^*\phi) \, \psi_0.
    \label{L2star_adj}
\end{equation}
The action of these operators can be found via integration by parts. In particular, for $\psi_0(x)$ a probability density proportional to $\e^{-\beta U(x)}$, an integration by parts shows that $\partial_{x_i}^* = -\partial_{x_i} + \beta\partial_{x_i} U$. In addition, note that~$\nabla_x^*\nabla_x = \sum_{i=1}^d \partial_{x_i}^* \partial_{x_i}$.

Note that the Fokker--Planck equation \eqref{general_FP}, written in terms of the $L^2$-adjoint, can also be written with the $L^2(\psi_0)$-adjoint as $\Lr^*\ind = 0$. We make the following assumption on the Lyapunov functions.
\begin{assumption}[{\bf Lyapunov estimates}]
\label{assumption2}
The space $\S$ is dense in $L^2(\psi_\eta)$ for any $\eta \in \R$. For any $n \in \mathbb{N}$, the $L^{2}(\psi_\eta)$ norms of the Lyapunov functions are uniformly bounded on compact sets of $\eta$: for any $\eta_*>0$, there exists a constant $C_{n, \eta_{*}} < +\infty$ such that
\begin{equation}
    \forall|\eta| \leqslant \eta_*, \quad\left\|\mathcal{K}_n\right\|_{L^2\left(\psi_\eta\right)} \leqslant C_{n, \eta_*}
    \label{est1}.
\end{equation}
Moreover $-\mathcal{L}_{\eta}$ is invertible on $\Pi_{\eta} B_n^\infty$, and the inverse generator is bounded uniformly on compact sets of $\eta$:
\begin{equation}
    \forall|\eta| \leqslant \eta_*, \quad\left\|-\mathcal{L}_\eta^{-1}\right\|_{\mathcal{B}\left(\Pi_\eta B_n^\infty\right)} \leqslant K_{n, \eta_*}.
    \label{est3}
\end{equation}
Additionally,
\begin{equation}
    \label{est2}
    \forall |\eta| \leq \eta_*, \quad \forall n \geqslant 1, \qquad \sup _{t \in \mathbb{R}_+} \left\|\e^{t\L_\eta}\mathcal{K}_n\right\|_{B^\infty_n} \leqslant M_{n, \eta_{*}} < +\infty
\end{equation}
We finally assume that for any $n,n' \in \mathbb{N}$, there exists $m\in\mathbb{N}$ such that $\mathcal{K}_n\mathcal{K}_{n'} \in B^\infty_m$.
\end{assumption}
The estimates \eqref{est1} through \eqref{est2} are usually obtained from Lyapunov conditions of the form
\begin{equation}
    \L_\eta\mathcal{K}_n \leq -a_{n,\eta}\mathcal{K}_n + b_{n,\eta},
    \label{est_lyapunov}
\end{equation}
for some $a_{n,\eta} > 0$ and $b_{n,\eta} \in \R$. After integration against $\psi_\eta$, and making use of the invariance of~$\psi_\eta$ by the dynamics,
\begin{equation}
    0 = \int_\mathcal{X}\L_\eta \mathcal{K}_n \, \psi_\eta \leq -a_{n,\eta}\int_\mathcal{X}\mathcal{K}_n \, \psi_\eta + b_{n,\eta},
\end{equation}
so that
\begin{equation}
    1 \leq \int_\mathcal{X}\mathcal{K}_n \, \psi_\eta \leq \frac{b_{n,\eta}}{a_{n,\eta}}.
\end{equation}
The condition \eqref{est1} then follows from the last statement of Assumption \ref{assumption2} since, for any~$n\geq 1$, there exists $C_n>0$ and $m\geq1$ such that $1\leq \mathcal{K}_n^2 \leq C_n\mathcal{K}_m$. Condition~\eqref{est_lyapunov} also implies the existence of an invariant probability measure for any~$\eta\in\R$ when a minorization condition holds \cite{minorization}. The minorization condition typically follows from a controllability argument and (hypo)ellipticity conditions, see for instance~\cite{mattingly,luc}. As for Assumption \ref{assumption1}, trajectorial ergodicity holds when the generator~$\L_\eta$ is either elliptic or hypoelliptic, and there exists an invariant probability measure with positive density with respect to the Lebesgue measure \cite{kliemann}.

 We need an assumption in our analysis on the generator $\Lr$ of the reference dynamics, which will be useful when stating some technical results later on.
\begin{assumption}[{\bf Stability of smooth functions by inverse operators}]
\label{assumption3}
The space $\S$ is stable by the generator $\L_0$, and $\mathcal{L}_{0}$ and $\mathcal{L}_{0}^{*}$ are invertible on $\S_{0}$. This means that, for any $\varphi \in \S_0$, there exists a unique solution $\Psi \in \S_0$ to the Poisson equation~$-\mathcal{L}_0\Psi = \varphi$.
\end{assumption}

The generator of the perturbation should also satisfy the next assumption.
\begin{assumption}[{\bf Stability of smooth functions by the perturbation operator}]
\label{assumption4}
The generator $\Lp$ of the perturbation is such that $\S$ is stable by $\Lp$, and $\Lp^*\S \subset~\S_0$.
\end{assumption}
Assumption \ref{assumption4} is easily seen to be satisfied for perturbations $\Lp$ of the form $F^T\nabla$ when $F$ has components in $\S$, which is assumed here and for the remainder of this work. A simple computation based on integrations by parts shows that $\Lp^*\S \subset \S_0$ when $\Lp\ind = 0$. Indeed, for $\varphi \in \S$, using the definition of the $L^2(\psi_0)$-adjoint,
\begin{equation}
	\int_\mathcal{X} \Lp^*\varphi \, \psi_0 = \int_\mathcal{X} \varphi \left(\Lp \ind\right) \, \psi_0 = 0.
	\label{ass4_int}
\end{equation}
One function of particular interest in the range of $\Lp^*$ is the conjugate response function
\begin{equation}
	S = \Lp^*\ind,
	\label{conjugate_response}  
\end{equation}
which we introduce here as it will be useful later. Note that the expression for $S$ comes from the generator $\Lp$ of the perturbation, and not the observable $R$. Let us emphasize that beyond the case of physical forcings $F^T\nabla$, Assumption \ref{assumption4} will be needed for more general operators in Section \ref{sec:extending}.

 Assumptions \ref{assumption1} through \ref{assumption4} are natural for overdamped Langevin and Langevin dynamics, which we present in the next two sections.

\subsubsection{Overdamped Langevin dynamics}
\label{subsubsec:ovd_review}
One typical dynamics used in molecular dynamics is overdamped Langevin dynamics, which evolves only the positions~$q$ of the system.

\paragraph{Reference dynamics} Mathematically, the overdamped Langevin dynamics corresponds to the following SDE with nondegenerate noise:
\begin{equation}
    dq_t = -\nabla V(q_t) \, dt + \sqrt{\frac{2}{\beta}} \, dW_t,
    \label{eq_ovd}
\end{equation}
where $\beta >0$ is proportional to the inverse temperature, and $V$ is a smooth potential. The generator associated with \eqref{eq_ovd} is given by
\begin{equation}
    \Lr = -\nabla V^T \nabla + \frac{1}{\beta}\Delta,
    \label{ovd_Lr}
\end{equation}
with $L^2(\mathcal{X})$-adjoint
\begin{align}
    \Lr^\dagger &= \div(\nabla V\cdot) + \frac{1}{\beta}\Delta.
\end{align}
The dynamics \eqref{eq_ovd} admits the Gibbs measure with density 
\begin{equation}
    \psi_0(q) = \frac{1}{Z}\e^{-\beta V(q)}, \qquad Z = \int_\mathcal{X}\e^{-\beta V(q)} dq < +\infty
\end{equation}
as its unique invariant probability measure. It can indeed be checked that $\Lr^\dagger \psi_0 = 0$, so that $\psi_0$ is a stationary solution to the Fokker--Planck equation. Here and in the remainder of this work, we assume that $\e^{-\beta V} \in L^1(\mathcal{X})$. Note that the generator~\eqref{ovd_Lr} is self-adjoint on the Hilbert space $L^2(\psi_0)$, as $\Lr = -\beta^{-1}\nabla^*\nabla$.

\paragraph{Nonequilibrium perturbation} The perturbed overdamped Langevin dynamics corresponding to \eqref{noneq_SDE} reads
\begin{equation}
    dq_t^\eta = \left(-\nabla V(q_t^\eta) + \eta F(q_t^\eta)\right) dt + \sqrt{\frac{2}{\beta}} \, dW_t.
    \label{neld_ovd}
\end{equation}
It has generator $\L_\eta = \Lr + \eta\Lp$, where $\Lp = F^T\nabla$. Recall that $F$ is assumed to be nongradient, so that in general there is no explicit expression for the invariant probability measure of \eqref{neld_ovd} when such a measure exists.

\begin{remark}
One of the only cases where a closed form for $\psi_\eta$ is known is the one-dimensional case for overdamped Langevin dynamics, where the associated Fokker--Planck equation is directly solvable, as discussed for instance in \cite{fathi2018}, which generalizes the computations of \cite[Section 2.5]{Reimann_2002}.
\end{remark}

\paragraph{Conditions for assumptions to be satisfied}
We now discuss the conditions under which Assumptions \ref{assumption1} through \ref{assumption4} are satisfied for the perturbed overdamped Langevin dynamics \eqref{neld_ovd}, which correspond to standard results in the literature.

\begin{itemize}[itemsep=0.75em, topsep=0.75em]
	\item If the space $\mathcal{X}$ is compact, then it is trivial to satisfy the Lyapunov condition~\eqref{est_lyapunov} by choosing $\mathcal{K}_n = \ind$. In the case of unbounded spaces, a typical choice is~$\mathcal{K}_n = 1 + |q|^n$, with the condition that there exist $A>0$ and $B\in\R$ such that
	\begin{equation}
  		q^T\nabla V(q) \geq A|q|^2 - B.
  		\label{lyap_ovd}
	\end{equation}
	Condition \eqref{lyap_ovd} is satisfied for potentials $V(q)$ that behave as $|q|^k$ for~$k\geq 2$ at infinity. This ensures that the dynamics returns to some compact region around the origin.
	 
	\item Assumption \ref{assumption1} can be shown to hold using Lyapunov techniques and a minorization condition \cite{mattingly,luc, kopec_ovd}. It can thus be shown that the dynamics~\eqref{neld_ovd} has a unique invariant probability measure with positive density with respect to the Lebesgue measure, and therefore that trajectorial ergodicity holds.
	
	\item Assumptions \ref{assumption2} and \ref{assumption3} hold under some conditions on the potential $V$, which include \eqref{lyap_ovd}; see \cite{kopec_ovd} for a precise discussion.
	 
	\item Lastly, Assumption \ref{assumption4} is trivially satisfied, as $\Lp = F^T\nabla_q$ leaves $\S$ stable when $F \in \S$, and $\Lp^*\S \subset \S_0$ in view of \eqref{ass4_int}.
\end{itemize}

\subsubsection{Langevin dynamics}
\label{subsubsec:lang_review}
Another dynamics of interest used in molecular dynamics is Langevin dynamics, which can be seen as a Hamiltonian dynamics perturbed by an Ornstein--Uhlenbeck process on the momenta. Mathematically, it corresponds to an SDE with degenerate noise, as the noise acts on the momenta only (\emph{i.e.} the diffusion matrix does not have full rank).

\paragraph{Reference dynamics} At equilibrium, Langevin dynamics evolves positions $q$ and momenta $p$ according to the SDE
\begin{align}
\begin{split}
\begin{cases}
    dq_t = M^{-1} p_t \, dt, \\
    dp_t = -\nabla V(q_t) \, dt - \gamma M^{-1} p_t \, dt + \sqrt{\dfrac{2\gamma}{\beta}} \, dW_t,
    \label{lang_ref}
\end{cases}
\end{split}
\end{align}
where $\gamma>0$ is the friction coefficient and $M \in \R^{d\times d}$ is a positive definite matrix, called the mass matrix. The state-space $\mathcal{X}$ is either $\T^d \times \R^d$ or the full space $\R^d \times \R^d$.

The infinitesimal generator associated with \eqref{lang_ref} is the following degenerate elliptic operator
\begin{equation}
    \Lr = \L_\text{ham} + \gamma \L_\text{FD},
\end{equation}
where $\L_\text{ham}$ is the generator of the Hamiltonian part of the dynamics, and $\L_\text{FD}$ is the generator of the fluctuation-dissipation part, \emph{i.e.} the Gaussian process on the momenta, also known as the Ornstein--Uhlenbeck process:
\begin{equation}
	\L_\text{ham} = p^T M^{-1} \nabla_q - \nabla V^T\nabla_p, \qquad \L_\text{FD} = -p^T M^{-1} \nabla_p + \beta^{-1} \Delta_p.
	\label{lang_gen_split}
\end{equation}
The $L^2(\mathcal{X})$-adjoint of $\Lr$ acts as
\begin{align}
    \Lr^\dagger\psi = -\L_\mathrm{ham}\psi + \gamma\div_p\left(M^{-1} p \psi + \beta^{-1} \nabla_p\psi\right).
    \label{lang_FP_L2}
\end{align}
A simple computation shows that the dynamics \eqref{lang_ref} admits the Boltzmann--Gibbs distribution as an invariant probability measure, which is in fact unique. The density of this measure satisfies the stationary Fokker--Planck equation $\Lr^\dagger\psi_0 = 0$, where
\begin{equation}
    \psi_0(q) = \frac{1}{Z}\e^{-\beta H(q,p)}, \qquad Z = \int_\mathcal{X}\e^{-\beta H(q,p)} dq \, dp.
    \label{gibbs_lang}
\end{equation}
Overdamped Langevin dynamics can be obtained from Langevin dynamics in two limiting cases: in the high friction limit $\gamma \to +\infty$, upon rescaling time as $\gamma t$; or in the small mass limit $m\to 0$, as discussed in \cite{nelson} and \cite[Section 2.2.4]{freeEnergy}.

\paragraph{Nonequilibrium perturbation} We also consider the case where the dynamics \eqref{lang_ref} is perturbed by a nongradient force $F \colon \mathcal{X} \to \R^d$ of magnitude $\eta\in \R$. The resulting nonequilibrium Langevin dynamics reads
\begin{align}
\begin{split}
\begin{cases}
    dq_t^\eta = M^{-1} p_t^\eta \, dt, \\
    dp_t^\eta = \left(-\nabla V(q_t^\eta) + \eta F(q_t^\eta)\right) dt - \gamma M^{-1} p_t^\eta \, dt + \sqrt{\dfrac{2\gamma}{\beta}} \, dW_t.
    \label{lang_noneq}
\end{cases}
\end{split}
\end{align}
It has generator $\L_\eta = \L_0 + \eta\Lp$ with $\Lp = F^T\nabla_p$. 

\paragraph{Conditions for assumptions to be satisfied}
We now discuss the conditions under which Assumptions \ref{assumption1} through \ref{assumption4} are satisfied for the perturbed Langevin dynamics~\eqref{lang_noneq}, which correspond here as well to standard results in the literature.

\begin{itemize}[itemsep=0.75em, topsep=0.75em]
	\item For Langevin dynamics, the momentum space is always unbounded, so the choice for the Lyapunov function to satisfy the condition \eqref{est_lyapunov} depends on the position space. For compact position spaces, it suffices to choose~$\mathcal{K}_n(p) = 1 + |p|^n$. For unbounded position spaces, there are various possible choices (see \cite{mattingly,wu2002,talay}). One possibility is to consider 
	\begin{equation}
  		\mathcal{K}_n(q, p) = \left(1 + H(q, p) - V_- + \frac{\gamma}{2} p^T M^{-1}q + \frac{\gamma^2}{4}q^TM^{-1}q\right)^n,
	\end{equation}
	under the following conditions: the potential energy function $V$ is bounded from below by~$V_->-\infty$, and there exist $A,B>0$ and $C\in\R$ such that
	\begin{equation}
		q^TM^{-1}\nabla V(q) \geq AV(q) + Bq^TM^{-1}q + C.
		\label{lyap_lang}
	\end{equation}
	\item The existence of an invariant probability measure in Assumption \ref{assumption1} can be shown using Lyapunov techniques when a minorization condition holds \cite{mattingly,luc}. It can be shown that the dynamics \eqref{lang_noneq} has a unique invariant measure, as $\psi_0$ has positive density with respect to the Lebesgue measure. Thus, since the generator $\L_\eta$ is hypoelliptic \cite{pde_mono}, it follows that trajectorial ergodicity holds \cite{kliemann}. 
	\item Assumption \ref{assumption2} holds as discussed in \cite{control_variates}.
	\item Assumption \ref{assumption3} holds under some conditions on the potential $V$, which include~\eqref{lyap_lang}; see \cite{kopec_lang} for a precise discussion.
	\item Lastly, Assumption \ref{assumption4} is trivially satisfied, as $\Lp = F^T\nabla_p$ is stable by $\S$, and $\Lp\ind = 0$; recall the discussion around \eqref{ass4_int}.
\end{itemize}

\subsection{Transport coefficients and their numerical approximations}
\label{subsec:response_review}
We discuss in this section how to compute transport coefficients such as the mobility, thermal conductivity and shear viscosity, in the context of linear response theory. We outline in Section \ref{subsubsec:linRes_theory} the theoretical framework of linear response theory, discuss regularity and well-posedness conditions, and state some technical results. We then discuss standard numerical techniques for computing transport coefficients in Section \ref{subsubsec:num_techniques}. We finally discuss the paradigmatic example of mobility in Section \ref{subsubsec:mobility}, which is the running example we use to theoretically illustrate our framework.

\subsubsection{Linear response theory}
\label{subsubsec:linRes_theory}
The linear response $\rho_1$ of a given observable~$R$ is defined as the proportionality constant between the average response $\E_\eta(R)$ and the magnitude of the perturbation $\eta$ in the limit $\eta\to 0$, provided this limit makes sense (see Lemma \ref{lemma1} below for a functional framework ensuring that this is the case):
\begin{equation}
    \rho_1 = \lim _{\eta \to 0} \frac{1}\eta\left(\int_\mathcal{X} R \, \psi_\eta - \int_\mathcal{X} R  \, \psi_0\right) = \lim _{\eta \to 0} \frac{\E_\eta(R) - \E_0(R)}{\eta}.
        \label{linRes}
\end{equation}
We typically consider observables for which $\E_0(R) = 0$. Linear response characterizes, in powers of $\eta$, the modification of $\psi_\eta$ with respect to the canonical measure. It is expected that $\psi_\eta$ is a modification of order $\eta$ of $\psi_0$, with dominant order $\eta$ for $\eta$ small. Rigorously proving this statement requires some regularity results. Let us first motivate the form of $\psi_\eta$ for $\eta$ small. We rewrite this function as $\psi_\eta = f_\eta \psi_0$, with $f_\eta$ a perturbation of the constant function $\ind$:
\begin{equation}
    f_\eta = \ind + \eta\f_1 + \eta^2 \f_2 + \cdots.
    \label{feta}
\end{equation}
In this context, the Fokker--Planck equation $(\L_0 + \eta\Lp)^\dagger \psi_\eta = 0$ can be reformulated, by the definition of the $L^2(\psi_0)$ adjoint, as
\begin{equation}
    \left(\L_0 + \eta\Lp\right)^*f_\eta = 0.
    \label{FPstar}
\end{equation}
By identifying terms with the same powers of $\eta$ in \eqref{FPstar}, we obtain, for terms of order~$\eta$, the following Poisson equation:
\begin{equation}
    \L_0^*\f_1 = -\Lp^*\ind.
    \label{poissonf1}
\end{equation}
The solution $\f_1$ to \eqref{poissonf1} is well-defined in view of Assumptions \ref{assumption3} and \ref{assumption4}. Using the conjugate response function $S$ defined in \eqref{conjugate_response}, we can then write 
\begin{equation}
	\f_1 = -(\Lr^{-1})^*S.
	\label{f1}
\end{equation}
Similarly, higher order terms are obtained by identifying terms of order $\eta^k$ in \eqref{FPstar}, leading to the recursive definition
\begin{equation}
    \forall k \geq 1, \qquad \f_{k+1} = (-\L_0^{-1})^* \Lp^*\f_k.
    \label{eq6}
\end{equation}
It can be shown inductively that the expression \eqref{eq6} is also well-defined, as $\Lp^*$ and $(\Lr^{-1})^*$ stabilize~$\S$. 

In particular, the expansion \eqref{feta} allows us to write the linear response \eqref{linRes} in terms of the first-order perturbation of the invariant measure of the reference dynamics when~$\E_0(R) = 0$:
\begin{equation}
 	\rho_1 = \lim_{\eta\to0} \frac{\E_\eta(R)}{\eta} = \int_\mathcal{X} R \f_1 \, \psi_0.
 	\label{linResf1}
\end{equation}
In order to rigorously prove that $\psi_\eta$ is of the form $f_\eta\psi_0$ with $f_\eta$ as above, we consider Lemma \ref{lemma1}, which generalizes \cite[Remark 5.5]{pde_mono} to expansions of arbitrary order $k$.
\begin{lemma}
\label{lemma1}
Suppose that Assumptions \ref{assumption1} through \ref{assumption4} hold true. Fix $\eta_{*}>0$ and~$\varphi \in \S$. For any $k\geq 2$, there exists $M \in \R_+$ (which depends on $\eta_{*}$, $k$ and $\varphi$) such that
\begin{equation}
    \int_{\mathcal{X}} \varphi \, \psi_\eta = \int_\mathcal{X} \varphi\left(1+\eta \f_1 + \cdots + \eta^{k-1}\f_{k-1}\right) \psi_0 + \eta^k \mathscr{R}_{\eta, \varphi,k},
    \label{lemma1_statement}
\end{equation}
with $\left|\mathscr{R}_{\eta, \varphi,k}\right| \leqslant M$ for all $\eta \in\left[-\eta_{*}, \eta_{*}\right]$.
\end{lemma}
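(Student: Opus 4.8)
The plan is to prove the expansion by induction on $k$, using the resolvent bounds from Assumption~\ref{assumption2} to control the remainder. First I would set up the algebraic identity that governs the error. Writing $\psi_\eta = f_\eta\psi_0$, the Fokker--Planck equation \eqref{FPstar} reads $(\L_0+\eta\Lp)^*f_\eta = 0$. Define the truncated approximant $g_\eta^{(k)} = \ind + \eta\f_1 + \cdots + \eta^{k-1}\f_{k-1}$ and the remainder $r_\eta^{(k)} = f_\eta - g_\eta^{(k)}$. Applying $(\L_0+\eta\Lp)^*$ to $g_\eta^{(k)}$ and using the recursion \eqref{poissonf1}--\eqref{eq6} (namely $\L_0^*\f_1 = -\Lp^*\ind$ and $\L_0^*\f_{j+1} = -\Lp^*\f_j$), all terms of order $\eta^0$ through $\eta^{k-1}$ cancel, leaving $(\L_0+\eta\Lp)^* g_\eta^{(k)} = \eta^k \Lp^*\f_{k-1}$. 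Subtracting from $(\L_0+\eta\Lp)^* f_\eta = 0$ gives the key equation
\begin{equation}
  (\L_0+\eta\Lp)^* r_\eta^{(k)} = -\eta^k \Lp^*\f_{k-1}.
\end{equation}
Since $r_\eta^{(k)}$ has zero average against $\psi_0$ (both $f_\eta$ and $g_\eta^{(k)}$ integrate to $1$, using $\int \f_j\psi_0 = 0$, which itself follows from $\f_j \in \mathrm{Ran}\,\L_0^*$), and the right-hand side lies in $\S_0$ by Assumption~\ref{assumption4}, I can invert $-(\L_0+\eta\Lp)^*$ on the appropriate weighted space to write $r_\eta^{(k)} = \eta^k\big(-(\L_0+\eta\Lp)^{-1}\big)^*\Lp^*\f_{k-1}$.

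Next I would convert this into the statement about observables. Testing against $\varphi \in \S$ and using the $L^2(\psi_0)$ duality \eqref{L2star_adj},
\begin{equation}
  \int_\mathcal{X}\varphi\,\psi_\eta - \int_\mathcal{X}\varphi\, g_\eta^{(k)}\psi_0 = \int_\mathcal{X} \varphi\, r_\eta^{(k)}\,\psi_0 = \eta^k \int_\mathcal{X}\Big(-\L_\eta^{-1}\Pi_\eta\varphi\Big)\,\Lp^*\f_{k-1}\,\psi_0,
\end{equation}
where moving the adjoint back onto $\varphi$ produces $\L_\eta^{-1}$ acting on $\varphi$ (and the projection $\Pi_\eta$ appears because $r_\eta^{(k)}$ is $\psi_0$-mean-zero, so only the $\psi_\eta$-mean-zero — equivalently, up to lower order, the $\psi_0$-mean-zero — part of $\varphi$ contributes; this needs a small amount of care, since the natural projection for inverting $\L_\eta$ is $\Pi_\eta$, and one reconciles $\Pi_0$ vs $\Pi_\eta$ at the cost of an $\bigO(\eta)$ term absorbed into the remainder at the next order, or more cleanly one works directly with $\Pi_\eta$ throughout and notes $\int r_\eta^{(k)}\psi_0 = 0$ forces the constant to vanish). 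Then I set $\mathscr{R}_{\eta,\varphi,k} := \int_\mathcal{X}\varphi\,r_\eta^{(k)}\psi_0 / \eta^k$ and bound it: pick $n$ with $\varphi \in B_n^\infty$ and $\Lp^*\f_{k-1}\in B_{n'}^\infty$ (both possible since $\varphi,\f_{k-1}\in\S$ and $\Lp^*$ stabilizes $\S$), use \eqref{est3} to bound $\|{-}\L_\eta^{-1}\Pi_\eta\varphi\|_{B_n^\infty} \leq K_{n,\eta_*}\|\varphi\|_{B_n^\infty}$, hence $|{-}\L_\eta^{-1}\Pi_\eta\varphi| \leq K_{n,\eta_*}\|\varphi\|_{B_n^\infty}\mathcal{K}_n$, and combine with $|\Lp^*\f_{k-1}|\leq\|\Lp^*\f_{k-1}\|_{B_{n'}^\infty}\mathcal{K}_{n'}$; the product $\mathcal{K}_n\mathcal{K}_{n'}$ lies in some $B_m^\infty$ by the last clause of Assumption~\ref{assumption2}, and then $\int \mathcal{K}_m\,\psi_\eta \leq \|\mathcal{K}_m\|_{L^2(\psi_\eta)}\leq C_{m,\eta_*}$ by \eqref{est1}. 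This gives a bound $M$ depending only on $\eta_*$, $k$, $\varphi$, as claimed.

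The induction itself is light: the base case $k=2$ is exactly the displayed manipulation with $g_\eta^{(2)} = \ind + \eta\f_1$; the inductive step is the same computation with $\f_{k-1}$ replaced by $\f_k$, since the recursion \eqref{eq6} is precisely what makes the next cancellation work, and the well-posedness of all $\f_j$ in $\S$ (noted after \eqref{eq6}) is what lets the bounds go through uniformly. The main obstacle is the bookkeeping around which projection ($\Pi_0$ or $\Pi_\eta$) and which resolvent to use when passing the adjoint: one must check that $r_\eta^{(k)} \in \Pi_\eta B_m^\infty$ so that \eqref{est3} applies, which holds because $r_\eta^{(k)}$ is $\psi_0$-mean-zero and differs from being $\psi_\eta$-mean-zero by an $\bigO(\eta^k)$ quantity that can be reabsorbed — or, more simply, because $(\L_0+\eta\Lp)^* r_\eta^{(k)} \in \S_0$ means its preimage is determined up to an additive constant, and the mean-zero constraint fixes that constant. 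Everything else is a routine application of the weighted-space estimates already assumed.
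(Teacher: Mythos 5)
Your formal derivation is the right starting point (it is the same cancellation the paper exploits), but the proof has a genuine gap exactly at its central analytic step: the passage from $(\L_0+\eta\Lp)^* r_\eta^{(k)} = -\eta^k \Lp^*\f_{k-1}$ to $\int_\mathcal{X}\varphi\, r_\eta^{(k)}\psi_0 = \eta^k\int_\mathcal{X}\bigl(-\L_\eta^{-1}\Pi_\eta\varphi\bigr)\Lp^*\f_{k-1}\,\psi_0$. Whichever way you phrase it, this step uses the \emph{exact} resolvent of the perturbed generator: either you invert $\L_\eta^*$ on the density side, which is not granted by any assumption (Assumption~\ref{assumption3} gives invertibility of $\L_0^*$ on $\S_0$ only, i.e.\ at $\eta=0$, and \eqref{est3} concerns $\L_\eta$, not its $L^2(\psi_0)$-adjoint, and says nothing about uniqueness of mean-zero solutions to the adjoint Poisson equation in the class containing $r_\eta^{(k)}$); or you test against $\phi=-\L_\eta^{-1}\Pi_\eta\varphi$ and integrate by parts via \eqref{L2star_adj}. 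The latter is the move the paper explicitly rules out: \eqref{L2star_adj} is an identity for nice test functions, and transferring $\L_\eta$ onto $r_\eta^{(k)}$ requires control of the derivatives (up to second order, with integrable growth) of $\L_\eta^{-1}\Pi_\eta\varphi$. The only information available on this function is the weighted sup bound \eqref{est3} on $B_n^\infty$; nothing in Assumptions~\ref{assumption1}--\ref{assumption4} controls its derivatives or places it in $\S$, so the integration by parts (and hence your key identity) is not justified. Your closing remark about ``bookkeeping around $\Pi_0$ vs $\Pi_\eta$'' is a red herring --- that part is harmless since $\int r_\eta^{(k)}\psi_0=0$ kills the constant; the real obstruction is regularity of the exact resolvent.

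This is precisely why the paper's proof does not use $\L_\eta^{-1}$ at all, but instead introduces the truncated Neumann-series pseudo-inverse $Q_{\eta,k}$ in \eqref{pseudo_inverse}, built only from $\Pi_0\Lr^{-1}\Pi_0$ and $\Lp$, which map $\S\to\S_0$ by Assumptions~\ref{assumption3} and~\ref{assumption4}; all integrations are then between functions of $\S$, and the extra $\bigO(\eta^k)$ defect $\L_\eta Q_{\eta,k}-\Pi_0$ is absorbed into the remainder and bounded with \eqref{est1}. To repair your argument you would either have to add an assumption that $\L_\eta^{-1}\Pi_\eta$ maps $\S$ into $\S$ uniformly in $\eta$ (or that $(\L_\eta^*)^{-1}$ is bounded on suitable weighted spaces), or replace $\L_\eta^{-1}\Pi_\eta\varphi$ by such an approximate inverse as the paper does. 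A final minor slip: your last bound integrates $\mathcal{K}_m$ against $\psi_\eta$ via \eqref{est1}, but the integral you are bounding is against $\psi_0$ (this is easily fixed, since \eqref{est1} applies at $\eta=0$); also the induction scaffolding is unnecessary --- once the pseudo-inverse (or a justified exact inverse) is in place, the argument is the same for every $k$.
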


\begin{remark}
	This expansion can be given a meaning as a converging infinite expansion at the level of operators when $\Lp$ is $\Lr$-bounded, and $\Lr$ and its inverse are restricted to $\Pi_0L^2(\psi_0)$ (see \cite[Theorem 5.2]{pde_mono}). However, the perturbation we consider in Section \ref{sec:extending} will be quite general and it cannot be assumed that the perturbation operator is $\Lr$-bounded.
\end{remark}

\begin{proof}
It is sufficient to prove the result for $\varphi\in\S_0$, as we can replace $\varphi$ by $\varphi+C$ for some constant $C$, since $\f_k$ has average 0 with respect to $\psi_0$ for any $k$. Using the definition of $\f_k$, a straightforward computation gives, for $\phi \in \S$,
\begin{equation}
    \int_\mathcal{X} (\mathcal{L}_\eta\phi)\left(1 + \eta \mathfrak{f}_1 + \cdots + \eta^{k-1}\f_{k-1}\right) \psi_0 = \eta^k \int_\mathcal{X}\left(\Lp \phi\right) \mathfrak{f}_{k-1} \psi_0.
\end{equation}
Note that all functions which appear in the above integrals are in $\S$, so that their integrals with respect to $\psi_0$ are well defined. One would like at this stage to replace~$\phi$ by~$\L_\eta^{-1}\Pi_\eta\varphi$, which would already give the result. However, this would require controlling the integrability of derivatives of~$\L_\eta^{-1}\Pi_\eta\varphi$, which does not follow from our assumptions. We therefore consider an operator $Q_{\eta,k}$, defined on~$\S_0$, which approximates $\L_\eta^{-1}$ in some sense. Replacing $\phi$ by $Q_{\eta,k}\varphi$ leads to
\begin{equation}
\begin{aligned}
    \int_\mathcal{X} \mathcal{L}_\eta Q_{\eta,k} \varphi \, \psi_\eta = 0 =& \int_\mathcal{X} \mathcal{L}_\eta Q_{\eta,k} \varphi\left(1 + \eta \mathfrak{f}_1 + \cdots + \eta^{k-1}\f_{k-1}\right) \, \psi_0 \\
   & \ \ \ - \eta^k \int_\mathcal{X}\left(\Lp Q_{\eta,k}\varphi\right) \mathfrak{f}_{k-1} \, \psi_{0}.
    \label{lemma1_mid}
\end{aligned}
\end{equation}
In order to construct $Q_{\eta,k}$, we start from the following \emph{formal} identity:
\begin{align}
	\left(\Lr + \eta\Lp\right)^{-1} &= \Lr^{-1}\left(1+\eta \Lp\Lr^{-1}\right)^{-1} \\
	&= \Lr^{-1}\left(1-\eta\Lp\Lr^{-1} + \cdots + (-\eta)^k (\Lp\Lr^{-1})^k + \cdots \right).
\end{align}
The previous expansion suggests to introduce an approximate inverse operator, obtained by truncating the formal infinite expansion at order $\bigO(\eta^k)$. It is moreover sufficient to construct a pseudo-inverse on $\S_0$, which amounts to restricting all the operators using $\Pi_0$ on the left and on the right. We therefore introduce
\begin{equation}
	Q_{\eta,k} = \Pi_0\Lr^{-1}\Pi_0 + \sum_{n=1}^{k-1} (-\eta)^n \Pi_0\Lr^{-1}\Pi_0\left(\Lp\Pi_0\Lr^{-1}\Pi_0\right)^n.
	\label{pseudo_inverse}
\end{equation}
This operator is well-defined by Assumptions \ref{assumption3} and \ref{assumption4}, as it consists of finite compositions of operators leaving $\S$ invariant, so that $Q_{\eta,k} \colon \S\to \S_0$. Note that, by construction,
\begin{equation}
	\L_\eta Q_{\eta,k} = \Pi_0 + (-1)^{k-1}\eta^k\left(\Lp\Pi_0\Lr^{-1}\Pi_0\right)^k.
\end{equation}
Equation \eqref{lemma1_mid} then becomes
\begin{equation}
    \int_\mathcal{X} (\Pi_0 \varphi) \, \psi_\eta = \int_\mathcal{X} \Pi_0 \varphi\left(1 + \eta \mathfrak{f}_1 + \cdots + \eta^{k-1}\f_{k-1}\right) \psi_0 + \eta^k \mathscr{R}_{\eta, \varphi, k},
    \label{lemma1_midResult}
\end{equation}
where the remainder is given by
\begin{equation}
\begin{aligned}
    \mathscr{R}_{\eta, \varphi,k} &= (-1)^{k}\int_\mathcal{X}\left(\Lp \Pi_0 \mathcal{L}_0^{-1} \Pi_0\right)^k \varphi \, \psi_\eta \\ 
    & +\int_\mathcal{X}\left((-1)^{k-1}\left[\left(\Lp \Pi_0 \mathcal{L}_0^{-1} \Pi_0\right)^{k} \varphi\right](1 + \cdots + \eta^{k-1}\f_{k-1})-\left(\Lp Q_{\eta,k} \varphi\right) \mathfrak{f}_{k-1}\right) \psi_0.
\end{aligned}
\label{remainder}
\end{equation}
Equation \eqref{lemma1_midResult} is the desired result \eqref{lemma1_statement}, since $\Pi_0\varphi = \varphi$ for $\varphi \in \S_0$. 

It remains at this stage to show that the remainder term $\mathscr{R}_{\eta,\varphi,k}$ is uniformly bounded. We introduce, for notational convenience, the operator $\mathcal{A} = \Lp \Pi_{0} \mathcal{L}_{0}^{-1} \Pi_{0}$. Since $\varphi\in \S$, it holds that $\mathcal{A}^k\varphi \in \S$ by Assumptions \ref{assumption3} and \ref{assumption4}. By Assumption \ref{assumption2}, there exists $m_k \in \N$ such that
\begin{equation}
\begin{aligned}
    \forall|\eta| \leq \eta_*, \qquad \left|\int_\mathcal{X}\mathcal{A}^{k-1} \varphi \, \psi_\eta\right|
    &\leq \left\|\mathcal{A}^{k-1}\varphi\right\|_{B^\infty_{m_k}}\int_\mathcal{X} \mathcal{K}_{m_k} \, \psi_\eta \\
    &\leq \left\|\mathcal{A}^{k-1}\varphi\right\|_{B^\infty_{m_k}}\|\mathcal{K}_{m_k}\|_{L^2(\psi_\eta)}.
    \label{lemma1_A_bound}
\end{aligned}
\end{equation}
The latter quantity is uniformly bounded in $\eta$ in view of \eqref{est1}, with a bound depending only on $\eta_*, \varphi$ and $k$.

 A similar uniform bound can be found for the second integral in \eqref{remainder}, since all the functions which appear in the integral belong to some space $B^\infty_{m_k}$, upon possibly increasing $m_k$. Indeed, since~$\f_i \in \S$ for any $1\leq i\leq k-1$, there exist $m_{k'}\geq 1$ such that
\begin{equation}
    |\eta\f_1 + \cdots + \eta^{k-1}\f_{k-1}| \leq \left(|\eta|\|\f_1\|_{B^\infty_{m_{k'}}} + \cdots |\eta|^{k-1}\|\f_{k-1}\|_{B^\infty_{m_{k'}}}\right)\mathcal{K}_{m_{k'}},
\end{equation}
where the prefactor of $\mathcal{K}_{m_{k'}}$ is uniformly bounded for $|\eta|\leq \eta_*$, so the second term in~$\eqref{remainder}$ is uniformly bounded in $\eta$ by some constant depending only on $\eta_*$, $\varphi$ and $k$. Lastly, since $Q_{\eta,k} \in \S$, then $\Lp Q_{\eta,k}\varphi \in \S$, so there exists $m_{k''}\geq 1$ such that
\begin{equation}
    \left|\int_\mathcal{X} (\Lp Q_{\eta,k}\varphi)\f_1 \, \psi_0\right| \leq \left\|\Lp Q_{\eta,k}\varphi\right\|_{B^\infty_{m_{k''}}}\|\f_1\|_{B^\infty_{m_{k''}}} \int_\mathcal{X} \mathcal{K}_{m_{k''}}^2 \, \psi_\eta,
    \label{lemma1_bound}
\end{equation}
where the three terms on the right-hand side of \eqref{lemma1_bound} are uniformly bounded for~$|\eta|\leq \eta_*$ by Assumptions \ref{assumption2} and~\ref{assumption3}. This allows us to obtain the desired result.
\end{proof}

\paragraph{Nonlinear response} Linear response is only valid up to certain values of $\eta$, after which the nonlinear part of the response becomes too large. To study the crossover, one needs to consider higher order terms of the response. As for the invariant measure~$\psi_\eta$ of the perturbed dynamics, the response can be expanded as a polynomial in~$\eta$ in view of \eqref{lemma1_statement}:
\begin{equation}
	\E_\eta(R) = \int_\mathcal{X} R \, \psi_\eta = \eta\rho_1 + \eta^2\rho_2 + \eta^3\rho_3 +  \cdots.
	\label{rhoeta}
\end{equation}
This allows us to define the $k$th-order response, denoted by $\rho_k$, characterized inductively for $k\geq 2$ as
\begin{equation}
    \rho_k = \lim_{\eta\to 0} \frac{|\E_\eta(R) - (\eta\rho_1 + \eta^2\rho_2 + \cdots + \eta^{k-1}\rho_{k-1})|}{\eta^k} = \int_\mathcal{X} R \f_k \, \psi_0.
    \label{rhon_response}
\end{equation}
The purpose and usefulness of writing higher order terms of the response will be made clear in Section \ref{sec:extending}, when we attempt at reducing the contributions $\rho_k$ for $k\geq 2$.

\subsubsection{Numerical techniques to estimate transport coefficients}
\label{subsubsec:num_techniques}
In this section, we discuss standard numerical techniques for computing transport coefficients. We first reformulate the linear response presented in Section \ref{subsubsec:linRes_theory} as an integrated correlation, through the celebrated Green--Kubo formula, and then outline the numerical difficulties associated with the estimation of transport coefficients, in particular the large statistical error of the associated estimators. 

\paragraph{Reformulating the linear response as an integrated correlation} A useful corollary of \eqref{linResf1} and \eqref{f1} is that we can reformulate the definition of the linear response~\eqref{linRes} as an integrated correlation function, called the Green--Kubo formula. To define it, we first consider the following operator identity
\begin{equation}
	\Lr^{-1} = -\int_0^{+\infty} \e^{t\Lr} \, dt
	\label{operator_identity}
\end{equation}
on the Hilbert space
\begin{equation}
	L_0^2(\psi_0) = \Pi_0L^2(\psi_0) = \left\{\varphi\in L^2(\psi_0) \, \middle| \, \int_\mathcal{X} \varphi \, \psi_0 = 0\right\}.
\end{equation}
The identity \eqref{operator_identity} holds for underdamped and overdamped Langevin under certain conditions discussed in \cite[Section 2]{pde_mono}. In view of this identity, as well as \eqref{linRes}, \eqref{linResf1} and \eqref{f1}, and the definition \eqref{conjugate_response} of the conjugated response $S$, we can write
\begin{align}
	\rho_1 &= \lim_{\eta\to0}\frac{\E_\eta(R)}{\eta} = \int_\mathcal{X} R\f_1 \, \psi_0 = -\int_\mathcal{X} (\Lr^{-1} R)(\Lp^*\ind) \, \psi_0 \\
	&= \int_0^{+\infty} \E_0(R(X_t) S(X_0)) \, dt,
	\label{gk}
\end{align}
where the expectation $\E_0$ is over all initial conditions $X_0$ distributed according to the invariant measure $\psi_0$, and over all realizations of the reference dynamics \eqref{general_SDE} at hand. For Langevin dynamics \eqref{lang_noneq}, the conjugate response function reads
\begin{equation}
	S(q,p) = \beta F(q)^TM^{-1}p.
	\label{conj_res_lang}
\end{equation}
Similarly, for overdamped Langevin dynamics \eqref{neld_ovd}, it reads
\begin{equation}
  S(q) = \beta F(q)^T\nabla V(q).
  \label{conj_res_ovd}
\end{equation}
In both cases $S \in L_0^2(\psi_0)$. We emphasize that, as already discussed in Section \ref{subsec:ref_dyn}, the expression for $S$ comes from the generator $\Lp$ of the perturbation, and not the observable $R$.

 The Green--Kubo formula shows that a nonequilibrium property (the transport coefficient $\rho_1$ in this case) can be computed using simulations at equilibrium, \emph{i.e.} for~$\eta=0$.

\paragraph{Standard numerical techniques} Transport coefficients are often estimated in one of two ways:

\begin{enumerate}
    \item \emph{Equilibrium techniques based on the Green--Kubo formula} \eqref{gk}. To numerically estimate the quantity \eqref{gk}, one needs to discretize the continuous dynamics in time, using a fixed timestep $\Delta t>0$. This leads to the presence of some timestep bias of order $\bigO(\Delta t^\theta)$, where~$\theta$ depends on the numerical method at hand \cite{averages,pde_mono}. Additionally, the time integral must be truncated to some finite integration time $T$. This leads to some truncature bias, which is small due to the exponential convergence of $\e^{t\L_0}$. 
    	
    	Last but not least, the expectation is computed using empirical averages over~$K$ realizations. This naturally suggests the following estimator for $\rho_1$:
    	\begin{equation}
  			\widehat{\rho}_{K,T} = \frac{1}{K} \sum_{k=1}^K \int_0^T R(X_t^k)S(X_0^k) \, dt.
  			\label{GKestimator}
		\end{equation}
		Although there are clear advantages to using equilibrium techniques (for instance, correlation functions for different conjugate responses can be computed simultaneously), there is also one major challenge in using \eqref{gk}. The integral is a correlation term, which is a small quantity for large~$t$, plagued by a large statistical error \cite{gk1}. The statistical error is therefore the main source of error, as the variance is expected to scale linearly with the integration time~$T$. More precisely, it is shown in \cite[Section 5]{sensitivity} that the statistical error of~$\widehat{\rho}_{K,T}$ is of order~$T/K$.
		
		Overall, a tradeoff has to be considered for the choice of $T$ as the bias is smaller for larger $T$ while the variance increases with $T$.
    \item \emph{Nonequilibrium steady-state techniques}. This method works by first approximating the limit in \eqref{linRes} by the finite difference $\E_{\eta}(R)/\eta$, with $\eta$ sufficiently small to limit the bias; and next estimating the expectation with time averages as
    \begin{equation}
        \widehat{\Phi}_{\eta, t}=\frac{1}{\eta t} \int_0^t R(X_s^{\eta}) \, ds,
        \label{estimator}
    \end{equation}
    for response functions with average 0 with respect to $\psi_0$. When computing such steady-state averages over long trajectories, the asymptotic variance of the trajectory average computed using the discretized dynamics coincides at dominant order in $\eta$ with the asymptotic variance of the trajectory average computed using the corresponding continuous dynamics (see Proposition \ref{proposition1} below).
    
     One source of error is the systematic error due to three different biases. As discussed in Proposition~\ref{proposition1}, the finiteness of the integration time leads to some bias of order $1/(\eta t)$, which is typically smaller than the statistical error. Additionally, the fact that we consider $\eta\ne 0$ leads to some bias of order $\eta$, as a consequence of Lemma \ref{lemma1}. Lastly, the time discretization of the continuous dynamics also leads to some time step bias; see~\cite{averages}.
    
     As discussed in Proposition \ref{proposition1} below, the statistical error is dictated by a central limit theorem, so the variance of the estimator $\widehat{\Phi}_{\eta, t}$ scales as $1/(\eta^2 t)$. The simulation time required to estimate $\rho_1$ with a sufficient statistical accuracy therefore scales as $t \sim \eta^{-2}$, leading to very long integration times $t$. Such long simulation times are often prohibitive in practical cases of interest.
     
     These results, and the tradeoffs to be considered, are discussed in more detail in Section \ref{subsec:analysis}.
\end{enumerate}
\subsubsection{Application to mobility}
\label{subsubsec:mobility}
The aim of this section is to illustrate the various previous results in the paradigmatic case of mobility. We consider the case where $F \in \R^d$ is a constant force, and the state-space is $\mathcal{X} = \T^d$. We define the mobility for both overdamped Langevin and Langevin dynamics, and discuss how it is related to the self-diffusion by Einstein's relation. 

From a physical point of view, it is expected that a nonzero constant force in some given direction induces a response from the system. At steady-state, this response is represented by some nonzero flux, due to the fact that the forcing $F$ is not the gradient of a periodic function. This nonzero flux depends both on the perturbation and on the observable $R$ in question.
 
 For the Langevin dynamics \eqref{lang_noneq}, the perturbation $\eta F$ is expected to induce a nonzero velocity in the direction~$F$. The mobility is the proportionality constant between the externally applied force $F$ and the observed average velocity in the direction~$F$. Therefore, it is natural to consider the observable
\begin{equation}
    R(p) = F^TM^{-1}p.
    \label{R_lang}
\end{equation}
This gives us the following expression for the mobility, in view of \eqref{linResf1}:
\begin{equation}
	\rho_1 = \int_{\T^d \times \R^d} (F^TM^{-1}p)\f_1 \, \psi_0.
	\label{mobility} 
\end{equation}
It can be rewritten using the Green--Kubo formula \eqref{gk} and the expression \eqref{conj_res_lang} of the conjugate response as
\begin{equation}
 	\rho_1 = \beta \int_0^{+\infty} \E_0\left[\left(F^TM^{-1}p_t\right)\left(F^TM^{-1}p_0\right)\right] dt.
\end{equation}
From this expression, it is easy to see that the mobility is related to the self-diffusion coefficient $D_F$ as (see for instance \cite{averages})
\begin{equation}
	\rho_1 = \beta D_F,
	\label{einstein_eq}
\end{equation}
where
\begin{equation}
	D_F = \lim_{t\to+\infty} \frac{\E\left[(F^T(Q_t-Q_0))^2\right]}{2t},
\end{equation}
with
\begin{equation}
	Q_t = Q_0 + \int_0^t M^{-1}p_s \, ds
\end{equation}
the unperiodized displacement. The formula \eqref{einstein_eq} for $D_F$ is known as Einstein's relation \cite{einstein}.

For overdamped Langevin dynamics \eqref{eq_ovd}, there is no notion of velocities. The system is however expected to drift in the direction $F$. This can be quantified by how much the gradient part of the force changes in the direction $F$. Thus, it is natural to consider the following observable
\begin{equation}
    R(q) = F^T\nabla V(q).
    \label{R_ovd}
\end{equation}
The mobility is then defined as the average projected force in the direction of the perturbation
\begin{equation}
	\rho_1 = \int_{\T^d} \left(F^T\nabla V\right) \f_1 \, \psi_0.
\end{equation}
For overdamped Langevin, the mobility is related to the self-diffusion $D_F$ through the following equality (see for instance \cite{fathi2015}):
\begin{equation}
    \beta D_F = |F|^2 - \rho_1 = |F|^2 - \beta\int_0^{+\infty} \E_0\left[(F^T\nabla V(q_t))(F^T\nabla V(q_0))\right] dt,
    \label{eq77}
\end{equation}
where the second expression involves the Green-Kubo formula for the linear response of $F^T\nabla V$.

\subsection{Numerical Analysis for NEMD}
\label{subsec:analysis}
In this section, we perform error analysis on the estimator \eqref{estimator}. We obtain bounds on the variance, then on the finite-time integration bias. Without loss of generality, we consider response functions of the form~$R = \Pi_0 R$, \emph{i.e.} functions with zero average with respect to the invariant measure~$\psi_0$ of the reference dynamics. The estimator $\widehat{\Phi}_{\eta, t}$ defined in \eqref{estimator} converges almost surely, as~$t\to\infty$ to
\begin{equation}
	\widehat{\rho}_{1,\eta} = \frac{1}\eta \int_\mathcal{X} R \, \psi_\eta = \rho_1 + \bigO(\eta),
	\label{rhoHat_bias}
\end{equation}
where the last equality comes from Lemma \ref{lemma1}. However, $\widehat{\Phi}_{\eta, t}$ suffers both from a large asymptotic variance, of order $\sigma_{R, 0}^2 / \eta^2$ (with $\sigma_{R,0}^2$ the asymptotic variance for time averages of $R$ computed with the reference dynamics), and a large finite-time sampling bias, of order $1/(\eta t)$. The aim of this section is to make precise the latter two statements.

\paragraph{Bounds on the statistical error} The scaling of the statistical error is quantified in the following result.

\begin{proposition}
\label{proposition1}
Suppose that Assumptions \ref{assumption1} through \ref{assumption4} hold true. Fix $R \in \S_0$ and $\eta \in \R$. Assume that $X_0^\eta \sim \mu_{\rm{init}}$ for some initial probability measure $\mu_{\rm{init}}(dx)$ such that $\mu_{\mathrm{init}}\left(\mathcal{K}_n\right) < +\infty$ for any $n \geqslant 1$. Then the estimator $\widehat{\Phi}_{\eta, t}$ converges almost surely to $\widehat{\rho}_{1,\eta}$ as $t \to +\infty$, and the following central limit theorem holds:
\begin{equation}
	\sqrt{t} \left(\widehat{\Phi}_{\eta,t} - \widehat{\rho}_{1,\eta}\right) \xrightarrow[t\to+\infty]{\rm{law}} \mathcal{N}\left(0,\frac{\sigma^2_{R,\eta}}{\eta^2}\right).
    \label{CLT_prop1}
\end{equation}
Moreoever, there exists $\widetilde{\sigma}_{R, 0}$ such that for any $\eta_* \in (0,+\infty)$, there is $C \in \mathbb{R}_+$ (which depends on $\eta_*$ and $R$) for which
\begin{equation}
    \forall |\eta|\leq \eta_*, \qquad\left|\sigma_{R, \eta}^2 - \sigma_{R, 0}^2 - \eta\widetilde{\sigma}_{R, 0}^2\right| \leqslant C\eta^2.
    \label{eq9_sticky}
\end{equation}
\end{proposition}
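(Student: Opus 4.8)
The plan is to obtain the central limit theorem from the standard theory of ergodic averages for (hypo)elliptic diffusions, and then to Taylor-expand the asymptotic variance $\sigma_{R,\eta}^2$ in $\eta$ using the resolvent formula for the variance together with the perturbative control on $\psi_\eta$ and on $\L_\eta^{-1}$ provided by Assumption~\ref{assumption2} and Lemma~\ref{lemma1}. First I would recall that, under Assumptions~\ref{assumption1}--\ref{assumption2}, the process $X_t^\eta$ is ergodic with invariant measure $\psi_\eta$, and that for $R \in \S_0$ (so in particular $R - \E_\eta(R) \in \Pi_\eta B_n^\infty$ for a suitable $n$) the Poisson equation $-\L_\eta \Phi_\eta = R - \E_\eta(R)$ has a solution $\Phi_\eta = -\L_\eta^{-1}\Pi_\eta R$ lying in $B_n^\infty$, with norm bounded uniformly on $|\eta|\leq\eta_*$ by \eqref{est3}. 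The CLT \eqref{CLT_prop1} for $\frac1t\int_0^t R(X_s^\eta)\,ds$ then follows from the martingale decomposition $\int_0^t (R - \E_\eta(R))(X_s^\eta)\,ds = \Phi_\eta(X_0^\eta) - \Phi_\eta(X_t^\eta) + \int_0^t (\nabla\Phi_\eta)^T\sigma\,dW_s$, the boundary terms being negligible after division by $\sqrt t$ (here the moment assumption $\mu_{\rm init}(\mathcal K_n)<+\infty$ and \eqref{est2} are used to control $\Phi_\eta(X_t^\eta)$), so that $\sqrt t\,(\widehat\Phi_{\eta,t}-\widehat\rho_{1,\eta})$ converges in law to a centered Gaussian with variance $\eta^{-2}\sigma_{R,\eta}^2$, where
\begin{equation}
    \sigma_{R,\eta}^2 = \int_\mathcal{X} \bigl|\sigma^T\nabla\Phi_\eta\bigr|^2 \psi_\eta = -2\int_\mathcal{X} (\Pi_\eta R)\,\bigl(\L_\eta^{-1}\Pi_\eta R\bigr)\,\psi_\eta .
\end{equation}

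Second, with this resolvent representation in hand, the estimate \eqref{eq9_sticky} becomes an assertion about the second-order differentiability of $\eta\mapsto\sigma_{R,\eta}^2$ at $\eta=0$, with a remainder that is uniform on $[-\eta_*,\eta_*]$. I would expand the three $\eta$-dependent ingredients separately: the measure $\psi_\eta = (1+\eta\f_1+\bigO(\eta^2))\psi_0$ by Lemma~\ref{lemma1}; the projection $\Pi_\eta R = R - \E_\eta(R) = R - \eta\rho_1 + \bigO(\eta^2)$ (recall $\E_0(R)=0$) by \eqref{rhoeta}; and the resolvent $\L_\eta^{-1}$ via the pseudo-inverse construction of Lemma~\ref{lemma1}, namely $\Pi_0\L_\eta^{-1}\Pi_0 = \Pi_0\Lr^{-1}\Pi_0 - \eta\,\Pi_0\Lr^{-1}\Pi_0\Lp\Pi_0\Lr^{-1}\Pi_0 + \bigO(\eta^2)$, where the $\bigO(\eta^2)$ term, applied to $R\in\S$, lands in a fixed space $B^\infty_{m}$ with norm uniformly bounded in $\eta$ by Assumptions~\ref{assumption2}--\ref{assumption4}. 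Substituting these three expansions into the bilinear expression for $\sigma_{R,\eta}^2$ and collecting powers of $\eta$ yields $\sigma_{R,\eta}^2 = \sigma_{R,0}^2 + \eta\,\widetilde\sigma_{R,0}^2 + \bigO(\eta^2)$, where $\sigma_{R,0}^2 = -2\int_\mathcal{X} R\,(\Lr^{-1}R)\,\psi_0$ and $\widetilde\sigma_{R,0}^2$ is the explicit sum of the three first-order contributions (one from $\f_1$ in the measure, one from the $-\eta\rho_1$ shift in each factor $\Pi_\eta R$, one from the first-order term of the resolvent). The notation $\widetilde\sigma_{R,0}^2$ in \eqref{eq9_sticky} should not be read as a nonnegative quantity; it is merely a coefficient, which can have either sign.

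The main obstacle, and the point requiring the most care, is making the $\bigO(\eta^2)$ remainder genuinely \emph{uniform} on the whole interval $[-\eta_*,\eta_*]$ rather than merely $o(\eta)$ as $\eta\to0$. This is exactly the issue already confronted in the proof of Lemma~\ref{lemma1}: one cannot simply differentiate $\L_\eta^{-1}$ because that would require controlling derivatives of $\L_\eta^{-1}\Pi_\eta R$, which the assumptions do not provide. The remedy is the same as there --- work with the pseudo-inverse $Q_{\eta,k}$ (here $k=3$ suffices) instead of the true resolvent, so that every operator appearing is a finite composition of operators stabilizing $\S$, and bound each leftover integral by pairing a $B^\infty_m$-norm of a fixed $\S$-function against $\|\mathcal K_m\|_{L^2(\psi_\eta)}$, which is uniformly bounded by \eqref{est1}. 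One must also verify that the products of Lyapunov functions arising from multiplying $\f_1$, $R$, $\Lr^{-1}R$ etc.\ again lie in some $B^\infty_m$, which is guaranteed by the last statement of Assumption~\ref{assumption2}. Once the bookkeeping of which function sits in which $B^\infty_m$ is done, the uniform bound $|\sigma_{R,\eta}^2 - \sigma_{R,0}^2 - \eta\widetilde\sigma_{R,0}^2|\leq C\eta^2$ follows, with $C$ depending only on $\eta_*$ and $R$; finally one checks that the boundary terms in the martingale CLT contribute nothing to the asymptotic variance, so that the identification of $\sigma_{R,\eta}^2$ above is legitimate.
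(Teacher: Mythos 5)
Your overall strategy is the same as the paper's: the resolvent representation of the asymptotic variance (the paper's \eqref{variance}), an expansion of the three $\eta$-dependent ingredients (measure, projection, Poisson solution), and uniform control of remainders via Lyapunov pairings and Lemma~\ref{lemma1}. The genuine gap is precisely at the step you single out as delicate. You propose to treat the resolvent ``exactly as in Lemma~\ref{lemma1}'', i.e.\ to work with the pseudo-inverse $Q_{\eta,k}$ of \eqref{pseudo_inverse} instead of $\L_\eta^{-1}$ and to bound leftovers by pairing $B^\infty_m$-norms of fixed $\S$-functions against $\|\mathcal{K}_m\|_{L^2(\psi_\eta)}$ via \eqref{est1}. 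That mechanism is not sufficient here. In Lemma~\ref{lemma1} the substitution $\phi = Q_{\eta,k}\varphi$ costs nothing because of the exact invariance identity $\int_\mathcal{X}\L_\eta(Q_{\eta,k}\varphi)\,\psi_\eta = 0$, so the true resolvent never has to be compared with its surrogate. In Proposition~\ref{proposition1}, by contrast, $\sigma^2_{R,\eta}$ is \emph{defined} through the true Poisson solution $\widehat{R}_\eta = -\L_\eta^{-1}\Pi_\eta R$, so the operator expansion you write, $\Pi_0\L_\eta^{-1}\Pi_0 = \Pi_0\Lr^{-1}\Pi_0 - \eta\,\Pi_0\Lr^{-1}\Pi_0\Lp\Pi_0\Lr^{-1}\Pi_0 + \bigO(\eta^2)$ with a uniform $\bigO(\eta^2)$ in $B^\infty_m$, is exactly what has to be proved, and it does not follow from $\S$-stability plus \eqref{est1}. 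The missing ingredient is the uniform resolvent bound \eqref{est3}: setting $\widetilde{R} = -\Lr^{-1}\Pi_0\Lp\widehat{R}_0\in\S$, one checks that $-\L_\eta\bigl(\widehat{R}_\eta - \widehat{R}_0 - \eta\widetilde{R}\bigr) = \eta^2\,\Pi_\eta\Lp\widetilde{R}$, and \eqref{est3} (together with \eqref{est1}) then gives $\|\Pi_\eta(\widehat{R}_\eta - \widehat{R}_0 - \eta\widetilde{R})\|_{B^\infty_n}\leq K\eta^2$ uniformly for $|\eta|\leq\eta_*$; this is the paper's Lemma~\ref{lemma2}, and it is the hinge of the whole argument. (Equivalently, one can compare $\widehat{R}_\eta$ with $-Q_{\eta,k}R$, whose difference solves, up to a constant, a Poisson equation with an $\bigO(\eta^k)$ right-hand side, but the uniform control of that difference again comes only from \eqref{est3}.) Once this estimate is in hand, the rest of your plan --- insert $\widehat{R}_0+\eta\widetilde{R}$, which lie in $\S$, apply Lemma~\ref{lemma1} to the resulting integrals, and bound all remainders using \eqref{est1} and the product property of the $\mathcal{K}_n$ --- is exactly the paper's proof of \eqref{eq9_sticky}; note also that the $-\eta\rho_1$ shifts from $\Pi_\eta R$ drop out, since the shifted constants are paired against functions with zero mean, so $\widetilde{\sigma}^2_{R,0}$ has only two first-order contributions (from $\f_1$ and from $\widetilde{R}$), not three.

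A secondary weakness is the CLT step. Your martingale decomposition applies It\^o's formula to $\Phi_\eta = -\L_\eta^{-1}\Pi_\eta R$ and uses the stochastic integral of $(\nabla\Phi_\eta)^T\sigma$, but the assumptions only give $\Phi_\eta\in\Pi_\eta B^\infty_n$; nothing guarantees the pointwise regularity of $\Phi_\eta$ or the integrability of $|\sigma^T\nabla\Phi_\eta|^2$ that this pathwise argument requires. The paper faces the same obstruction in Lemma~\ref{finite_time_bias_lemma} and circumvents it by replacing $\widehat{R}_\eta$ with $Q_{\eta,k+1}\Pi_\eta R\in\S$; for \eqref{CLT_prop1} it instead invokes a cited CLT for ergodic Markov processes, for which solvability of the Poisson equation in $L^2(\psi_\eta)$ --- provided by \eqref{est1} and \eqref{est3} --- suffices, with \eqref{variance} as the standard expression of the asymptotic variance. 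Your identification of $\sigma^2_{R,\eta}$ is correct, but as written its justification inherits this regularity gap.
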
 
This result shows that simulation times of order $t \sim \eta^{-2}$ should be considered in order for the variance of the naive estimator \eqref{estimator} to be of order 1 , and also for its bias to be of order $\eta$, \emph{i.e.} of the same order of magnitude as the bias $\rho_1 - \widehat{\rho}_{1,\eta}$ arising from choosing $\eta\ne 0$. For completeness, the proof of \eqref{eq9_sticky} is done at second-order, in order to determine the expression of the term $\widetilde{\sigma}_{R, 0}^2$ characterizing the first-order variation of $\sigma_{R,\eta}$ with respect to $\eta$. 
\begin{proof}
The central limit theorem \eqref{CLT_prop1} holds by the results of \cite{CLT}, since the Poisson equation $-\mathcal{L}_\eta \widehat{R}_\eta = \Pi_\eta R$ has a unique solution in $\Pi_\eta B^\infty_n \subset L^{2}\left(\psi_\eta\right)$ for some integer~$n\geq 1$ in view of \eqref{est1} and \eqref{est3}. Note that $\Pi_\eta\widehat{R}_\eta = \widehat{R}_\eta$. To prove \eqref{eq9_sticky}, we first write the asymptotic variance as (see for instance~\cite[Section 3]{pde_mono})

\begin{equation}
    \sigma_{R, \eta}^2 = 2 \int_\mathcal{X} R \left(\Pi_\eta \widehat{R}_\eta\right) \psi_\eta .
    \label{variance}
\end{equation}
In view of Lemma \ref{lemma2} below, we introduce $\widetilde{R} = -\mathcal{L}_0^{-1} \Pi_0 \Lp \widehat{R}_0 \in \S$, so that \eqref{variance} can be expanded as
\begin{equation}
    \int_\mathcal{X} R \Pi_\eta\widehat{R}_\eta \, \psi_\eta = \int_\mathcal{X} R \Pi_\eta\widehat{R}_0 \, \psi_\eta + \eta \int_\mathcal{X} R \Pi_\eta\widetilde{R} \, \psi_\eta + \eta^2 \mathcal{R}_\eta,
    \label{lemma2_expansion}
\end{equation}
with the remainder term
\begin{equation}
	\mathcal{R}_\eta = \int_\mathcal{X} R \frac{\Pi_\eta\left(\widehat{R}_\eta - \widehat{R}_0 - \eta \widetilde{R}\right)}{\eta^2} \, \psi_\eta.
\end{equation}
By Lemma \ref{lemma2} and Assumption \ref{assumption2}, the remainder $\mathcal{R}_\eta$ is uniformly bounded for~$\eta \in\left[-\eta_*, \eta_*\right]$. Note that, since $\Pi_0\widehat{R}_0 = \widehat{R}_0$ and $\Pi_0 R = R$, we can write
\begin{align}
	\int_\mathcal{X} R \Pi_\eta\widehat{R}_0 \, \psi_0 = \int_\mathcal{X} R \widehat{R}_0 \, \psi_0 - \left(\int_\mathcal{X} \widehat{R}_0 \, \psi_\eta\right)\left(\int_\mathcal{X} R \, \psi_0\right) = \int_\mathcal{X} R \Pi_0\widehat{R}_0 \, \psi_0.
\end{align}
The same argument is valid for $\widetilde{R}$. We next use Lemma \ref{lemma1} to write the two integrals on the right hand side of \eqref{lemma2_expansion} as
\begin{equation}
    \int_\mathcal{X} R \Pi_\eta\widehat{R}_\eta \, \psi_\eta = \int_\mathcal{X} R \Pi_0\widehat{R}_0 \, \psi_0 + \eta \int_\mathcal{X} R\left(\widehat{R}_0 \mathfrak{f}_1 + \Pi_0\widetilde{R}\right) \psi_{0} + \eta^2 \widetilde{\mathcal{R}}_\eta,
    \label{question1}
\end{equation}
where
\begin{equation}
\begin{aligned}
    \widetilde{\mathcal{R}}_\eta = \mathcal{R}_\eta &+ \mathscr{R}_{\eta, R \widehat{R}_0,2} + \eta \mathscr{R}_{\eta, R \Pi_0\widetilde{R},1} + \int_\mathcal{X} R\widetilde{R}\f_1 \, \psi_0 \\ 
    &- \frac{1}{\eta^2}\left(\int_\mathcal{X} \widehat{R}_0 \, \psi_\eta\right)\left(\int_\mathcal{X} R \, \psi_\eta\right) - \frac{1}{\eta}\left(\int_\mathcal{X} \widetilde{R} \, \psi_\eta\right)\left(\int_\mathcal{X} R \, \psi_\eta\right).
    \label{Rtilde_eta}
\end{aligned}
\end{equation}
Equation \eqref{question1} leads to $\sigma_{R, \eta}^2 - \sigma_{R, 0}^2 - \eta\widetilde{\sigma}_{R, 0}^2 = \eta^2 \widetilde{\mathcal{R}}_\eta$, with
\begin{equation}
	\widetilde{\sigma}_{R, 0}^2 = 2\int_\mathcal{X} R\left(\widehat{R}_0 \mathfrak{f}_1 + \Pi_0\widetilde{R}\right) \psi_{0}.
\end{equation}
In view of Lemma \ref{lemma1} and Assumption \ref{assumption2}, the remainder \eqref{Rtilde_eta} is uniformly bounded for $\eta \in\left[-\eta_{*}, \eta_{*}\right]$. This proves that \eqref{eq9_sticky} holds.
\end{proof}

We conclude this section with a technical result used in the proof of Proposition~\ref{proposition1}.

\begin{lemma}
\label{lemma2}
Suppose that Assumptions \ref{assumption1} through \ref{assumption4} hold true. Fix $\eta_*>0$ and~$\varphi \in \S$. Denote by $m\geq 1$ an integer such that $\varphi\in B^\infty_m$. Consider for any $\eta \in \mathbb{R}$ the unique solution $\phi_{\eta} \in \Pi_\eta B^\infty_m$ of the Poisson equation $-\mathcal{L}_\eta\phi_\eta = \Pi_\eta \varphi$, and define~$\widetilde{\phi} = -\mathcal{L}_0^{-1} \Pi_0 \Lp\phi_0 \in \S$. Then, there exists $n \geq 1$ and $K \in \mathbb{R}_+$ such that
\begin{equation}
    \forall|\eta|\leq \eta_*, \qquad \left\|\Pi_\eta\left(\phi_\eta - \phi_0 - \eta \widetilde{\phi}\right)\right\|_{B_n^\infty} \leqslant K \eta^{2} .
\end{equation}
\end{lemma}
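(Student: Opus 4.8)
The plan is to derive the Poisson equation satisfied by the error term and then apply the uniform resolvent bound~\eqref{est3}. Set $r_\eta = \phi_\eta - \phi_0 - \eta\widetilde{\phi}$; this is a smooth function (hypoelliptic regularity gives $\phi_\eta \in C^\infty$, since $\Pi_\eta\varphi$ is smooth, while $\phi_0,\widetilde{\phi} \in \S$). Using the three defining identities $-\L_0\phi_0 = \Pi_0\varphi$, $-\L_\eta\phi_\eta = \Pi_\eta\varphi$, $-\L_0\widetilde{\phi} = \Pi_0\Lp\phi_0$ together with $\L_\eta = \L_0 + \eta\Lp$, a direct computation gives
\[
-\L_\eta r_\eta = \left(\Pi_\eta\varphi - \Pi_0\varphi\right) + \eta\left(\Lp\phi_0 - \Pi_0\Lp\phi_0\right) + \eta^2\,\Lp\widetilde{\phi}.
\]
The key observation is that the first two summands are \emph{constant} functions, since $\Pi_\eta\varphi - \Pi_0\varphi = \int_\mathcal{X}\varphi\,\psi_0 - \int_\mathcal{X}\varphi\,\psi_\eta$ and $\Lp\phi_0 - \Pi_0\Lp\phi_0 = \int_\mathcal{X}(\Lp\phi_0)\,\psi_0$; hence $-\L_\eta r_\eta = c_\eta + \eta^2\,\Lp\widetilde{\phi}$ for some constant $c_\eta \in \R$.

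First I would remove the constant $c_\eta$ using the invariance of $\psi_\eta$. Since $\psi_\eta$ is infinitesimally invariant, $\int_\mathcal{X}(\L_\eta g)\,\psi_\eta = 0$ for every $g \in \S$ (a standard truncation argument, already used implicitly around~\eqref{est_lyapunov}), and $\int_\mathcal{X}(\L_\eta\phi_\eta)\,\psi_\eta = -\int_\mathcal{X}\Pi_\eta\varphi\,\psi_\eta = 0$; therefore $\int_\mathcal{X}(-\L_\eta r_\eta)\,\psi_\eta = 0$. Applying $\Pi_\eta$ to the previous display --- which leaves $-\L_\eta r_\eta$ unchanged, by what we just established, while annihilating the constant $c_\eta$ --- and noting that $-\L_\eta(\Pi_\eta r_\eta) = -\L_\eta r_\eta$ because $\L_\eta$ kills the constant $\int_\mathcal{X} r_\eta\,\psi_\eta$, we obtain
\[
-\L_\eta\!\left(\Pi_\eta r_\eta\right) = \eta^2\,\Pi_\eta\!\left(\Lp\widetilde{\phi}\right).
\]
(One could alternatively invoke Lemma~\ref{lemma1} for $\int_\mathcal{X}\varphi\,\psi_\eta$ together with~\eqref{poissonf1} and~\eqref{f1} to see directly that $c_\eta = \bigO(\eta^2)$.)

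Next I would pin down the weighted space in which to invert $\L_\eta$. By Assumptions~\ref{assumption3} and~\ref{assumption4}: $\phi_0 \in \S$, hence $\Lp\phi_0 \in \S$, hence $\widetilde{\phi} = -\L_0^{-1}\Pi_0\Lp\phi_0 \in \S$, and finally $\Lp\widetilde{\phi} \in \S$, so $\Lp\widetilde{\phi} \in B^\infty_{n_0}$ for some $n_0$. Choosing $n \geq n_0$ large enough that also $\phi_0,\widetilde{\phi} \in B^\infty_n$, and recalling that $\phi_\eta \in B^\infty_m$, we get $r_\eta \in B^\infty_n$ with $n$ independent of $\eta$. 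Then both $\Pi_\eta r_\eta$ and $\Pi_\eta(\Lp\widetilde{\phi})$ lie in $\Pi_\eta B^\infty_n$, so the previous display and~\eqref{est3} give $\Pi_\eta r_\eta = \eta^2\,(-\L_\eta^{-1})\Pi_\eta(\Lp\widetilde{\phi})$, whence
\[
\left\|\Pi_\eta r_\eta\right\|_{B^\infty_n} \leq \eta^2\,K_{n,\eta_*}\left\|\Pi_\eta\!\left(\Lp\widetilde{\phi}\right)\right\|_{B^\infty_n} \leq \eta^2\,K_{n,\eta_*}\left(1 + C_{n,\eta_*}\right)\left\|\Lp\widetilde{\phi}\right\|_{B^\infty_n},
\]
the last inequality bounding the $\psi_\eta$-average of $\Lp\widetilde{\phi}$ uniformly in $\eta$ via $\mathcal{K}_n \geq 1$ and~\eqref{est1}. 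This is the claimed estimate with $K = K_{n,\eta_*}(1 + C_{n,\eta_*})\|\Lp\widetilde{\phi}\|_{B^\infty_n}$.

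The algebra in the first step is routine. The genuine care points are: $\phi_\eta$ is only known to lie in $B^\infty_m$, so it cannot be expanded in $\eta$ --- but this is never needed, only $r_\eta \in B^\infty_n$ enters; the weight index $n$ must be chosen uniformly in $\eta$; and the identity $\int_\mathcal{X}(\L_\eta g)\,\psi_\eta = 0$ for $g \in \S$ relies on the usual truncation argument. The one conceptual point I would not want the reader to miss is that the obstruction $c_\eta$ is a \emph{constant}, hence harmless once projected by $\Pi_\eta$ --- which is precisely the operator appearing in the statement of the lemma.
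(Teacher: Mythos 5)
Your proof is correct, and it reaches the paper's key identity $-\L_\eta\bigl(\Pi_\eta(\phi_\eta-\phi_0-\eta\widetilde{\phi})\bigr)=\eta^2\,\Pi_\eta(\Lp\widetilde{\phi})$ by a shorter route. The paper performs the same initial algebra, but then invokes Lemma~\ref{lemma1} (through $\f_1=-(\L_0^{-1})^*\Lp^*\ind$) to show that the order-$\eta$ constant $(\Pi_\eta-\Pi_0)\varphi$ cancels $-\eta(1-\Pi_0)\Lp\phi_0$ up to $\bigO(\eta^2)$, and only afterwards observes that the leftover constant must adjust so that the right-hand side lies in the image of $\L_\eta$, \emph{i.e.} equals $\eta^2\Pi_\eta\Lp\widetilde{\phi}$. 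You notice that this last observation already does all the work: since the first two summands are constants, their precise size is irrelevant once $\Pi_\eta$ is applied (which is exactly the quantity the lemma estimates), and the value of the constant is pinned down by the zero $\psi_\eta$-average of $\L_\eta r_\eta$. Both arguments rest on the same two ingredients at the end, the uniform resolvent bound \eqref{est3} on $\Pi_\eta B^\infty_n$ and the uniform moment bound \eqref{est1} to control $\Pi_\eta(\Lp\widetilde{\phi})$, and both use the infinitesimal invariance $\int_\mathcal{X}(\L_\eta g)\,\psi_\eta=0$ for $g\in\S$ (the paper uses it implicitly in the phrase ``has to be in the image of $\L_\eta$'' and in the proof of Lemma~\ref{lemma1}), so your appeal to the truncation argument is on the same footing as the paper's. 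What your variant buys is independence from Lemma~\ref{lemma1} and a cleaner identification of where the $\eta^2$ comes from; what the paper's detour buys is the explicit first-order cancellation, which makes visible that $c_\eta$ is itself $\bigO(\eta^2)$ — information your argument recovers only a posteriori, and which is not needed for the statement. Two small points to keep tidy: choose $n$ at least $\max(m,n_0)$ and large enough that $\phi_0,\widetilde{\phi}\in B^\infty_n$ (you say ``large enough'', just make the dependence explicit), and note that the smoothness of $\phi_\eta$ is not actually needed — only $r_\eta\in B^\infty_n$ and the Poisson equations enter.
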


\begin{proof}
Since $\L_\eta = \L_0 + \eta\Lp$, a simple computation shows that
\begin{equation}
    -\mathcal{L}_\eta\left(\phi_\eta - \phi_0 - \eta\widetilde{\phi}\right) = \left(\Pi_\eta - \Pi_0\right) \varphi + \eta\left(1 - \Pi_0\right) \Lp \phi_0 + \eta^2 \Lp \widetilde{\phi}.
\end{equation}
In view of Lemma \ref{lemma1}, and since $\f_1$ has average 0 with respect to $\psi_0$,
\begin{align}
    \left(\Pi_\eta - \Pi_0\right) \varphi &= -\eta \int_\mathcal{X} \varphi \f_1 \, \psi_0 - \eta^2 \mathscr{R}_{\eta, \varphi,2} = -\eta \int_\mathcal{X}\left(\Pi_0 \varphi\right) \mathfrak{f}_1 \, \psi_0 - \eta^2 \mathscr{R}_{\eta, \varphi,2} \\
    &= \eta \int_\mathcal{X} \Lp \mathcal{L}_0^{-1} \Pi_0 \varphi \, \psi_0 - \eta^2 \mathscr{R}_{\eta, \varphi,2} = -\eta\int_\mathcal{X}\left(\Lp\phi_0\right) \, \psi_0 - \eta^2 \mathscr{R}_{\eta, \varphi,2} \\
    &= -\eta\left(1 - \Pi_0\right) \Lp \phi_0 - \eta^2 \mathscr{R}_{\eta, \varphi,2}.
    \label{last_line}
\end{align}
Therefore, $-\mathcal{L}_\eta\left(\phi_\eta - \phi_0 - \eta\widetilde{\phi}\right) = \eta^2 \Lp \widetilde{\phi} - \eta^2 \mathscr{R}_{\eta, \varphi,2} = \eta^2 \Pi_\eta\Lp \widetilde{\phi}$, because the right-hand side is $\eta^2 \Lp \widetilde{\phi}$ up to a constant term, and has to be in the image of $\L_\eta$. It is clear that $\Lp \widetilde{\phi}\in\S$, as $\widetilde{\phi}\in\S$ and $\Lp$ stabilizes $\S$ by Assumption \ref{assumption4}. Thus, there exists $n\geq1$ such that $\Lp \widetilde{\phi}\in B^\infty_n$. Using the definition of the operator norm,
\begin{align}
  \left\|\Pi_\eta\left(\phi_\eta - \phi_0 - \eta\widetilde{\phi}\right)\right\|_{B^\infty_n} &= \eta^2\left\|\L_\eta^{-1}\left(\Pi_\eta\Lp\widetilde{\phi}\right)\right\|_{B^\infty_n} \\
  &\leq \eta^2\|\L_\eta^{-1}\|_{\mathcal{B}(\Pi_\eta B^\infty_n)}\left\|\Pi_\eta(\Lp\widetilde{\phi})\right\|_{B^\infty_n},
\end{align}
where $\|\L_\eta^{-1}\|_{\mathcal{B}(\Pi_\eta B^\infty_n)}$ is uniformly bounded in view of \eqref{est3}, as is $\|\Pi_\eta(\Lp\widetilde{\phi})\|_{B^\infty_n}$ by \eqref{est1}. This gives the desired result.
\end{proof}

\paragraph{Bounds on the finite-time bias} For completeness, we also state a result on the finite-time bias of the estimator, which essentially says that this bias is of order~$1/(\eta t)$. For technical reasons, this estimate however has to be formulated in a more cumbersome way. Nevertheless, bounds on the statistical error given in Proposition \ref{proposition1} are more important in practice, which is why we did not try to improve the bounds below.

\begin{lemma}
\label{finite_time_bias_lemma}
Consider the same setting as Proposition \ref{proposition1}, and assume that $\Sigma = \sigma\sigma^T \in \S$. Then, for any $k\geq 1$ and any $\eta_*>0$, there exist $\mathcal{C}_k$ and $\mathcal{M}_k$ (which depend and~$R$) such that
\begin{equation}
    \forall |\eta|\leq \eta_*, \quad \forall t>0, \qquad \left|\mathbb{E}\left(\widehat{\Phi}_{\eta, t}\right)-\widehat{\rho}_{1,\eta}\right| \leqslant \frac{\mathcal{C}_k}{\eta t} + \mathcal{M}_k\eta^k.
    \label{eq10_sticky}
\end{equation}
\end{lemma}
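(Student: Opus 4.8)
The plan is to estimate the two standard sources of error in the estimator $\widehat{\Phi}_{\eta,t}$ separately, starting from the It\^o/Dynkin identity associated with the Poisson equation for the perturbed dynamics. Concretely, let $\widehat{R}_\eta \in \Pi_\eta B^\infty_n$ be the unique solution of $-\L_\eta \widehat{R}_\eta = \Pi_\eta R = R - \E_\eta(R)$ guaranteed by \eqref{est3}. Applying It\^o's formula to $s \mapsto \widehat{R}_\eta(X_s^\eta)$ and integrating gives the standard representation
\begin{equation}
	\frac{1}{t}\int_0^t R(X_s^\eta)\, ds - \E_\eta(R) = \frac{\widehat{R}_\eta(X_0^\eta) - \widehat{R}_\eta(X_t^\eta)}{t} + \frac{1}{t}\int_0^t \nabla\widehat{R}_\eta(X_s^\eta)^T \sigma(X_s^\eta)\, dW_s,
\end{equation}
so that, dividing by $\eta$ and taking expectations, the martingale term vanishes and
\begin{equation}
	\E\!\left(\widehat{\Phi}_{\eta,t}\right) - \widehat{\rho}_{1,\eta} = \frac{1}{\eta t}\Big(\E\big[\widehat{R}_\eta(X_0^\eta)\big] - \E\big[\widehat{R}_\eta(X_t^\eta)\big]\Big).
\end{equation}
The first step is therefore to control $\E[\widehat{R}_\eta(X_s^\eta)]$ uniformly in $s \ge 0$ and in $|\eta|\le\eta_*$. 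Since $\widehat{R}_\eta \in \Pi_\eta B^\infty_n$, we have $|\widehat{R}_\eta| \le \|\widehat{R}_\eta\|_{B^\infty_n}\mathcal{K}_n$, and by \eqref{est3} together with $\|R\|_{B^\infty_m}<\infty$ (as $R\in\S_0\subset\S$) the norm $\|\widehat{R}_\eta\|_{B^\infty_n}$ is bounded uniformly on $[-\eta_*,\eta_*]$; then $\E[\mathcal{K}_n(X_s^\eta)] = (\e^{s\L_\eta}\mathcal{K}_n)(X_0^\eta) \le M_{n,\eta_*}\mathcal{K}_n(X_0^\eta)$ by \eqref{est2}, and this is integrable against $\mu_{\rm init}$ by hypothesis. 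This already yields the $\mathcal{C}_k/(\eta t)$ term — in fact even a cleaner $\mathcal{C}/(\eta t)$ bound — so the role of the parameter $k$ and the extra term $\mathcal{M}_k\eta^k$ in \eqref{eq10_sticky} must come from somewhere else, namely from the need to replace the $\eta$-dependent object $\widehat{R}_\eta$ by something that can be bounded with $\eta$-independent constants, or from a subtlety in applying It\^o's formula when $X_0^\eta$ is not at stationarity.

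I expect the actual reason for the cumbersome $k$-dependent formulation to be the following: the clean argument above requires $\widehat{R}_\eta$ to be smooth enough and of controlled growth for It\^o's formula to apply and for the martingale to be a true martingale (not merely a local one), but Assumption \ref{assumption2} only gives $\widehat{R}_\eta \in \Pi_\eta B^\infty_n$, not $\widehat{R}_\eta \in \S$ with controlled derivatives — exactly the obstruction already encountered in the proof of Lemma \ref{lemma1}. So instead of inverting $\L_\eta$ directly, the plan is to reuse the pseudo-inverse machinery: approximate $\widehat{R}_\eta$ by $-Q_{\eta,k}R \in \S_0$ (the operator from \eqref{pseudo_inverse}, which does have all derivatives of controlled growth because it is a finite composition of operators stabilizing $\S$), with $\L_\eta(-Q_{\eta,k}R) = -\Pi_0 R - (-1)^{k-1}\eta^k(\Lp\Pi_0\Lr^{-1}\Pi_0)^k R$. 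Writing $\widehat{R}_\eta = \Pi_\eta(-Q_{\eta,k}R) + \L_\eta^{-1}\Pi_\eta\big[(-1)^{k-1}\eta^k(\Lp\Pi_0\Lr^{-1}\Pi_0)^kR + (\Pi_\eta-\Pi_0)R\big]$, one applies It\^o's formula only to the \emph{smooth} piece $-Q_{\eta,k}R$ (here the hypothesis $\Sigma = \sigma\sigma^T\in\S$ ensures the diffusion coefficient multiplying the Hessian term stays in $\S$, so $\L_\eta(-Q_{\eta,k}R)$ and the generator identity make sense pointwise and the martingale is genuine), producing the $1/(\eta t)$ term with the now-harmless remainder $-(-1)^{k-1}\eta^k(\Lp\Pi_0\Lr^{-1}\Pi_0)^kR$ of size $\bigO(\eta^k)$ contributing the $\mathcal{M}_k\eta^k$ term after division by $\eta$ and time-averaging. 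The leftover operator piece $\L_\eta^{-1}\Pi_\eta[\cdots]$ is itself $\bigO(\eta^{k-1})$ in $\Pi_\eta B^\infty_n$-norm by \eqref{est3} and hence contributes to $\E(\widehat{\Phi}_{\eta,t}) - \widehat{\rho}_{1,\eta}$ a term bounded by $\bigO(\eta^{k-1})/( \eta t) \cdot (\text{stuff}) + \bigO(\eta^{k})$ — one re-indexes $k$ to absorb this.

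The steps, in order: (1) fix $k$, introduce $\Psi_{\eta,k} = -Q_{\eta,k}R \in \S_0$ and record the identity $-\L_\eta\Psi_{\eta,k} = \Pi_0 R + (-1)^{k-1}\eta^k(\Lp\Pi_0\Lr^{-1}\Pi_0)^kR$; (2) apply It\^o's formula to $\Psi_{\eta,k}(X_s^\eta)$ on $[0,t]$, check via the $B^\infty$-bounds and $\Sigma\in\S$ that the stochastic integral is a martingale with zero mean, and take expectations to get
\begin{equation}
	\E\!\left(\widehat{\Phi}_{\eta,t}\right) - \frac{1}{\eta}\E_\eta(R) = \frac{1}{\eta t}\E\!\left[\Psi_{\eta,k}(X_0^\eta) - \Psi_{\eta,k}(X_t^\eta)\right] + \frac{(-1)^{k-1}\eta^{k-1}}{t}\int_0^t \E\!\left[(\Lp\Pi_0\Lr^{-1}\Pi_0)^kR\,(X_s^\eta)\right] ds;
\end{equation}
(3) bound the first term on the right by $\mathcal{C}_k/(\eta t)$ using $|\Psi_{\eta,k}|\le\|\Psi_{\eta,k}\|_{B^\infty_{n}}\mathcal{K}_{n}$, uniform boundedness of $\|\Psi_{\eta,k}\|_{B^\infty_n}$ on $[-\eta_*,\eta_*]$ (finite composition of $\S$-stable operators), and $\sup_s\E[\mathcal{K}_n(X_s^\eta)]\le M_{n,\eta_*}\mu_{\rm init}(\mathcal{K}_n)<\infty$ from \eqref{est2}; (4) bound the second term by $\mathcal{M}_k'\eta^{k-1}$ likewise, since $(\Lp\Pi_0\Lr^{-1}\Pi_0)^kR\in\S\subset B^\infty_{n'}$ for some $n'$; (5) observe $\widehat{\rho}_{1,\eta}=\E_\eta(R)/\eta$ and combine, then relabel $k\mapsto k+1$ (and rename constants) to land exactly on \eqref{eq10_sticky}. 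The main obstacle is step (2): making rigorous that It\^o's formula applies and that the stochastic integral against $dW_s$ has zero expectation despite $X_0^\eta$ being out of stationarity and $\mathcal{X}$ possibly unbounded — this is where $\Sigma\in\S$, the growth control on $\nabla\Psi_{\eta,k}$ and $\sigma$ inherited from $\S$, and the Lyapunov moment bound \eqref{est2} all have to be combined to dominate $\E\int_0^t|\nabla\Psi_{\eta,k}^T\sigma|^2(X_s^\eta)\,ds < \infty$; everything after that is the routine $B^\infty$-bookkeeping already used repeatedly in Lemma \ref{lemma1} and Lemma \ref{lemma2}.
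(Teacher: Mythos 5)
Your proposal matches the paper's proof in all essentials: the paper likewise observes that $\widehat{R}_\eta\in\S$ cannot be guaranteed, replaces it by the smooth surrogate $\widehat{R}_{\eta,k}=Q_{\eta,k+1}\Pi_\eta R$ built from the pseudo-inverse \eqref{pseudo_inverse}, applies It\^o's formula to this surrogate (using $\Sigma\in\S$ and \eqref{est2} to control the martingale term), and bounds the boundary term by $\mathcal{C}_k/(\eta t)$ and the truncation error of the Neumann series by $\mathcal{M}_k\eta^k$ via the same $B^\infty$/Lyapunov bookkeeping. The only cosmetic differences are that you apply $Q_{\eta,k}$ to $R$ and relabel $k\mapsto k+1$ at the end, while the paper applies $Q_{\eta,k+1}$ to $\Pi_\eta R$ and phrases the remainder as $\L_\eta(\widehat{R}_{\eta,k}-\widehat{R}_\eta)$; this does not change the argument.
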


\begin{proof}
	This result would be easy to prove if $\widehat{R}_{\eta} \in \S$, where $\widehat{R}_{\eta}$ is the unique solution to the Poisson equation $-\L_\eta\widehat{R}_{\eta} = \Pi_\eta R$ discussed in the proof of Proposition~\ref{proposition1}. However, there is no result that ensures that this property holds, so we turn to an alternative proof where we approximate $\widehat{R}_{\eta}$ with high precision by $\widehat{R}_{\eta,k} \in \S$. More precisely, we introduce~$\widehat{R}_{\eta,k} = Q_{\eta,k+1}\Pi_\eta R$, where $Q_{\eta,k}$ is the pseudo-inverse operator defined in \eqref{pseudo_inverse}, so that $\widehat{R}_{\eta,k}\in\S$. Since~$\widehat{R}_{\eta,k} \in C^\infty(\mathcal{X})$, we use It\^o's formula to write
\begin{equation}
    d\widehat{R}_{\eta,k}(X^\eta_t) = \L_\eta \widehat{R}_{\eta,k}(X^\eta_t) \, dt + \nabla \widehat{R}_{\eta,k}(X^\eta_t)^T \Sigma(X^\eta_t) \, dW_t.
    \label{lemma2_ito}
\end{equation}
The martingale term in \eqref{lemma2_ito} is square integrable since, in view of \eqref{est2} and the fact that~$\Sigma, \widehat{R}_{\eta,k} \in \S$ (so that $|\Sigma\nabla \widehat{R}_{\eta,k}|^2 \in \S$), there exists $\ell\in\N$ such that
\begin{align}
	\int_0^t \E\left[\left|\Sigma(X^\eta_s)\nabla \widehat{R}_{\eta,k}(X^\eta_s)\right|^2\right] ds &= \int_\mathcal{X}\int_0^t \e^{s\L_\eta} \left(\left|\Sigma\nabla \widehat{R}_{\eta,k}\right|^2\right) ds \, d\mu_\mathrm{init} \\
	&\leq \left\|\left|\Sigma\nabla\widehat{R}_{\eta,k}\right|^2\right\|_{B^\infty_\ell} \int_\mathcal{X}\int_0^t \e^{s\L_\eta} \mathcal{K}_\ell \, ds \, d\mu_\mathrm{init}
	\label{martingale_sqr_int}
\end{align}
is uniformly bounded for $|\eta| \leq \eta_*$. Next, we write
\begin{equation}
	\widehat{\Phi}_{\eta, t} - \widehat{\rho}_{1,\eta} = -\frac{1}{\eta t}\int_0^t \L_\eta \widehat{R}_{\eta,k} (X_s^\eta) \, ds + \eta^{k-1}\mathscr{R}_{\eta,k},
	\label{est_diff}
\end{equation}
with remainder term
\begin{equation}
	\mathscr{R}_{\eta,k} = \frac{1}{\eta^k t}\int_0^t \L_\eta\left(\widehat{R}_{\eta,k} - \widehat{R}_{\eta}\right)\left(X_s^\eta\right) ds.
\end{equation}
%
%
In view of \eqref{lemma2_ito}, we write \eqref{est_diff} as
\begin{align}
        \widehat{\Phi}_{\eta, t} - \widehat{\rho}_{1,\eta} = \frac{\widehat{R}_{\eta,k}\left(X_{0}^\eta\right) - \widehat{R}_{\eta,k}\left(X_t^\eta\right)}{\eta t}+\frac{1}{\eta t} \int_0^t \nabla \widehat{R}_{\eta,k}\left(X_s^\eta\right)^T\Sigma(X^\eta_s) \, dW_s + \eta^{k-1} \mathscr{R}_{\eta,k}.
    \label{question2}
\end{align}
In view of~\eqref{est2} and \eqref{lemma1_A_bound}, $\E[|\mathscr{R}_{\eta,k}|]$ is uniformly bounded for $|\eta| \leq \eta_*$ by $\mathcal{M}_k\in\R_+$. By taking expectations, \eqref{question2} then leads to
\begin{equation}
    \left|\mathbb{E}\left(\widehat{\Phi}_{\eta, t}\right) - \widehat{\rho}_{1,\eta}\right| \leq \frac{1}{\eta t} \left|\mathbb{E}\left[\widehat{R}_{\eta,k}\left(X_0^\eta\right) - \widehat{R}_{\eta,k}\left(X_t^\eta\right)\right]\right| + \eta^k \mathcal{M}_k.
\end{equation}
Since $\widehat{R}_{\eta,k}\in \S$, there exists $n\in\N$ such that $\widehat{R}_{\eta,k}\in B^\infty_n$. By Assumptions \ref{assumption3} and \ref{assumption4}, there exists $n'\in\N$ and $K'_{n',\eta_*}\in\R_+$ (depending on $n, n', \eta_*$ and $k$) such that
\begin{equation}
	\left\|\widehat{R}_{\eta,k}\right\|_{B^\infty_{n}} \leq K'_{n',\eta_*}\|\Pi_\eta R\|_{B^\infty_{n'}}.
	\label{Rhat_bound}
\end{equation}
In view of \eqref{Rhat_bound} and \eqref{est2},
\begin{align}
	\left|\E\left(\widehat{R}_{\eta,k}(X_t^\eta)\right)\right| &\leq \E(\mathcal{K}_n(X_t^\eta)) \left\|\widehat{R}_{\eta,k}\right\|_{B^\infty_n} \\
	&\leq K'_{n',\eta_*}\sup_{t\in\R_+} \left(\int_\mathcal{X} \e^{t\L_\eta}\mathcal{K}_n \, d\mu_\text{init}\right) \left\|\Pi_\eta R\right\|_{B^\infty_{n'}} \\
	&\leq K'_{n',\eta_*}M_{n,\eta_*} \left\|\Pi_\eta R\right\|_{B^\infty_{n'}}\int_\mathcal{X} \mathcal{K}_n \, \mu_\mathrm{init},
\end{align}
This leads to \eqref{eq10_sticky} with
\begin{equation}
    \mathcal{C}_k = 2K'_{n',\eta_*}M_{n,\eta_*}\int_\mathcal{X}\mathcal{K}_n \, \mu_\mathrm{init} < +\infty,
\end{equation}
thus concluding the proof. 
\end{proof}

\section{Extending the range of linear response with synthetic forcings}
\label{sec:extending}
We discuss in this section the notion of synthetic forcings, and how they can be used to extend the regime of linear response. We start by describing the notion of synthetic forcings in Section \ref{subsec:notion}, and give examples in Section \ref{subsec:examples} for both overdamped and underdamped Langevin dynamics. Then, we provide a methodology for choosing the magnitude of the forcings in Section \ref{subsec:choosing_alpha}. Finally, we briefly discuss how to linearly combine multiple extra forcings in Section \ref{subsec:lin_comb}.

\subsection{Notion of synthetic forcings}
\label{subsec:notion}
As discussed in Section \ref{subsubsec:gen_set}, a system with a nonequilibrium perturbation has a generator of the form
\begin{equation}
	\L_\eta = \L_0 + \eta\widetilde{\L},
	\label{gen_noneq}
\end{equation}
where $\widetilde{\L}$ is the generator of some perturbation to the reference dynamics with generator $\Lr$. We considered in Section \ref{sec:review} only the case where $\widetilde{\L}$ is $\Lp$ -- in other words, the perturbation corresponds to some physical perturbation on the system, which is the typical scenario in the context of statistical physics. We now consider, in addition to the physical perturbation, some possibly nonphysical extra perturbation, which we denote by $\Lx$, so that $\widetilde{\L}$ in \eqref{gen_noneq} is replaced by
\begin{equation}
	\widetilde{\L} = \Lp + \alpha\Lx,
\end{equation}
for $\alpha\in\R$. We call the resulting perturbation a \emph{synthetic forcing}.

The key requirement of synthetic forcings is that the the addition of the extra forcing should preserve the invariant measure of the reference dynamics, thus preserving the linear response. In other words, the dynamics with generator
\begin{equation}
	\L_{\eta,\alpha} = \Lr + \eta\left(\Lp + \alpha \Lx\right)
\end{equation}
has the same linear response as the dynamics associated with $\L_{\eta,0} = \Lr + \eta\Lp$ when
\begin{equation}
	\Lx^*\ind = 0.
	\label{synthetic_condition}
\end{equation}
We denote by $\psi_{\eta,\alpha}$ the invariant measure for the dynamics associated with the generator $\L_{\eta,\alpha}$, \emph{i.e.} the solution to the stationary Fokker--Planck equation
\begin{equation}
	\L_{\eta,\alpha}^\dagger\psi_{\eta,\alpha} = 0.
	\label{FP_eta_alpha}
\end{equation} 
When there is no indication of the value of $\alpha$, \emph{e.g.} $\psi_\eta$, it means that $\alpha = 0$.

From the definition \eqref{linResf1} of the linear response $\rho_1$, it is indeed easy to see why the linear response is preserved with the addition of $\Lx$: the conjugate response $S$ defined in \eqref{conjugate_response} is preserved, and so is $\f_1$ in view of~\eqref{f1}, which allows us to conclude by Lemma \ref{lemma1}. Let us emphasize that \eqref{synthetic_condition} is the key condition to be satisfied for extra forcings to be admissible. We provide various examples of admissible extra forcings in Section \ref{subsec:examples}. For technical reasons, the extra forcings we consider should satisfy the same conditions as $\Lp$ in Assumption \ref{assumption4}.

One practical interest is to optimize the extra perturbation in order to increase the regime of linear response. As made precise in Proposition \ref{proposition1}, the variance of the estimator \eqref{estimator} is of order~$\bigO(\eta^{-2})$. A larger linear regime therefore means that larger values of $\eta$ can be considered without introducing too much bias on $\rho_1$. This leads in turn to a smaller statistical error, and hence shorter simulation times to reach the same accuracy. One idea in particular is to look for $\widetilde{\mathcal{L}}_\mathrm{extra}$ which minimizes $\rho_2$, the leading order~$\eta^2$ of the nonlinear response, as a proxy for minimizing $|\mathbb{E}_\eta(R)-\rho_1\eta|$, \emph{i.e.} the nonlinear portion of the response. This is discussed in detail in Section \ref{subsec:choosing_alpha}. This also naturally suggests that one could further combine~$k$ forcings in order to cancel the first $k+1$ orders of the response, as discussed in Section \ref{subsec:lin_comb}.

Another approach to optimizing the perturbation is to increase the range of $\eta$ for which the nonlinear response is within some desired distance from the linear regime, in relative error, also discussed Section \ref{subsec:choosing_alpha}.

\subsection{Examples of synthetic forcings}
\label{subsec:examples}
To make synthetic forcings more concrete, we now go over some examples of extra forcings. We first outline in Section~\ref{subsubsec:gen_extra_forcings} the general classes of operators we consider. We then discuss more precisely examples for overdamped Langevin dynamics in Section \ref{subsubsec:ovd_examples} and for underdamped Langevin dynamics in Section \ref{subsubsec:lang_examples}.

\subsubsection{General classes of extra forcings}
\label{subsubsec:gen_extra_forcings}
When considering possible extra forcings, we restrict ourselves to differential operators of at most second-order in order to realize them in Monte--Carlo simulations. We consider the following classes of differential operators:
\begin{enumerate}
    \item \emph{First-order differential operators} $\Lx = G^T \nabla_x$, with $G\colon\mathcal{X} \to \R^d$ such that $\div(G\psi_0) = 0$. The latter condition ensures that \eqref{synthetic_condition} is satisfied since~$\Lx^* = -G^T \nabla_x$;
    \label{item1}
    \item \emph{Second-order differential operators} of the form $\Lx = -\partial_{x_i}^*\partial_{x_i}$ or more generally $-\partial_{x_j}^*D_{ij}\partial_{x_i}$ for some (nonnegative) function $D_{ij}\colon\mathcal{X}\to\R_+$. In the case $\Lx = -\partial_{x_i}^*\partial_{x_i}$, the operator is self-adjoint, \emph{i.e.} $\Lx = \Lx^*$, so that~\eqref{synthetic_condition} is easily seen to hold. For $\Lx = -\partial_{x_j}^*D_{ij}\partial_{x_i}$, it holds that~$\Lx^* = -\partial_{x_i}^*D_{ij}\partial_{x_j}$, which also satisfies \eqref{synthetic_condition}.
    \label{item2}
    \item \emph{First-order differential operators with nontrivial zero order parts}, such as $\Lx = \partial_{x_i}^* = \partial_{x_i}U - \partial_{x_i}$ for $\psi_0(x) = \e^{-U(x)}$. Through some abuse of notation, we use $\partial_{x_i}U$ to denote the multiplication operator by the function~$\partial_{x_i}U$. These operators satisfy $\Lx^* = \partial_{x_i}$, so that \eqref{synthetic_condition} holds.
    \label{item3}
\end{enumerate}
%

The class of extra forcings outlined in items \ref{item1} and \ref{item2} can be easily implemented and realized in Monte--Carlo simulations. Implementing forcings of the form described in item \ref{item3}, however, requires some extra work to take care of the multiplication operator, as we now discuss.

We denote the class of extra forcings described in item \ref{item3} as \emph{Feynman--Kac forcings}, as sampling the dynamics requires the use of the Feynman--Kac formula due to the nontrivial zero order term. Consider the general dynamics~\eqref{general_SDE} with generator~\eqref{gen_generator}, as presented in Section \ref{subsubsec:gen_set}. Suppose that the dynamics has a unique invariant probability measure with density $\psi_0(x) = \e^{-U(x)}$. The perturbed dynamics with generator $\L_{\eta,\alpha} = \Lr + \eta\left(\Lp + \alpha\Lx\right)$, with $\Lx = \xi^T\nabla^*$ for some $\xi\in\R^d$, can be sampled by evolving the SDE
\begin{equation}
	dX_t = \left(b(X_t) + \eta F(X_t) - \eta\alpha\xi\right) dt + \sigma(X_t) \, dW_t,
\end{equation}
and weighting trajectories with the Feynman--Kac weight
    \begin{equation}
  		\omega_t = \exp\left(\eta\alpha\int_0^t \xi^T\nabla U(X_s) \, ds\right).
  		\label{FK_gen_weight}
	\end{equation}
    In practice, this is done by evolving multiple replicas of the system with independent Brownian motions, and using resampling strategies to prevent the weights from degenerating (see for instance~\cite{delmoral} and~\cite[Chapter 6]{freeEnergy}).
    
    From an analytical point of view, items \ref{item1} and \ref{item2} fit in the framework of Section~\ref{sec:review}. Item \ref{item3}, however, does not directly fit in the framework of Section \ref{sec:review}. This is due to the fact that the steady-state measure of the dynamics with generator~$\L_{\eta,\alpha} = \Lr + \eta(\Lp + \alpha\Lx)$ is not a probability measure in general. This is the case only if the weights \eqref{FK_gen_weight} are renormalized. At the level of generators, this amounts to shifting the spectrum of $\L_{\eta,\alpha}$ by $\lambda_{\eta,\alpha}$, where $\lambda_{\eta,\alpha}$ is the nonzero real principal eigenvalue of the operator $\L_{\eta,\alpha}$ (see \cite{ferre_stoltz_2019,ferre2021} and references therein). The magnitude of $\lambda_{\eta,\alpha}$ can be made precise in terms of $\alpha$ and $\eta$ for $\eta$ small, and turns out to be of order $\bigO(\eta^2)$ for~$\eta$ small, as formally derived in Appendix \ref{app:FK_eig}.

Of course, by linearity, one can consider linear combinations of these extra forcings. In the next two sections, we give specific examples of each of the three classes of extra forcings discussed here for both overdamped and underdamped Langevin dynamics.

\subsubsection{Overdamped Langevin dynamics}
\label{subsubsec:ovd_examples}
We start by outlining some examples of synthetic forcings for the overdamped Langevin dynamics introduced in Section \ref{subsubsec:ovd_review}. We consider successively the distinct classes of extra forcings following the general presentation of Section \ref{subsubsec:gen_extra_forcings}.

\begin{example}[{\bf Divergence-free vector field}] We consider a first-order differential operator
    \begin{equation}
        \Lx = G^T\nabla,
        \label{divFree}
    \end{equation}
    where $G \colon \mathcal{X} \to \R^d$ is such that
    \begin{equation}
        \div\left(G(q)\e^{-\beta V(q)}\right) = 0.
        \label{divFree_condition}
    \end{equation}
    The resulting dynamics with the addition of \eqref{divFree} is
    \begin{equation}
        dq_t = \left[-\nabla V(q) + \eta F(q_t) + \alpha\eta G(q_t)\right] dt + \sqrt{\frac{2}{\beta}} \, dW_t.
    \end{equation}
    Condition \eqref{divFree_condition} can be rewritten as $\div(G) = \beta G\nabla V$. This equality is satisfied when~$G$ is both divergence-free and orthogonal to $\nabla V$, which is the situation considered in \cite{luc2015}. For instance, in any dimension $d\geq 2$, with $A$ an arbitrary anti-symmetric matrix, one possible choice for $G$ is
    \begin{equation}
    	G = A\nabla V.
    \label{A1_2D}
    \end{equation}
    As in \cite{general_divfree}, this can be generalized to $G = \Phi'(V)A\nabla V$, where $\Phi \colon \R\to\R$ is some smooth, compactly supported function. More generally, any divergence-free vector field in dimension $d$ can be written as $G = \nabla U_1 \times \cdots \times \nabla U_{d-1}$, where $U_i$ are scalar functions \cite{barbarosie}, a form which was used in \cite{luc2015}. Thus, more generally, $G$ satisfies \eqref{divFree_condition} if and only if it is of the form
    \begin{equation}
  		G = \left(\nabla U_1 \times \cdots \times \nabla U_{d-1}\right)\e^{\beta V}.
  		\label{gen_divFree}
	\end{equation}
	For the one-dimensional dynamics, the only divergence-free vector field is $G(q) = \e^{\beta V(q)}$. Of course, drifts such as \eqref{gen_divFree} may not be used as such in the dynamics when the position space is unbounded, as it is not clear whether the dynamics is well-posed because of the factor $\e^{\beta V}$ and when it is, whether it admits a unique invariant probability measure. Moreover, Assumption \ref{assumption4} may not hold.
\end{example}

\begin{example}[{\bf Modifying the fluctuation-dissipation relation}]
\label{mfdr}
    One possible choice for extra perturbations involving second-order derivatives is
    \begin{equation}
        \Lx = -\beta^{-1}\nabla_q^*\nabla_q = \beta^{-1}\Delta_q - \nabla V^T\nabla_q.
        \label{mfdr_ovd}
    \end{equation}
    Note that $\Lx = \L_0$, so the resulting generator of the perturbed dynamics can be written as
    \begin{equation}
  		\L_{\eta,\alpha} = \Lr + \eta\left(\Lp+\alpha\Lx\right) = (1+\alpha\eta)\Lr + \eta\Lp.
	\end{equation}
    The dynamics associated with $\L_{\eta,\alpha}$ reads
    \begin{equation}
    	dq_t = \left[-(1+\alpha \eta)\nabla V(q_t) + \eta F(q_t)\right] dt + \sqrt{\frac{2(1 + \alpha\eta)}{\beta}} \, dW_t.
    \end{equation}
    This amounts to increasing the magnitude of the terms involved in the fluctuation-dissipation as $\eta$ increases when $\alpha>0$. Note that, in order for this perturbation to be admissible, we require that $1+\alpha\eta>0$. 
    
    More generally, one could consider extra forcings of the form $-\nabla^* D(q)\nabla$ for some~$D\colon \mathcal{X} \to \R^{d\times d}$ with values in the space of symmetric matrix, possibly with $D$ constant.
\end{example}

\begin{example}[{\bf Feynman--Kac forcing}]
\label{ex:FK_ovd}
This choice, although it showcases great promise and potential in extending the linear regime, as we will demonstrate in Sections \ref{subsec:ovd_numerical_1d} and \ref{subsec:ovd_numerical_2d}, is not practical to be simulated by a single long realization of the dynamics as the weights degenerate, rendering the simulation inefficient. Its general form is
    \begin{equation}
        \Lx = \xi^T\nabla^* = \sum_{i=1}^d \xi_i \partial^*_{q_i},
        \label{fk_ovd}
    \end{equation}
    for some vector $\xi \in \R^d$. The generator $\Lr + \eta(\Lp + \alpha\Lx)$ is the sum of first and second-order differential operators and a weight $\xi^T\nabla V$. Its stochastic representation corresponds to evolving the SDE
    \begin{equation}
    	dq_t = \left(-\nabla V(q_t) + \eta F(q_t) - \alpha\eta\xi \right) dt + \sqrt{\frac{2}{\beta}} \, dW_t,
    \label{dyn1}
    \end{equation}
    and using the Feynman--Kac formula \eqref{FK_gen_weight} for the weight involving $\xi^T\nabla V$.
\end{example}

\subsubsection{Langevin dynamics}
\label{subsubsec:lang_examples}
We now outline some examples for Langevin dynamics, presented in Section \ref{subsubsec:lang_review}. Since this dynamics evolves two variables, namely positions $q$ and momenta $p$, we can in theory use both differential operators~$\nabla_q$ and~$\nabla_p$ to construct our synthetic forcings. This leads to additional options for each class of extra forcings.

\begin{example}[{\bf Divergence-free vector field}]
\label{ex:div_free_lang}
    The extra perturbation can be chosen as a first-order differential operator of the form
    \begin{equation}
  		\Lx = G^T\nabla = G_1^T\nabla_q + G_2^T\nabla_p,
  		\label{divFree_lang}
	\end{equation}
    where $G_1,G_2$ are such that
    \begin{equation}
  		\div_q(G_1 \psi_0) + \div_p(G_2 \psi_0) = 0.
	\end{equation}
	A perturbation of similar form to \eqref{divFree_lang} has been used and studied in \cite{nuesken_pavliotis}. For instance, in any dimension $d\geq 2$, a natural choice is $G = A\nabla H$, or more generally~$G = \Phi'(H)A\nabla H$, where $A$ is an antisymmetric matrix. Typical choices include the symplectic matrix
	\begin{equation}
		A = \begin{bmatrix}
			0 & I \\ 
			-I & 0
		\end{bmatrix},
		\label{symp_mat}
	\end{equation}
	or, more generally, linear combinations of matrices of the form
	\begin{equation}
		A = \begin{bmatrix}
			0 & B \\ 
			-B^T & 0
		\end{bmatrix} \qquad \text{or} \qquad 
		\begin{bmatrix}
			A_1 & 0 \\
			0 & A_2
		\end{bmatrix},
	\end{equation}
	with $A_1,A_2$ antisymmetric. In all these expressions, $A_1,A_2$ and $B$ can be functions of $(q,p)$. The choice \eqref{divFree_lang} leads to the generator
	\begin{equation}
  		\left(\Lx\varphi\right)(q,p) = G_1(q,p)^T\nabla_q + G_2(q,p)^T \nabla_p,
  		\label{example34_lang}
	\end{equation}
	and the dynamics
	\begin{align}
	\begin{split}
	\begin{cases}
	    dq_t = M^{-1} p_t \, dt + \alpha\eta G_1(q_t,p_t) \, dt, \\
	    dp_t = -\nabla V(q_t) \, dt + \eta(F(q_t) + \alpha G_2(q_t,p_t)) \, dt - \gamma M^{-1} p_t \, dt + \sqrt{\dfrac{2\gamma}{\beta}} \, dW_t.
	\end{cases}
	\end{split}
	\end{align}
	When $G=J\nabla H$ with $J$ the symplectic matrix, the extra perturbation corresponds to rescaling the Hamiltonian part of the dynamics, which is equivalent, up to a time rescaling, to changing the strength of the fluctuation-dissipation.
\end{example}

\begin{example}[{\bf Modified fluctuation-dissipation}] 
\label{ex:mod_fd_lang}
We first consider an operator that is second-order in $p$. A simple choice for the extra forcing is $\Lx = -\beta^{-1}\nabla^*_p\nabla_p$, so that
    \begin{equation}
        \L_{\eta,\alpha} = \L_\text{ham} + (\gamma + \alpha\eta)\L_\text{FD} + \eta\Lp.
    \end{equation}
    For this forcing to be admissible, we require that $1+\alpha\eta > 0$. The dynamics associated with $\L_{\eta,\alpha}$ reads
\begin{align}
\begin{split}
\begin{cases}
    dq_t = M^{-1} p_t \, dt, \\
    dp_t = -\nabla V(q_t) \, dt + \eta F(q_t) \, dt - \left(\gamma + \alpha\eta\right)M^{-1} p_t \, dt + \sqrt{\dfrac{2(\gamma + \alpha\eta)}{\beta}} \, dW_t.
\end{cases}
\end{split}
\end{align}
The effect of the extra forcing is to rescale the strength of the fluctuation-dissipation, either increasing it when~$\alpha\eta>0$, or reducing it when $\alpha\eta<0$. More generally, one can scale this forcing by a function of $q$, \emph{i.e.} consider $\Lx = -a(q)\nabla_p^*\nabla_p$ with $a(q)\colon \mathcal{X}\to\R$. One can also extend this form to matrix-valued diffusions of the form $\nabla^*_p D(q,p) \nabla_p$.

We can similarly consider a second-order forcing in $q$. One possible choice for the generator is then $\Lx = -\beta^{-1}\nabla^*_q\nabla_q$. Here, we require that $\alpha\eta>0$. The associated dynamics is
\begin{align}
\begin{split}
\begin{cases}
    dq_t = M^{-1} p_t \, dt - \alpha\eta\nabla V(q_t) \, dt + \sqrt{\dfrac{2\alpha\eta}{\beta}} \, dB_t, \\
    dp_t = \left(-\nabla V(q_t) \, dt + \eta F(q_t)\right) dt - \gamma M^{-1} p_t \, dt + \sqrt{\dfrac{2\gamma}{\beta}} \, dW_t,
\end{cases}
\end{split}
\end{align}
where $B_t$ is a standard $d$-dimensional Brownian motion independent of $W_t$. One can also generalize this choice by scaling it by a function of $p$, leading to the more general choice $-b(p)\nabla_q^*\nabla_q$, or introduce a matrix-valued diffusion and consider $\nabla^*_q D(q,p) \nabla_q$.

Additionally, one could consider a mixed-term forcing, such as $-\beta^{-1}\nabla^*_q\nabla_p$ or $-\beta^{-1}\nabla^*_p\nabla_q$. However, since the diffusion matrix must be symmetric and positive-definite, this prevents us from considering these mixed forcings without adding also a contribution~$-\beta^{-1}\nabla_q^*\nabla_q$. Upon writing $\L_{\eta,\alpha}$ as the sum of a first-order differential operator and $D_\alpha \colon \nabla^2$, we require that $D_\alpha$ be symmetric positive, so that one can simulate the associated SDE upon taking the square root of $D_\alpha$, with
\begin{equation}
	D_\alpha = D_0 + \alpha\widetilde{D}, \qquad D_0 = \gamma\begin{bmatrix}
		0 & 0 \\ 0 & \mathrm{Id}
	\end{bmatrix},
\end{equation}
where $\widetilde{D}$ denotes the diffusion associated with the extra forcings.
\end{example}

\begin{example}[{\bf Feynman-Kac forcing}]  
\label{ex:FK_lang}
    One possible form of the general forcing in item (3) of Section \ref{subsubsec:gen_extra_forcings} is the following, for two functions $\xi_1 \colon \R^d \to \R$ and $\xi_2 \colon \mathcal{X} \to \R$:
    \begin{equation}
  		\Lx = \xi_1(p)^T\nabla_q^* + \xi_2(q)^T\nabla_p^*.
	\end{equation}
	The associated dynamics reads
	\begin{align}
	\begin{split}
	\begin{cases}
	    dq_t = M^{-1} p_t \, dt - \alpha\eta\xi_1(p_t) \, dt, \\
	    dp_t = -\nabla V(q_t) \, dt + \eta(F(q_t) - \alpha\xi_2(q_t)) \, dt - \gamma M^{-1} p_t \, dt + \sqrt{\dfrac{2\gamma}{\beta}} \, dW_t,
	\end{cases}
	\end{split}
	\end{align}
	whose trajectories are reweighted using the Feynman--Kac weight involving $\xi_1(p_s)^T\nabla V(q_s) + \xi_2(q_s)^TM^{-1}p_s$ in the integral \eqref{FK_gen_weight}. 
\end{example}

\subsection{Choosing the magnitude of the forcing}
\label{subsec:choosing_alpha}
The synthetic forcing is comprised of a physical forcing and an extra one, which translates into a perturbation~$\Lp + \alpha\Lx$ at the level of generators. We discuss here how to choose the magnitude~$\alpha$ of the extra forcing in order to optimize it in terms of the linear response.

Recall from Section \ref{subsubsec:linRes_theory} that, for small values of $\eta$, we can think of the response as a polynomial in $\eta$:
\begin{equation}
	r_\alpha(\eta) = \E_{\eta,\alpha}(R) = \eta\rho_1(\alpha) + \eta^2\rho_2(\alpha) + \cdots + \eta^{n-1}\rho_{n-1}(\alpha) +\bigO(\eta^n).
	\label{nonlin_res}
\end{equation}
In practice, we want to choose $\alpha$ such that the contribution from nonlinear terms is minimized. There are two main approaches to choosing such an optimal $\alpha$, by (i) canceling the second-order response, or (ii) bounding the relative error with respect to the linear regime. We next discuss both options.

\paragraph{Canceling the second-order response} A first approach to reduce the nonlinear response is to cancel the second-order term $\rho_2$, as this is the dominant part of the nonlinear bias when $\eta$ is small. The second-order response is characterized by~$\f_2$ in~\eqref{eq6}, which reads here
\begin{align}
	\f_2(\alpha) &= (\L_0^{-1})^*(\Lp + \alpha\Lx)^* (\L_0^{-1})^*S = \f_{2,\text{phys}} + \alpha \f_{2,\text{extra}},
	\label{f2_alpha}
\end{align}
where we decomposed the second-order perturbation of the invariant measure into its physical and synthetic parts. Since $\f_2$ is linear in $\alpha$, the value of $\alpha$ for which $\rho_2$ is cancelled is easily obtained from the definition of the second-order response. Indeed,
\begin{equation}
    \rho_2(\alpha) = \int_\mathcal{X} R \f_{2,\text{phys}} \, \psi_0 + \alpha \int_\mathcal{X} R \f_{2,\text{extra}} \, \psi_0 = \rho_2 +\alpha\rho_{2,\mathrm{extra}}.
    \label{rho2_alpha}
\end{equation}
Therefore, $\rho_2(\alpha^\star) = 0$ for
\begin{equation}
    \alpha^\star = -\frac{\displaystyle\int_\mathcal{X} R\f_{2,\text{phys}} \, \psi_0}{\displaystyle\int_\mathcal{X} R\f_{2,\text{extra}} \, \psi_0},
    \label{aOpt1}
\end{equation}
provided that $\int_\mathcal{X} R\f_{2,\text{extra}} \, \psi_0 \ne 0$. The latter condition ensures that $\Lx$ has a nontrivial contribution to the second-order response, and can therefore be used to cancel the second-order response. An important remark is that although~$\alpha^\star$ cancels~$\rho_2$, it might significantly increase $\rho_3$ and higher order terms.

Note that for Feynman--Kac forcings, the computation of $\alpha^\star$ must be reformulated. This is due to the fact that \eqref{eq6} comes from the Fokker--Planck equation, which admits a nontrivial principle eigenvalue for Feynman--Kac forcings, as discussed in Section \ref{subsubsec:gen_extra_forcings} and made precise in Appendix \ref{app:FK_eig}. 

In general, the optimal value $\alpha^*$ cannot be determined a priori. It requires, in principle, two sets of simulations with $\alpha_1 \ne \alpha_2$, from which $\alpha^*$ can be extrapolated due to the linearity of $\rho_2(\alpha)$ in $\alpha$. We discuss in Section \ref{sec:perspectives} how to implement this approach for actual systems of interest.

\begin{remark}
For certain situations, it is possible to determine the impact of the extra perturbations from the response curve for the physical forcing (\emph{i.e.} $\alpha=0$). Consider for instance the setting of Example \ref{mfdr}. We define~$\E_{\eta,\alpha}$ as the steady-state average for $\psi_{\eta,\alpha}$, the solution to the Fokker--Planck equation $\L_{\eta,\alpha}^\dagger\psi_{\eta,\alpha} = 0$. Since~$\L_{\eta,\alpha}$ is proportional to $\L_{\eta/(1+\alpha\eta),0}$ (recall that we require $1+\alpha\eta \ne 0$), a simple computation shows that $\psi_{\eta,\alpha} = \psi_{\eta/(1+\alpha\eta),0}$, so 
\begin{align}
	\forall\eta\in\R, \qquad r_\alpha(\eta) = r_0\left(\frac{\eta}{1+\alpha\eta}\right).
\end{align}
Then,
\begin{align}
	r_\alpha(\eta) &= r_0'(0)\frac{\eta}{1+\alpha\eta} + \frac{1}{2}r_0''(0)\frac{\eta^2}{(1+\alpha\eta)^2} + \bigO(\eta^3) \\
    &= r_0'(0)\eta +\left[\frac{r_0''(0)}{2} - \alpha r_0'(0)\right]\eta^2 + \bigO(\eta^3).
\end{align}
This allows us to find the optimal $\alpha$ which cancels $\rho_2$, given by
\begin{equation}
    \alpha = \frac{r_0''(0)}{2r_0'(0)} = \frac{\rho_2}{\rho_1}.
\end{equation}
This result suggests that, for the example presented here, the magnitude of the extra forcing should be chosen so that $\rho_{2,\mathrm{extra}} = -\rho_1$.
\end{remark}

\paragraph{Bounding the relative error} From a practical viewpoint, and following the usual bias/variance tradeoff, it might be more advantageous to stay close enough to the linear response for larger $\eta$, even if that means decreasing the true linear regime. In order to make these statements quantitative, we consider the relative error $\delta$ in the response relative to the linear response, that is
\begin{equation}
    \delta_\alpha(\eta) = \left|\frac{r_\alpha(\eta) - \rho_1\eta}{\rho_1\eta}\right|.
    \label{rel_err}
\end{equation}
For an extra perturbation $\Lx$ and $\alpha \in \R$ fixed, the response stays in the neighborhood of the linear response until a certain value of $\eta$, at which point it deviates too far from the linear regime.  Since $\delta_\alpha(0) = 0$, we look, for some small fixed value $\eps>0$, for the smallest value of $|\eta|$ such that~$\delta_\alpha(\eta)=\eps$, which we denote by $\eta_{\alpha}(\eps)$. The question of optimizing $\alpha$ can be reformulated as finding $\alpha$ such that $\eta_{\alpha}(\eps)$ is maximized:
\begin{equation}
    \alpha_\star(\eps) = \argmax_{\alpha\in\R} \eta_{\alpha}(\eps), \qquad \eta_{\alpha}(\eps) = \argmin_{\eta\in\R} \left\{|\eta| \colon \delta_\alpha(\eta)\geq \eps\right\}.
    \label{aOpt2}
\end{equation}
As we tighten the bound $\eps$, the value of $\alpha_\star(\eps)$ gets closer to the value $\alpha^\star$ for which $\rho_2(\alpha^\star)=0$. This comes from the fact that $\eta_{\alpha}(\eps)$ is of order $\bigO(\eps)$ for $\alpha\ne\alpha^\star$, and of order $\bigO(\sqrt{\eps})$ for $\alpha=\alpha^\star$. Indeed, \eqref{rel_err} can be written as
\begin{equation}
	 \delta_\alpha(\eta) = \left|\rho_{2,\mathrm{extra}}(\alpha - \alpha^\star)\eta + \bigO(\eta^2)\right|.
\end{equation}
Intuitively, canceling the leading order term (namely~$\rho_2$) in the small $\eta$ regime is equivalent to minimizing the deviation from the linear regime, the latter being implied by the limit $\eps\to 0$. Figure \ref{fig:optAlpha} illustrates how~$\alpha^\star$ and~$\alpha_\star(\eps)$ can yield drastically different response curves. The functions $r_\alpha(\eta)$ are fourth-order polynomials in $\eta$ whose coefficients have been hand-picked in order to demonstrate the possibly very different behaviors of $r_{\alpha^\star}(\eta)$ and $r_{\alpha_\star(\eps)}(\eta)$. Figure \ref{subfig:mock1_res} shows the full response curves $r_\alpha(\eta)$, and Figure \ref{subfig:mock2_res} shows the corresponding relative error curves relative to the linear response \eqref{rel_err}. The curve obtained by choosing~$\alpha^\star$ minimizes the deviation from the linear regime for small $\eta$ as the second-order response~$\rho_2$ is canceled. On the other hand, the curve for $\alpha_\star(\eps)$ with $\eps=0.05$ slightly departs from the linear regime earlier than the one associated with $\alpha^\star$. It stays, however, in the vicinity of the linear response for much larger $\eta$. This behavior is more clearly observed in Figure \ref{subfig:mock2_res}.

\begin{figure}
\centering
\begin{subfigure}{0.49\textwidth}
    \includegraphics[width=\textwidth]{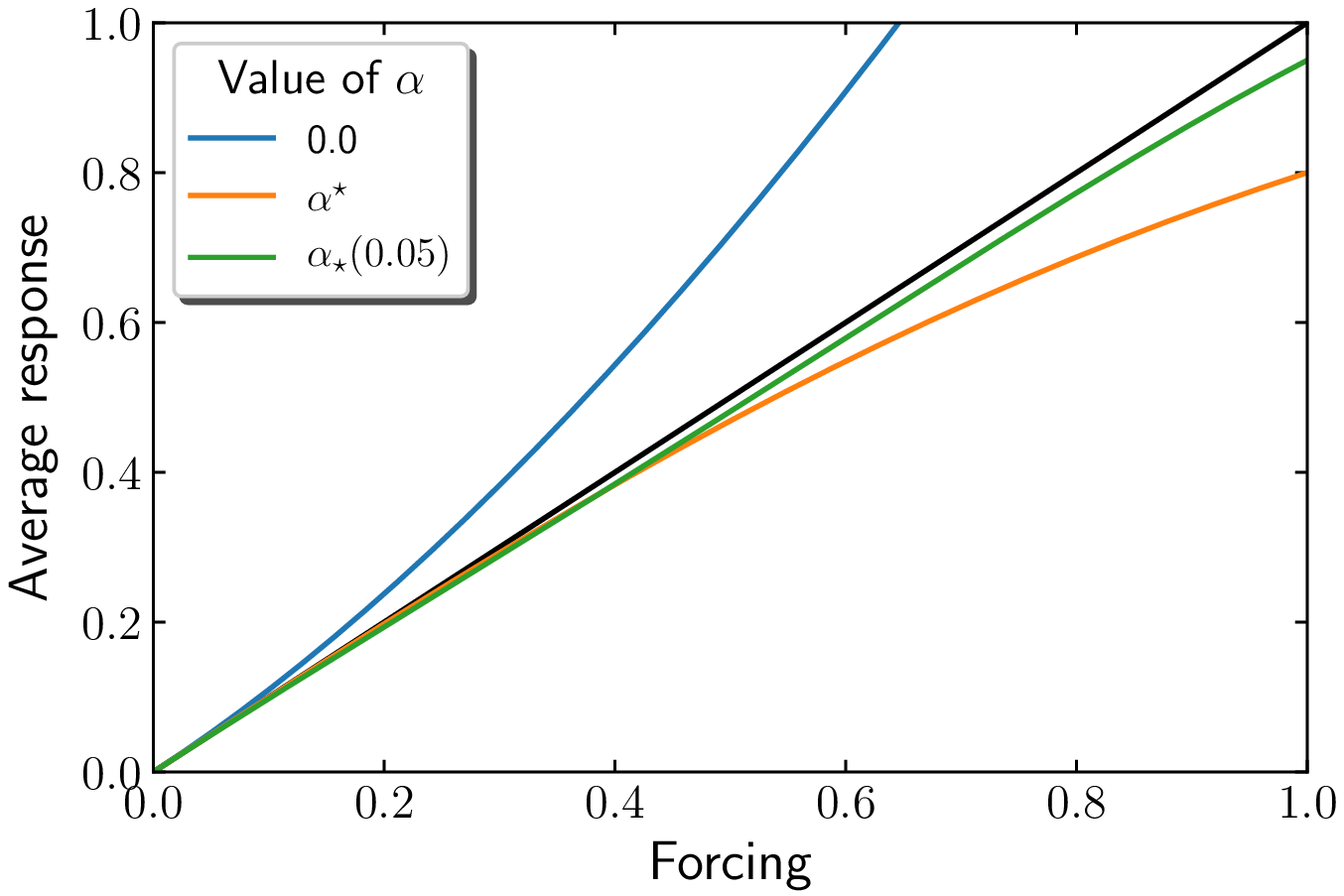}
    \caption{Full response curves $r_\alpha(\eta)$ for various $\alpha$. \\ $\,$}
    \label{subfig:mock1_res}
\end{subfigure}
\hfill
\begin{subfigure}{0.49\textwidth}
    \includegraphics[width=\textwidth]{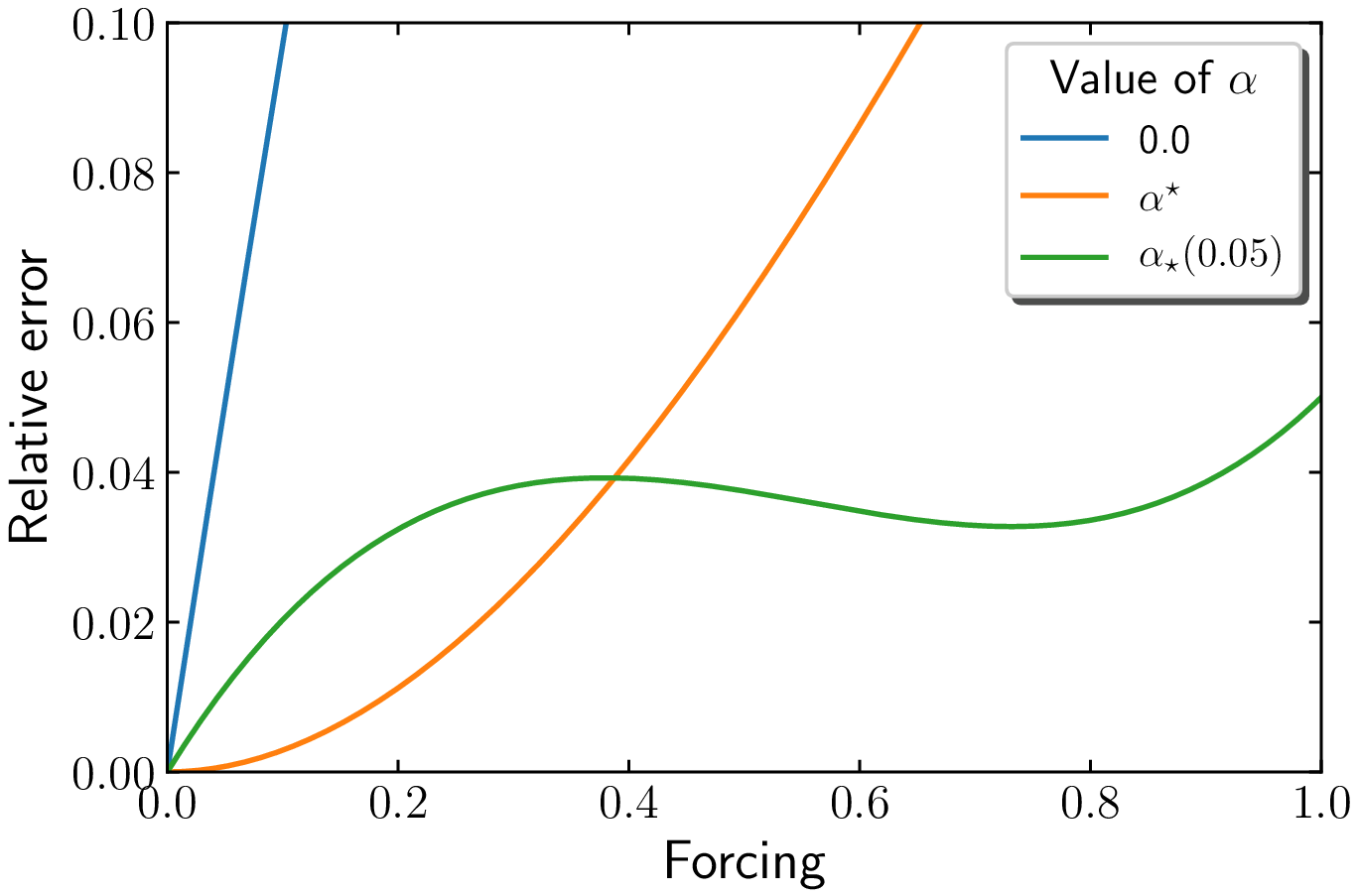}
    \caption{Corresponding relative error curves $\delta_\alpha(\eta)$ relative to the linear response.}
    \label{subfig:mock2_res}
\end{subfigure}
\caption{Illustration of the effect of different $\alpha$'s on the response}
\label{fig:optAlpha}
\end{figure}

In most practical applications, using $\alpha_\star(\eps)$ is the choice of interest, as the points on the response curve are anyway computed with some numerical error, such as timestep discretization and statistical error. There is, however, a tradeoff to be considered. As one tightens the acceptable relative error by decreasing $\eps$, the value of $\eta_{\alpha}(\eps)$ is also decreased. In practice, values of $\eps$ in the range 0.01 to 0.1 are small enough to lead to a bias of a few percent in relative magnitude, but large enough so that the benefit of the increased magnitude of the forcing is significant. We further illustrate the tradeoff in Section \ref{sec:numerical} with numerical results for several observables for overdamped and underdamped Langevin dynamics.

As a final remark in this section, note that computing $\alpha_\star(\eps)$ requires computing the full response curve. Although this is not practical, the aim here is to provide a proof of principle demonstrating the potential computational gains obtained by making use of synthetic forcings and decide, between various strategies, the most promising one in order to adapt the approach to actual systems of interest.

\subsection{Linearly combining extra forcings}
\label{subsec:lin_comb}

As discussed in Section \ref{subsec:choosing_alpha}, with the addition of some extra forcing $\Lx$, it is possible to find a value of $\alpha$ which cancels the second-order response $\rho_2$ due to the linearity of $\f_2$ in $\alpha$, as shown in~\eqref{f2_alpha}. We can extend this notion to linear combinations of extra forcings. In particular, one can combine $k$ forcings in order to cancel the first $k$ nonlinear orders of the response. That is, for $\alpha = (\alpha_1,\dotsc,\alpha_k)$, the synthetic perturbation $\Lp + \alpha_1\widetilde{\L}_\mathrm{extra,1} + \cdots + \alpha_k\widetilde{\L}_\mathrm{extra,k}$ can be used, with $\alpha$ chosen such that $\rho_{2,\alpha}= \cdots = \rho_{k+1,\alpha} = 0$. This is a nonlinear equation in $\alpha$, with $k$ unknowns scalar values and $k$ conditions to solve.

\section{Numerical results}
\label{sec:numerical}
The aim of the numerical illustrations presented in this section is to demonstrate the potential of the synthetic forcing approach, on the examples given in Section \ref{subsec:examples}. The numerical results are obtained by discretizing the PDEs determining the invariant probability measure of each system (namely the Fokker--Planck equation \eqref{FP_eta_alpha}) and the Poisson equation \eqref{poissonf1}. The use of this method, particularly in low-dimensional systems, allows us to extensively and thoroughly examine the quality of synthetic forcings, as we can easily compute full response curves and individual orders of the response, in contrast to Monte Carlo simulations, for which some statistical error and timestep discretization bias are present. This numerical method, however, cannot be used as such for higher dimensional systems, as solving the associated PDEs becomes too cumbersome a task. Thus, for higher dimensional systems, Monte Carlo simulations are usually preferred (see discussion in Section \ref{sec:perspectives}).

We first discuss in Section \ref{subsec:numerical_method} the numerical methods used, then present the numerical results for the one and two-dimensional overdamped Langevin, and one-dimensional Langevin dynamics in Sections~\ref{subsec:ovd_numerical_1d}, \ref{subsec:ovd_numerical_2d} and~\ref{subsec:lang_numerical}, respectively. We finally discuss in Section \ref{subsec:num_var} the impact of the synthetic forcings approach on variance reduction in the estimation of transport coefficients.

\subsection{Numerical method}
\label{subsec:numerical_method}
The full response curve and the linear response are computed by solving the associated Fokker--Planck equation for each dynamics, and the Poisson equation \eqref{poissonf1}, respectively. The main advantage of numerical methods based on solving partial differential equations is that the discretization error can be systematically reduced to a very small value by refining the mesh used to represent the functions at hand. There are two situations in which the solutions to PDEs are required to determine the response to external perturbations, which we outline below.

\paragraph{Approximation of the linear response $\rho_1$} The linear response is obtained by computing $\f_1$ and approximating the integral on the right-hand side of \eqref{linResf1}. To approximate $\f_1$, we solve the Poisson equation \eqref{poissonf1}, which we recall here for convenience:
\begin{equation}
    \Lr^*\f_1 = -\Lp^*\ind.
    \label{poissonf1_recall}
\end{equation}
For overdamped Langevin dynamics, we directly solve \eqref{poissonf1_recall} as we consider a bounded position space, namely the torus $\T^d$. For Langevin dynamics, instead of solving for $\f_1$ in the expansion \eqref{feta}, we solve for $\overline{\psi}_1$ in $\psi_\eta = \psi_0 + \eta\overline{\psi}_1 + \eta^2\overline{\psi}_2 + \cdots$, \emph{i.e.} $\overline{\psi}_1 = \f_1\psi_0$. This function satisfies the Poisson equation
\begin{equation}
	\Lr^\dagger\overline{\psi}_1 = -\Lp^\dagger \psi_0,
	\label{poissonf1_dagger}
\end{equation}
which is equivalent to reformulating \eqref{poissonf1_recall} in terms of the $L^2$-adjoint. Due to the unbounded momentum space for Langevin dynamics, solving \eqref{poissonf1_dagger} makes the problem easier to solve numerically than \eqref{poissonf1_recall}. Indeed, the unbounded momentum space needs to be truncated. A natural choice when solving for $\overline{\psi}_1$ is to set Dirichlet boundary conditions at the boundaries of the domain in $p$, which is consistent with the fact that~$\overline{\psi}_1(q,p)$ is expected to vanish as $|p|\to+\infty$. In contrast, there is no natural boundary condition for $\f_1$ in \eqref{linResf1} when the momentum space is truncated.

Once an approximation of $\f_1$ or $\overline{\psi}_1$ is obtained, we perform a quadrature on one of the following integrals to directly find the linear response
\begin{equation}
    \rho_1 = \int_\mathcal{X} R\f_1 \, \psi_0 = \int_\mathcal{X} R \, \overline{\psi}_1.
    \label{eq135}
\end{equation}
More generally, this procedure can be extended by using the recursive formula \eqref{eq6}, which allows for the computation of response terms of arbitrary orders; this is used in particular to compute $\rho_{2,\text{phys}}$ and $\rho_{2,\text{extra}}$ when computing the value of $\alpha^\star$ defined in \eqref{aOpt1}.

\paragraph{Approximation of the full response $r_\alpha(\eta)$} In this work, we compute the full response in order to quantify how much, and how quickly the response deviates from the linear regime. In actual applications, one typically does not compute the full response curve, in particular when using Monte Carlo simulations. The quantity of interest, namely the transport coefficient, comes from the linear response, which can be obtained from computing a single point (or typically two to ensure linearity), so computing the full response curve is not of interest.
 
The full response curve can be computed by solving the Fokker--Planck equation~\eqref{FP_eta_alpha}, which we recall for convenience
\begin{equation}
	\L_{\eta,\alpha}^\dagger\psi_{\eta,\alpha} = 0,
	\label{FP_eta_alpha_recall}
\end{equation}
and then performing a quadrature on the integral
\begin{equation}
    r_\alpha(\eta) = \int_\mathcal{X} R \, \psi_{\eta,\alpha}.
\end{equation}

\paragraph{Solving the PDE} For the discretization of the PDEs \eqref{poissonf1_recall}, \eqref{poissonf1_dagger} and \eqref{FP_eta_alpha_recall}, we use a finite-difference scheme. Periodic boundary conditions are used in the spatial variable since we always consider $q\in \T^d$ for both overdamped Lanvegin and Langevin dynamics, with $d = 1$ or $d=2$. For overdamped Langevin dynamics, we use a centered finite-difference scheme. For the one-dimensional Langevin dynamics, the momentum variable is first truncated to $[-p_\mathrm{max}, p_\mathrm{max}]$, then its domain is discretized with step size $h_p$. Dirichlet boundary conditions are imposed at $p = \pm p_\mathrm{max}$. We ensured that our truncated value of the momentum $p_\mathrm{max} = 6$ is large enough so that it does not affect the numerical results. A centered scheme is also used for Langevin dynamics, except for the transport term $-p^TM^{-1}\nabla_q$ in the Fokker--Planck equation \eqref{lang_FP_L2}, where an upwind scheme is used (refer to Appendix \ref{appendix:num_schemes} for the precise expressions of the aforementioned numerical schemes). All computations were performed with the \code{Julia} language.

\subsection{Overdamped Langevin dynamics - one-dimensional case}
\label{subsec:ovd_numerical_1d}
We present in this section the numerical results for the one-dimensional overdamped Langevin dynamics \eqref{neld_ovd} on $\mathcal{X} = \T$, with potential energy
\begin{equation}
	V(q) = \cos(2\pi q).
	\label{1D_cos_pot}
\end{equation}
We consider the observable
\begin{equation}
	R(q) = \left(a\cos(2\pi q) + b \sin(2\pi q)\right)\e^{\beta V(q)},
	\label{obs1D}
\end{equation}
where $a,b \in \R$. We choose this observable for two reasons. First, by construction, it has average zero with respect to the Gibbs probability measure, with density proportional to $\e^{-\beta V}$. Second, we can tune the coefficients $a$ and $b$ to control the magnitude of the individual orders of response. This allows us to choose $a$ and $b$ such that the first and second-order responses are normalized, \emph{i.e.} $\rho_1 = \rho_2 = 1$, which makes it easier to compare the quality of each synthetic forcing. 

Due to the symmetries of the potential energy function \eqref{1D_cos_pot}, we can directly compute the values of $a$ and $b$ such that $\rho_1 = \rho_2 = 1$. More precisely, it can be shown that $\f_1$ and $\f_2$ are respectively odd and even on $[-1/2, 1/2]$ so that $b$ controls the magnitude of $\rho_1$ and $a$ controls the magnitude of $\rho_2$. The value of $b$ such that $\rho_1 = 1$ is easily computed using \eqref{rhon_response} to be
\begin{equation}
	b = \left(Z^{-1}\int_\mathcal{X} \sin(2\pi q) \f_1(q) \, dq \right)^{-1}, \qquad Z = \int_\mathcal{X} \e^{-V(q)}dq.
	\label{b_obs_norm}
\end{equation}
Similarly, the value of $a$ such that $\rho_2 = 1$ is
\begin{equation}
	a = \left(Z^{-1}\int_\mathcal{X} \cos(2\pi q) \f_2(q) \, dq\right)^{-1}.
	\label{a_obs_norm}
\end{equation}
The spatial domain $\T = [0,1)$ is discretized into $m = 2000$ points, with uniform step size $h = 1/m$. The simulations were performed with inverse temperature $\beta = 1$ and mass $M = 1$, which is also the setting for the results in two dimensions obtained in Section \ref{subsec:ovd_numerical_2d}. Moreover, we consider the forcing $F=1$, as this is the only nongradient forcing in dimension one on the torus.

We present the full response curves and the associated linear response for each of the three synthetic forcings discussed in Section \ref{subsubsec:ovd_examples}. Figures \ref{ovd_1a}, \ref{ovd_1b} and \ref{ovd_1c} correspond to the Feynman--Kac forcing \eqref{fk_ovd}, modified fluctuation-dissipation \eqref{mfdr_ovd} and divergence-free vector field \eqref{divFree}, respectively. Note that in dimension one, \eqref{divFree} reduces to a single option, namely
\begin{equation}
	\Lx = \e^V\frac{d}{dq},
	\label{div_free_1d}
\end{equation}
which is the one used here. For each of the plots, we show the linear response, the full response curve $r_0(\eta)$ for $\alpha=0$, the response curve for $r_{\alpha^\star}(\eta)$ for $\alpha^\star$ computed using \eqref{aOpt1}, and response curves for some additional values of $\alpha\in\R$ for illustrative purposes. In some cases, in particular when the response curve associated with $\alpha^\star$ sees marginal improvement in extending the linear regime, we also compute the curve for $\alpha_\star(\eps)$, where we choose $\eps=0.05$. Lastly, Figure~\ref{ovd_1d} includes all synthetic forcings. Note that the maximal value of the forcing is much larger for Figure~\ref{ovd_1d}, where we compare the best choices for the magnitude of each synthetic forcing; this is also the setting for the illustrations presented in Sections \ref{subsec:ovd_numerical_2d} and \ref{subsec:lang_numerical}.

\begin{figure}[h]
\centering
\begin{subfigure}{0.49\textwidth}
    \includegraphics[width=\textwidth]{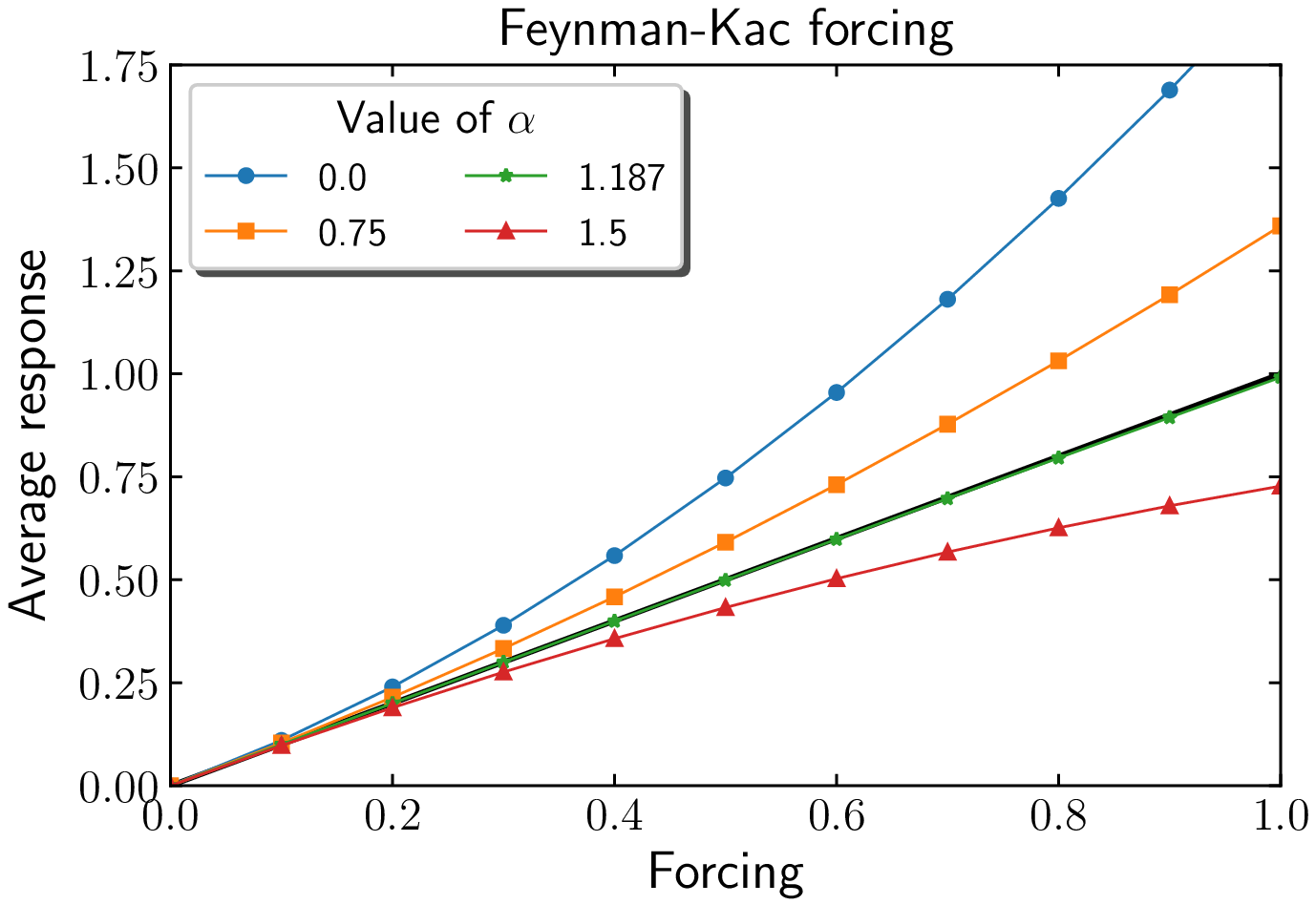}
    \caption{Feynman--Kac forcing, $\alpha^\star = 1.187$.}
    \label{ovd_1a}
\end{subfigure}
\hfill
\begin{subfigure}{0.49\textwidth}
    \includegraphics[width=\textwidth]{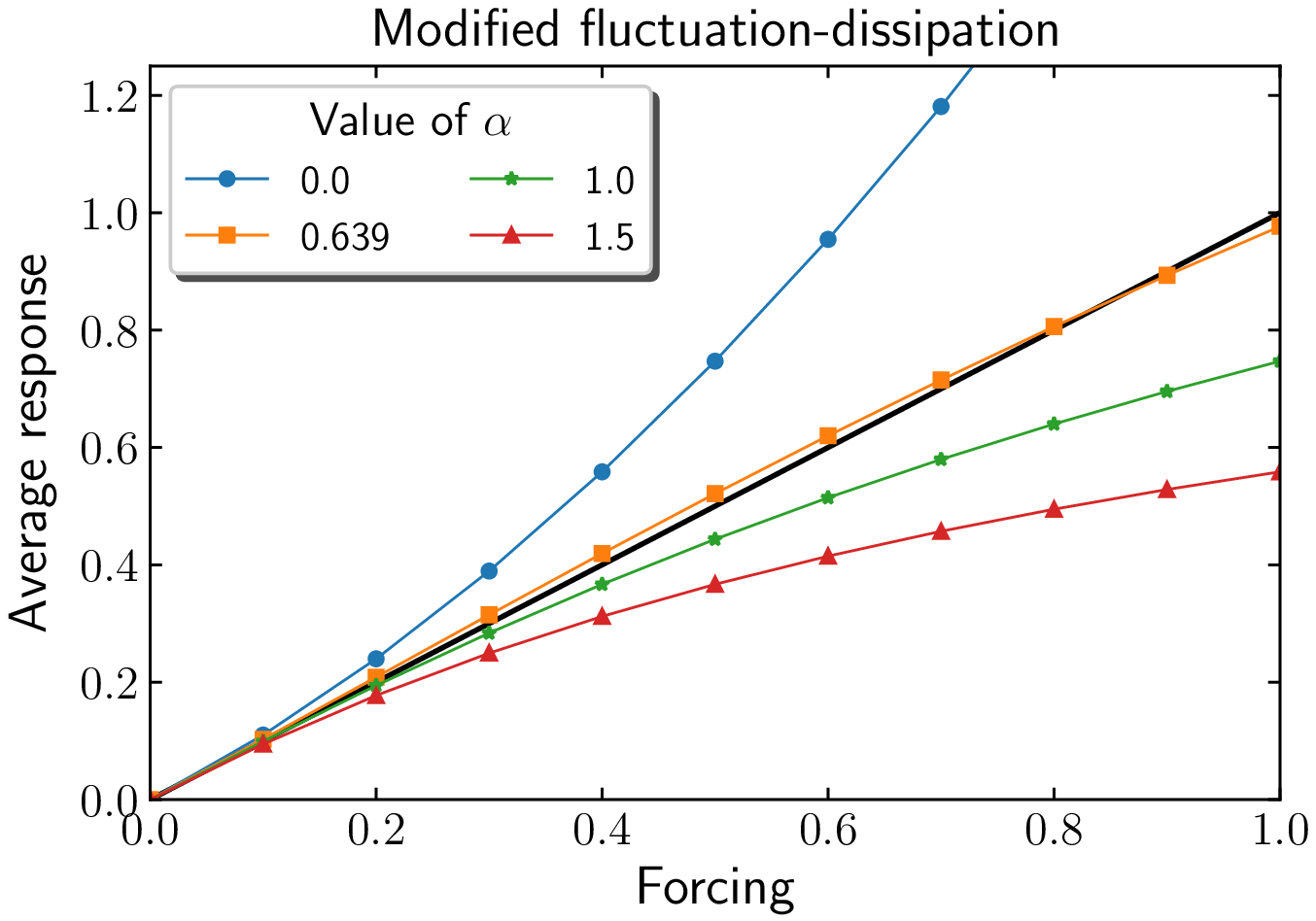}
    \caption{Modified FD, $\alpha^\star = 1.0$.}
    \label{ovd_1b}
\end{subfigure}
\hfill
\begin{subfigure}{0.49\textwidth}
    \includegraphics[width=\textwidth]{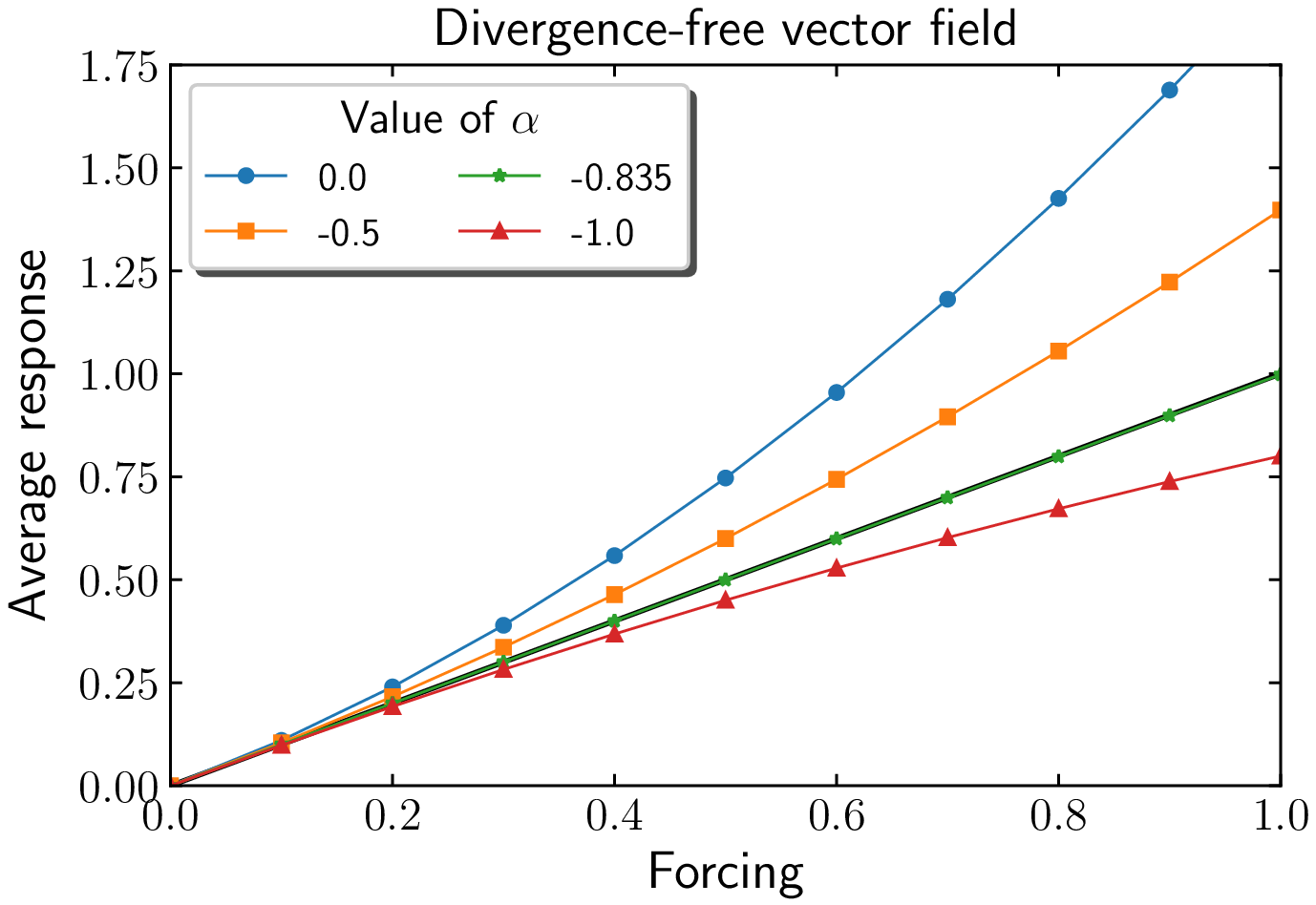}
    \caption{Div-free vector field, $\alpha^\star = -0.835$.}
    \label{ovd_1c}
\end{subfigure}
\hfill
\begin{subfigure}{0.49\textwidth}
    \includegraphics[width=\textwidth]{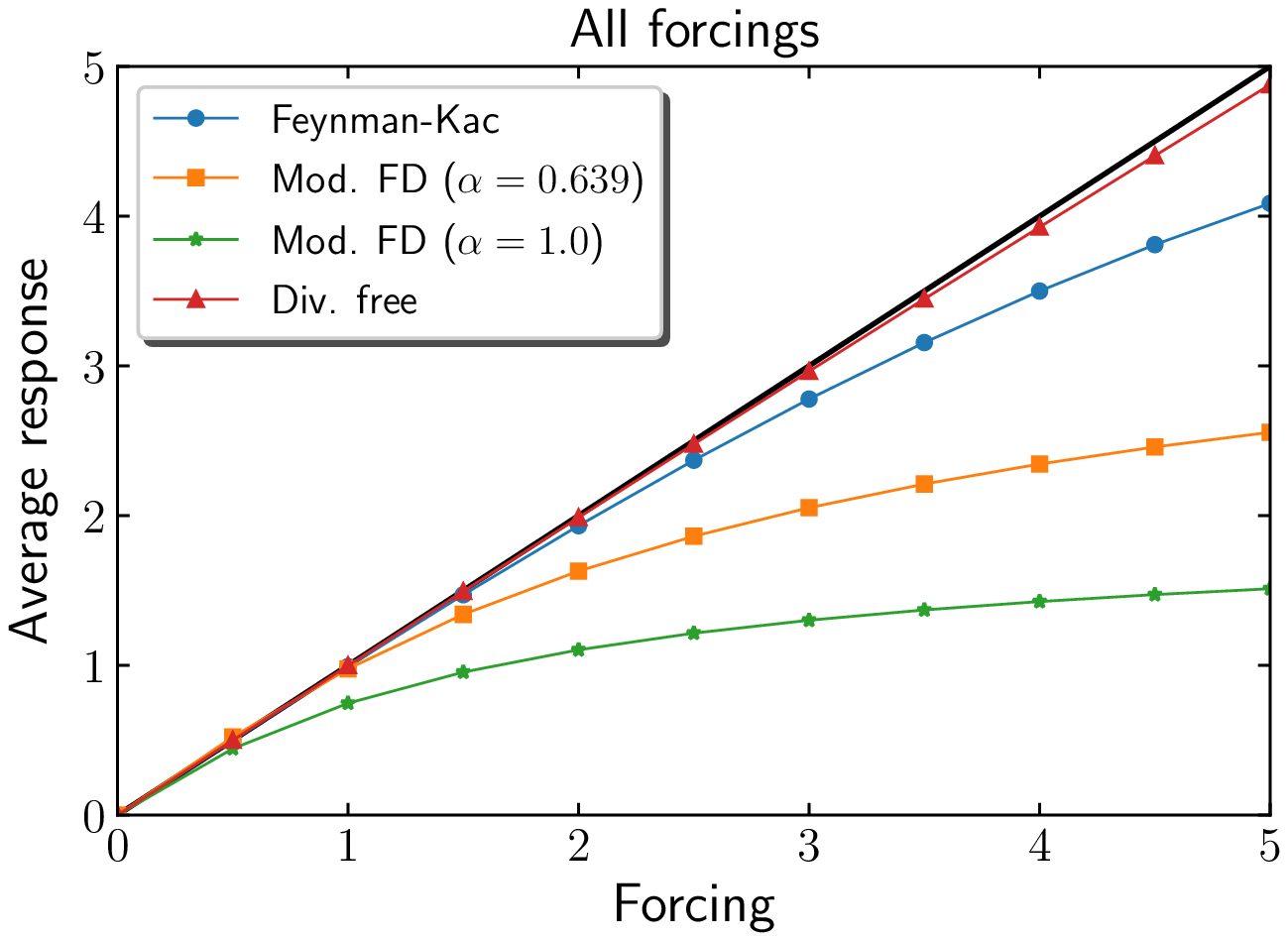}
    \caption{Comparing $\alpha^\star$ for all forcings.}
    \label{ovd_1d}
\end{subfigure}
	\caption{Response curves for various synthetic forcings for the one-dimensional overdamped Langevin dynamics.}
    \label{fig_ovd_1}
\end{figure}

The choice $\alpha^\star$, namely the one that cancels the second order response $\rho_2$, performs quite well for the Feynman--Kac and divergence-free forcings, as seen in Figures \ref{ovd_1a} and \ref{ovd_1c}. It allows to extend the range of linearity for $\eta > 1$, an increase of over tenfold when compared to the original response curve for $\alpha=0$. This allows for a variance reduction of a factor of order 1000 for the estimator \eqref{estimator}, as documented in Section~\ref{subsec:num_var}.

 Figure \ref{ovd_1b} illustrates the discussion of Section \ref{subsec:choosing_alpha} about choosing the values of $\alpha$ allowing to stay longer in an approximate linear response. Although the value of the parameter $\alpha$ such that $\rho_2(\alpha) = 0$ is $\alpha^\star = 1.0$ for the modified fluctuation-dissipation, we see that $\alpha_\star(0.05) = 0.639$ is a much more nicely behaved curve, staying within 5\% relative error for a large $\eta$ regime. This is a consequence of the fact that although $\alpha^\star$ cancels $\rho_2$, it might significantly increase $\rho_3$ and higher order terms.
 
\subsection{Overdamped Langevin dynamics - two-dimensional case}
\label{subsec:ovd_numerical_2d}
We now present the numerical results for overdamped Langevin dynamics in dimension two, defined on $\mathcal{X} = \T^2$, with the following periodic potential energy:
\begin{equation}
    V(q) = \frac{1}{2}\cos(2\pi q_1) + \cos(2\pi q_2) + \kappa\cos\left(2\pi(q_1 - q_2)\right),
\end{equation}
with $\kappa\in\R$. The observable considered here is of the same form as \eqref{obs1D}, and reads
\begin{equation}
	R(q) = \left(a\cos(2\pi q_1) + b \sin(2\pi q_1)\right)\e^{\beta V(q)}.
	\label{obs2D}
\end{equation}
For observables of this form, expressions \eqref{b_obs_norm} and \eqref{a_obs_norm} can be generalized to higher dimensions to compute the normalization constants. As in Section \ref{subsec:ovd_numerical_1d}, this ensures that $\rho_1=\rho_2=1$ when $\kappa=0$. When $\kappa$ is nonzero but not too large, $\rho_1$ and $\rho_2$ are of order 1, which is convenient to observe deviations from the linear regime.

For all numerical results presented here, we considered $\kappa = 0.3$ and a constant nongradient forcing $F = (1,0) \in \R^2$. The spatial domain $\T^2 = [0,1)^2$ was discretized using a regular product mesh of $m_q = 200$ points per dimension.

\begin{figure}[h!]
	\centering
	\begin{subfigure}{0.49\textwidth}
	    \includegraphics[width=\textwidth]{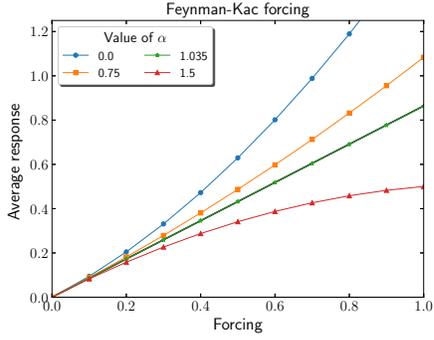}
	    \caption{Feynman--Kac forcing, $\alpha_\star = 1.035$.}
	    \label{ovd_2a}
	\end{subfigure}
	\hfill
	\begin{subfigure}{0.49\textwidth}
	    \includegraphics[width=\textwidth]{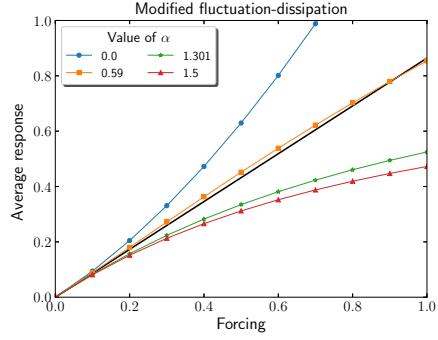}
	    \caption{Modified FD, $\alpha_\star = 1.301$.}
	    \label{ovd_2b}
	\end{subfigure}
	\hfill
	\begin{subfigure}{0.49\textwidth}
	    \includegraphics[width=\textwidth]{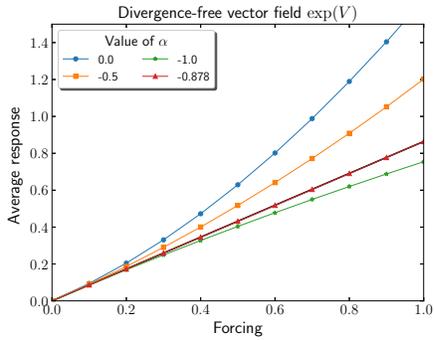}
	    \caption{Div-free vector field $\e^V$, $\alpha_\star = -0.878$.}
	    \label{ovd_2c}
	\end{subfigure}
	\hfill
	\begin{subfigure}{0.49\textwidth}
	    \includegraphics[width=\textwidth]{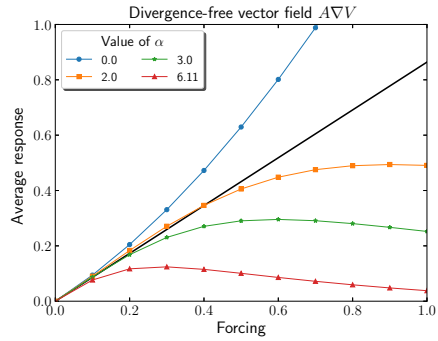}
	    \caption{Div-free vector field $A\nabla V$, $\alpha_\star = 6.11$.}
	    \label{ovd_2d}
	\end{subfigure}
	\hfill
	\begin{subfigure}{0.49\textwidth}
	    \includegraphics[width=\textwidth]{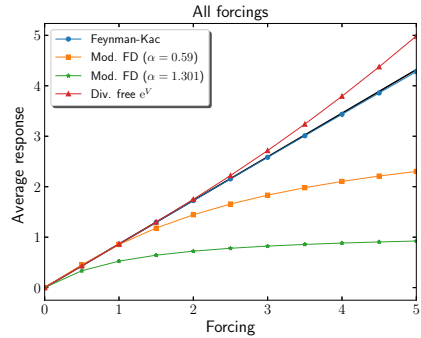}
	    \caption{All forcings.}
	    \label{ovd_2e}
	\end{subfigure}
		\caption{Response curves for various synthetic forcings for the two-dimensional overdamped Langevin dynamics.}
	    \label{fig_ovd_2}
\end{figure}

The results presented in Figure \ref{fig_ovd_2} correspond to response curves for each of the synthetic forcings discussed in Section \ref{subsubsec:ovd_examples}. Figure \ref{ovd_2a} corresponds to the Feynman--Kac forcing~\eqref{fk_ovd}, with~$\xi = (1,0) \in \R^2$. It is observed that this extra forcing is more effective in extending the linear regime when it is in the same direction as the physical perturbation~$F$, hence our choice of~$\xi$. We generalize the exponential divergence-free vector field \eqref{div_free_1d} presented in Section \ref{subsec:ovd_numerical_1d} to higher dimensions as $\Lx = \e^V(\nabla\cdot)$, presented in Figure \ref{ovd_2c}.

Overall, the results are qualitatively similar to those from Section \ref{subsec:ovd_numerical_1d}, apart from two main differences. First, we present a divergence-free vector field of the form~$\nabla V^T A\nabla$, with $A$ the symplectic matrix given by \eqref{symp_mat}, which has an underwhelming impact on increasing the linear regime as seen in Figure \ref{ovd_2d}, even for $\alpha_\star(0.05) = 2.0$. It seems therefore not to be a good option to consider. Second, Figure \ref{ovd_2e} shows that the Feynman--Kac forcing performs better than the exponential divergence-free field. On the other hand, the modified fluctuation-dissipation~\eqref{mfdr_ovd} is once again underwhelming, as seen in Figure~\ref{ovd_2b}, with response curve for $\alpha_\star(0.05) = 0.59$ once again performing better than $\alpha^\star = 1.301$. Note that the value $\alpha_\star(\eps)$ was not computed for the Feynman--Kac and the exponential divergence-free forcings, as~$\alpha^\star$ sufficiently extends the regime of linear response for those cases.

\subsection{Langevin dynamics - one-dimensional case}
\label{subsec:lang_numerical}
We present here the numerical results associated with the one-dimensional Langevin dynamics \eqref{lang_noneq}, where the potential energy function is the same as the one used for the one-dimensional overdamped Langevin dynamics case, namely \eqref{1D_cos_pot}, and the same observable \eqref{obs1D} as in Section \ref{subsec:ovd_numerical_1d}. The normalization constants $a$ and $b$ for $\rho_1$ and $\rho_2$ are chosen to be
\begin{equation}
	b = \left(\frac{1}{Z\sqrt{2\pi/\beta}}\int_{\mathcal{X}\times \R^d} \sin(2\pi q) \f_1(q,p) \exp(-\beta p^2/2) dp \, dq\right)^{-1},
\end{equation}
\begin{equation}
	a = \left(\frac{1}{Z\sqrt{2\pi/\beta}}\int_{\mathcal{X}\times \R^d} \cos(2\pi q) \f_2(q,p) \exp(-\beta p^2/2) dp \, dq\right)^{-1},
\end{equation}
with $Z$ the same normalization constant as in Section \ref{subsec:ovd_numerical_1d}, defined in \eqref{b_obs_norm}. The spatial domain $\T = [0,1)$ was discretized using $m_q = 200$ points, with uniform step size~$h_q = 1/m_q$. The unbounded momentum space was truncated to $[-p_\mathrm{max},p_\mathrm{max}]$ with $p_\mathrm{max}=6.0$, with Dirichlet boundary conditions at $p=\pm p_\mathrm{max}$, and then discretized into $m_p = 1000$ points with uniform step size~$h_p = 2p_\mathrm{max}/(m_p-1)$. The simulations were performed with $\beta=\gamma=M=1$. Note that when computing the response associated with the divergence-free vector field for large values of $\eta$, as presented in Figure \ref{fig:lang_1e}, the momentum space is truncated to~$p_\mathrm{max} = 10$.

The results presented in Figure \ref{fig:fig_lang} correspond to the response curves for each of the synthetic forcings discussed in Section \ref{subsubsec:lang_examples}. Figures \ref{fig:lang_1a} and \ref{fig:lang_1b} correspond to the position and momentum components of the modified fluctuation-dissipation forcing presented in Example \ref{ex:mod_fd_lang}, respectively, namely
\begin{equation}
	\Lx = -\partial_q^*\partial_q, \qquad \Lx = -\partial_p^*\partial_p.
\end{equation}
Figures \ref{fig:lang_1c} and \ref{fig:lang_1d} correspond to the position and momentum components of the Feynman--Kac forcing presented in Example \ref{ex:FK_lang}, respectively, namely
\begin{equation}
	\Lx = \partial_q^* = V' - \partial_q, \qquad \Lx = \partial_p^* = p - \partial_p.
\end{equation}
Lastly, Figure \ref{fig:lang_1e} corresponds to the divergence-free forcing \eqref{div_free_1d}.

\begin{figure}[h!]
\centering
\begin{subfigure}{0.49\textwidth}
    \includegraphics[width=\textwidth]{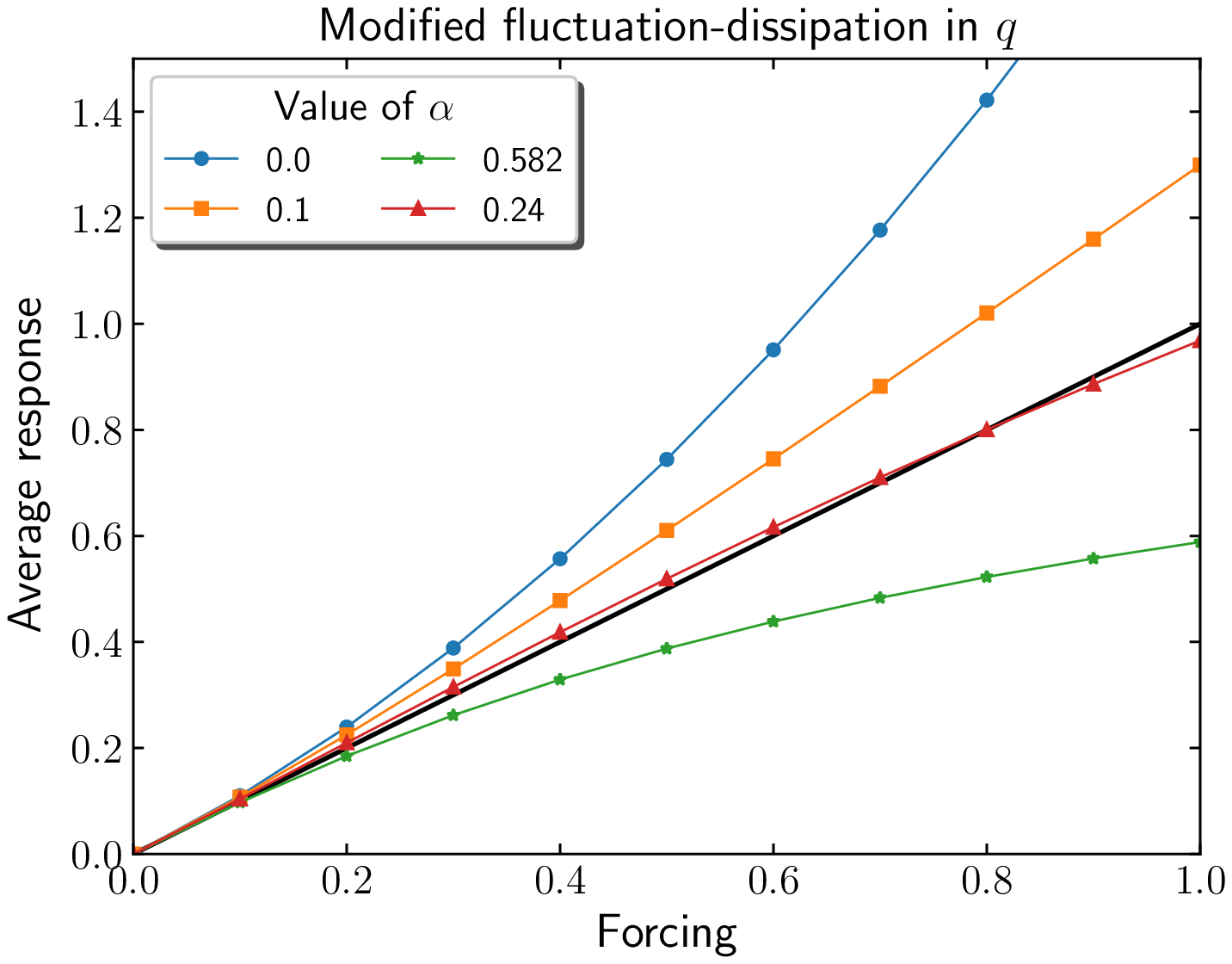}
    \caption{Modified FD in $q$, $\alpha_\star = 0.582$.}
    \label{fig:lang_1a}
\end{subfigure}
\hfill
\begin{subfigure}{0.49\textwidth}
    \includegraphics[width=\textwidth]{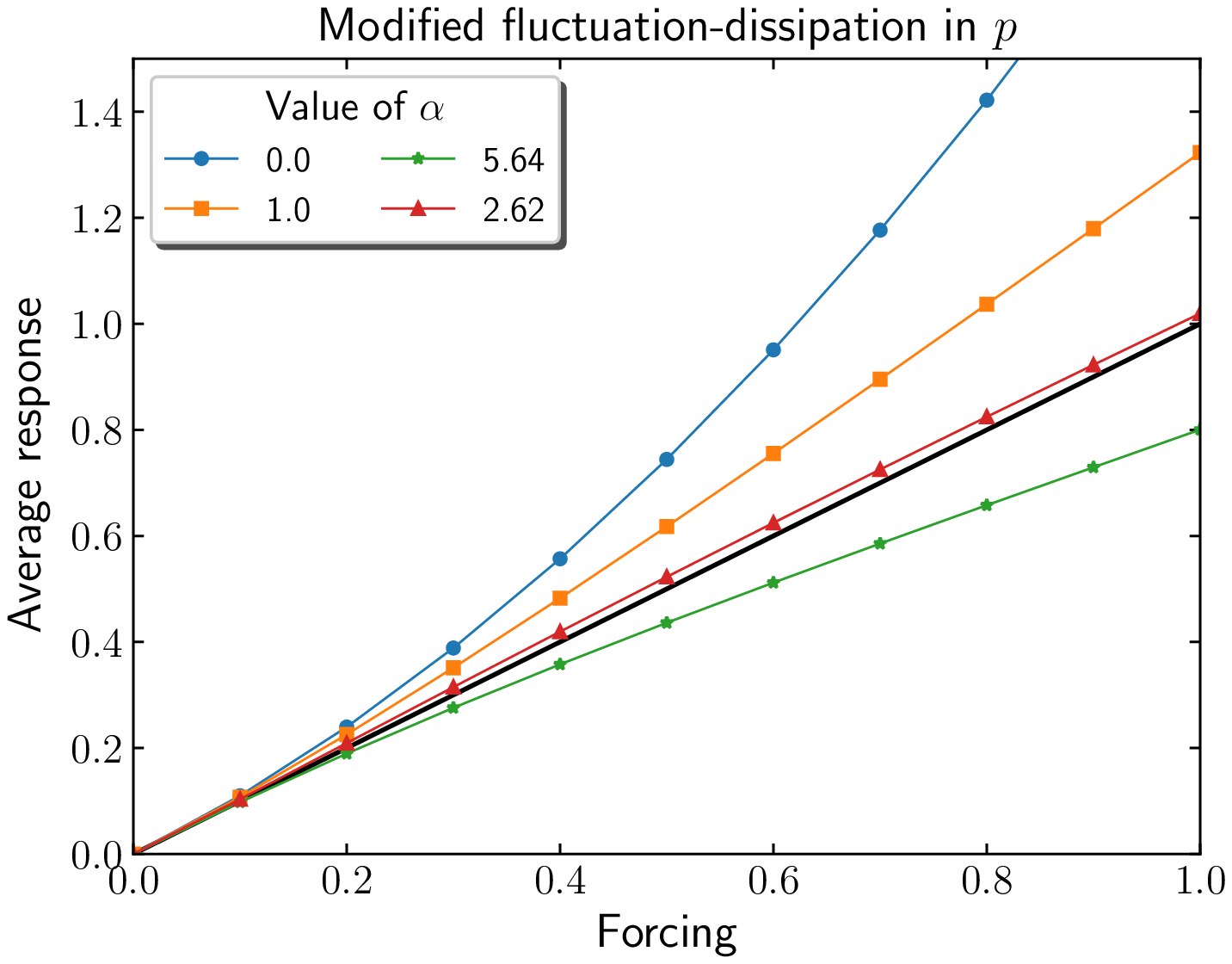}
    \caption{Modified FD in $p$, $\alpha_\star = 5.640$.}
    \label{fig:lang_1b}
\end{subfigure}
\hfill
\begin{subfigure}{0.49\textwidth}
    \includegraphics[width=\textwidth]{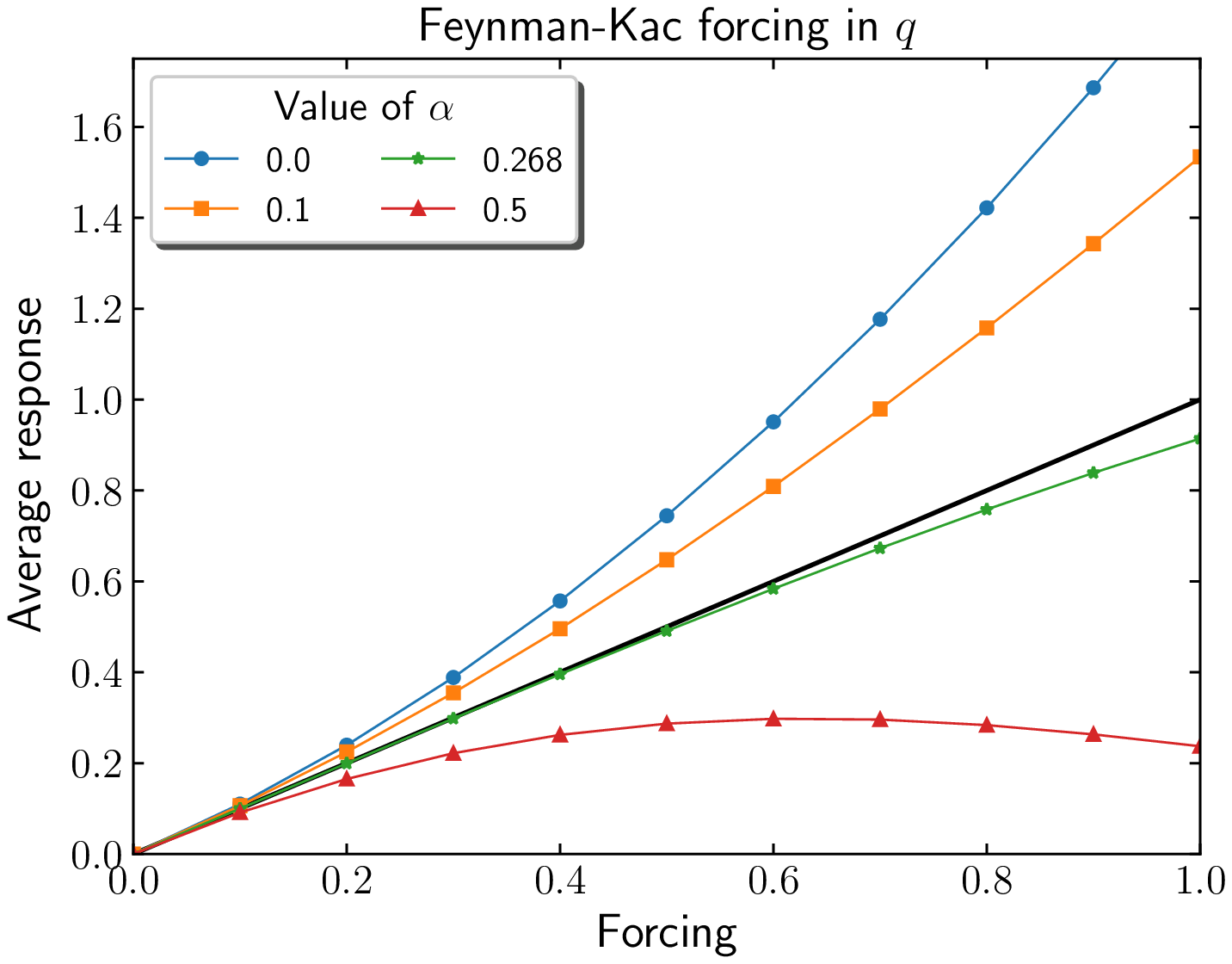}
    \caption{Feynman--Kac forcing in $q$, $\alpha_\star = 0.268$.}
    \label{fig:lang_1c}
\end{subfigure}
\hfill
\begin{subfigure}{0.49\textwidth}
    \includegraphics[width=\textwidth]{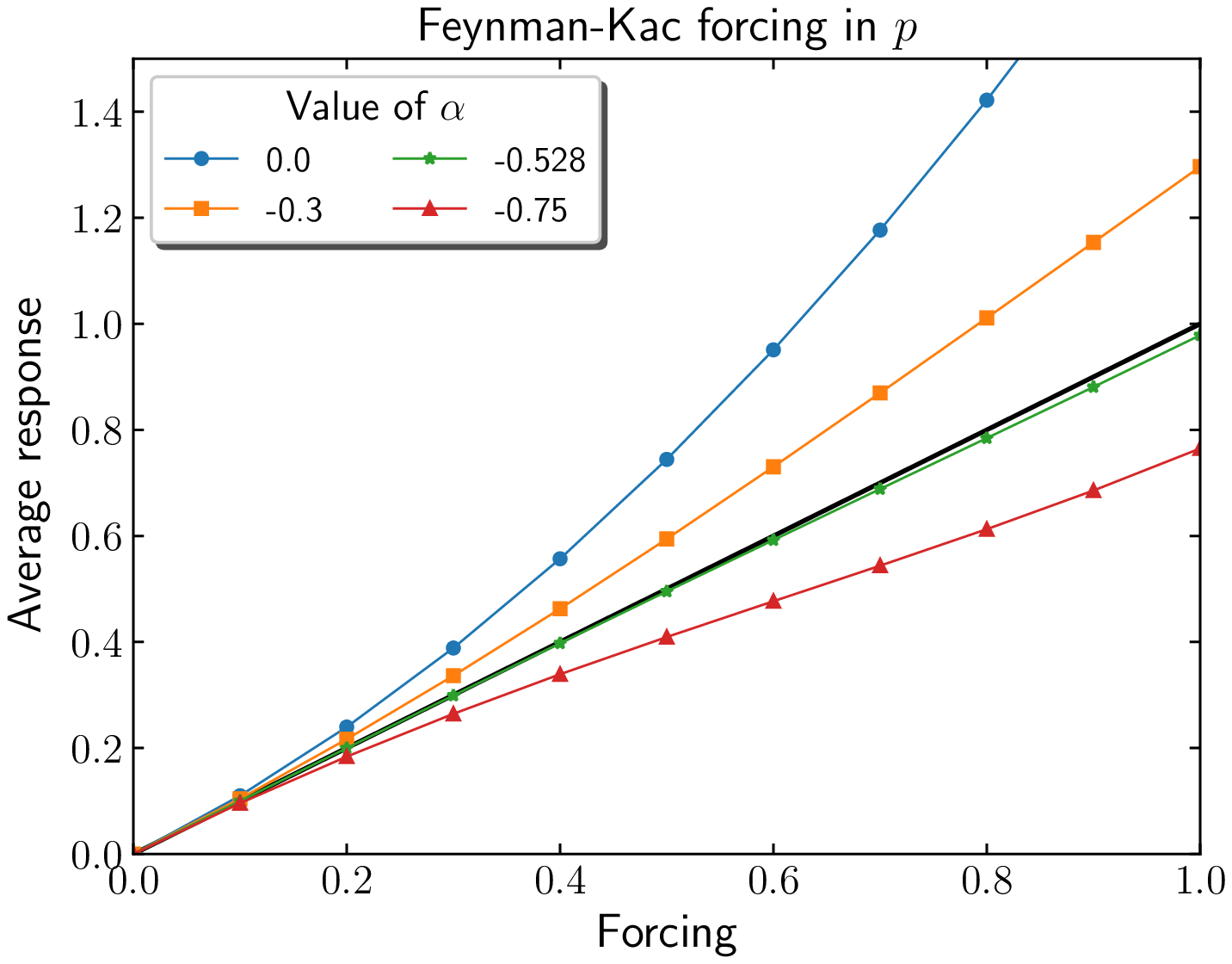}
    \caption{Feynman--Kac forcing in $p$, $\alpha_\star = -0.528$.}
    \label{fig:lang_1d}
\end{subfigure}
\hfill 
\begin{subfigure}{0.49\textwidth}
    \includegraphics[width=\textwidth]{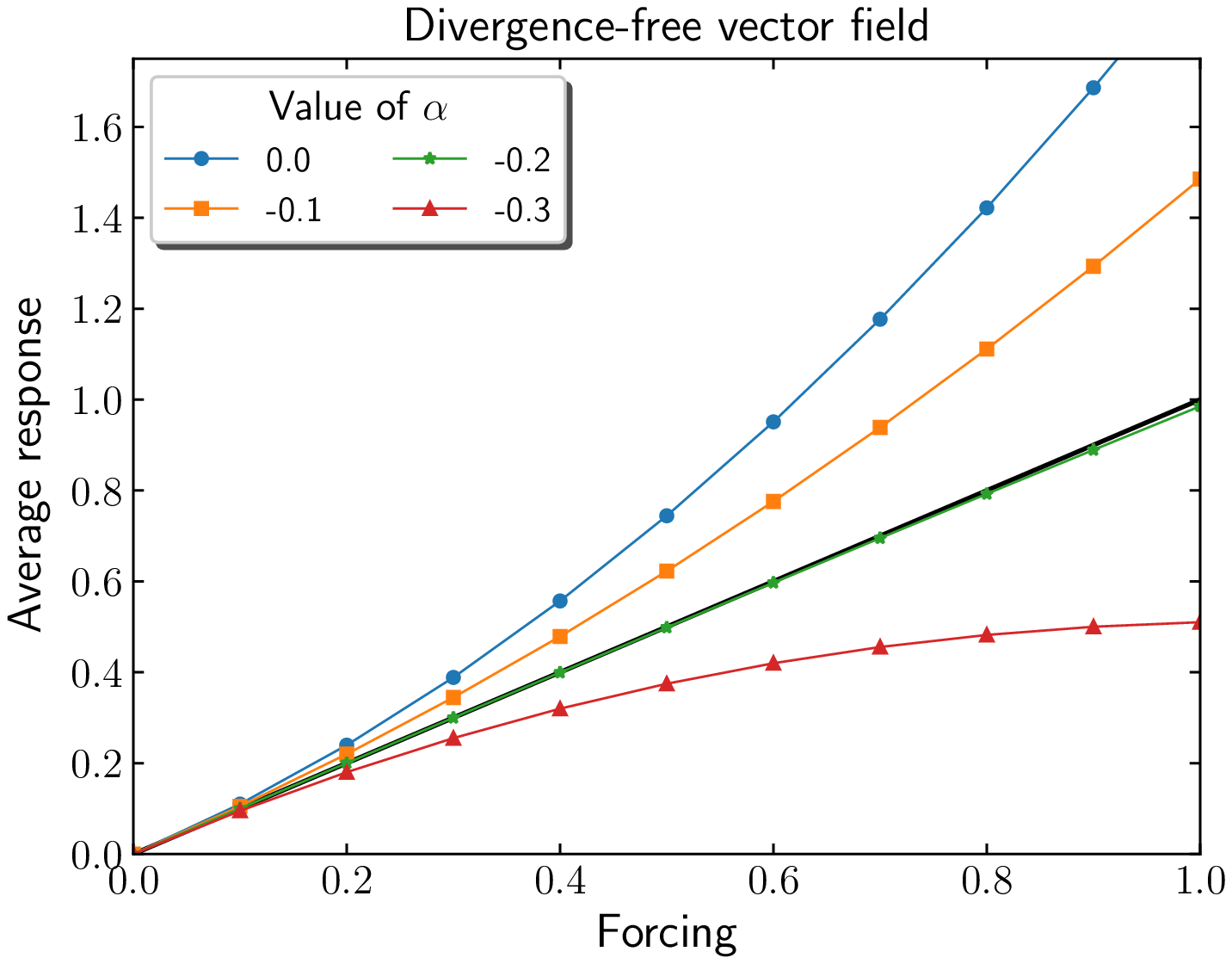}
    \caption{Div-free vector field, $\alpha_\star = -0.2$.}
    \label{fig:lang_1e}
\end{subfigure}
\hfill 
\begin{subfigure}{0.49\textwidth}
    \includegraphics[width=\textwidth]{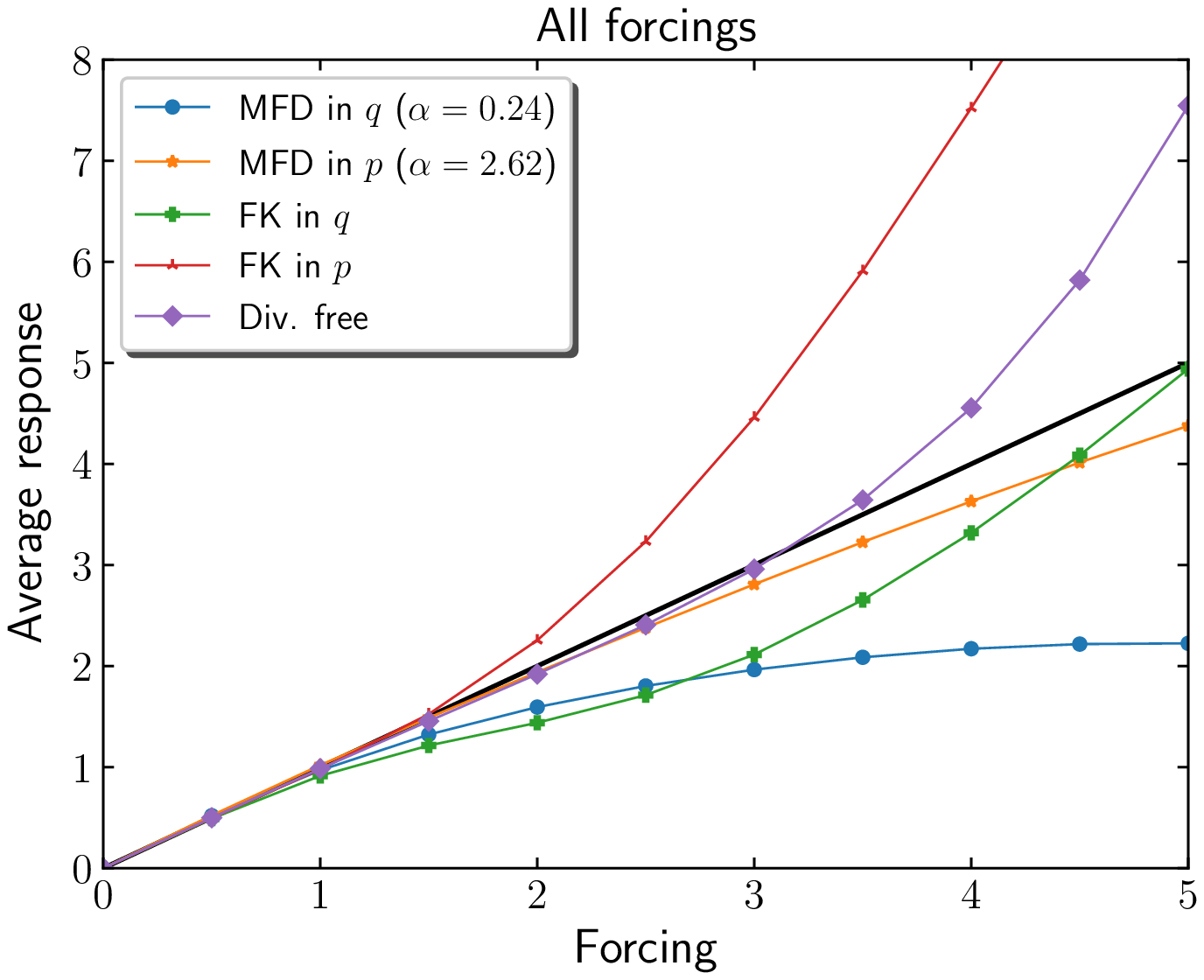}
    \caption{All forcings.}
    \label{fig:lang_1f}
\end{subfigure}
\hfill
\caption{Response curves for various synthetic forcings for the one-dimensional Langevin dynamics.}
\label{fig:fig_lang}
\end{figure}

For both modified fluctuation-dissipation forcings, Figures \ref{fig:lang_1a} and \ref{fig:lang_1b}, we see that~$\alpha^\star$ does not preserve linearity for a large $\eta$ regime, suggesting instead that $\alpha_\star(\eps)$ is the better choice for practical applications; the same conclusion was drawn for the modified fluctuation-dissipation forcing for overdamped Langevin in Section \ref{subsec:ovd_numerical_1d}. Among the two forcings, it seems better to modify the fluctuation-dissipation in $p$ as the response remains longer in the linear response regime for the optimal value $\alpha_\star(\eps)$.

Both Feynman--Kac forcings showcase great potential in extending the linear regime. Once again, Figures \ref{fig:lang_1d} and \ref{fig:lang_1f} suggest that opting for the extra forcing in the $p$ variable is the superior choice.

Lastly, the divergence-free forcing greatly increases the linear regime, as seen in Figures \ref{fig:lang_1e} and \ref{fig:lang_1f}. This behavior, as well as the ease to implement it in Monte Carlo simulations, make it is the most appealing choice of forcing, allowing to substantially reduce the variance of the estimator \eqref{estimator}, as discussed in Section \ref{subsec:num_var}. Although the Feynman--Kac extra forcing also demonstrates great potential in extending the linear regime, the challenges associated with its implementation render it an impractical choice.

\subsection{Scaling of the variance}
\label{subsec:num_var}
We discuss in this section the scaling of the variance $\sigma^2_{R,\eta}/\eta^2$, defined in \eqref{variance}, with the addition of synthetic forces, and in particular numerically illustrate the potential of synthetic forcings as a tool for variance reduction. See Appendix \ref{appendix:num_schemes} for details on the numerical computation of \eqref{variance}. We emphasize that the bias on the estimator \eqref{estimator}, made precise in \eqref{rhoHat_bias}, is negligible when compared to the variance, hence we concentrate on the variance. We anyway want to remain in the linear response regime so, by construction, the bias should be small.

Recall from Section \ref{subsec:analysis}, and in particular in Proposition \ref{proposition1}, that the estimator~$\widehat{\Phi}_{\eta, t}$ defined in \eqref{estimator} has asymptotic variance of order $\sigma^2_{R,\eta}/\eta^2$. Indeed, \eqref{eq9_sticky} suggests that this asymptotic variance is of the same order as the one associated with the equilibrium estimator, \emph{i.e.} $\sigma^2_{R,0}/\eta^2$, up to a small bias of order $\eta$:
\begin{equation}
	\frac{\sigma^2_{R,\eta}}{\eta^2} = \frac{\sigma^2_{R,0}}{\eta^2} + \bigO\left(\frac{1}{\eta}\right).
	\label{var_approx}
\end{equation}
This suggests that the asymptotic variance $\sigma^2_{R,\eta}$ has sufficiently small variations in $\eta$, which validates increasing $\eta$ as a way to substantially reduce the variance.

For each synthetic forcing and a fixed value of $\alpha$, we compute $\sigma^2_{R,\eta}/\eta^2$ with $\eta = \eta_{\alpha}(\eps)$ defined in \eqref{aOpt2}, \emph{i.e.} the first value of $\eta$ at which the response curve departs~$\eps$ in relative error relative to the linear response. For the results here presented, we use~$\eps = 0.05$. For each of the dynamics, the scaled asymptotic variance as a function of $\eta$ is illustrated in Figure \ref{fig:all_var}, where the values of $\eta_{\alpha}(\eps)$ are represented as dashed vertical lines to highlight the great reduction in variance potential. As expected, Figure \ref{fig:all_var} is consistent with \eqref{var_approx}, and it shows that increasing the regime of linear response has a dramatic effect on the variance.

In order to quantity the variance reduction, we define the gain as the ratio of the variance of the equilibrium system and its synthetic counterpart:
\begin{equation}
	\mathrm{gain} = \left(\frac{\sigma^2_{R,\eta_0(\eps)}}{\eta_0(\eps)^2}\right)\left(\frac{\sigma^2_{R,\eta_\alpha(\eps)}}{\eta_\alpha(\eps)^2}\right)^{-1}.
	\label{gain}
\end{equation}
For each of the cases presented in Table \ref{tab:var_gain}, namely the modified-fluctuation dissipation (MFD), Feynman--Kac forcing (FK) and divergence-free vector field (DF), $\alpha$ is chosen to be $\alpha^\star$ or $\alpha_\star(\eps)$, and the associated quantity $\eta_\alpha(\eps)$ is computed, where $\eta_0(\eps) = 0.05$ for all three dynamics. Note that for Langevin dynamics, the results here presented for the modified fluctuation-dissipation and the Feynman--Kac forcing both correspond to their $p$ counterparts.

\begin{figure}[tbhp]
\centering
\begin{subfigure}{0.49\textwidth}
    \includegraphics[width=\textwidth]{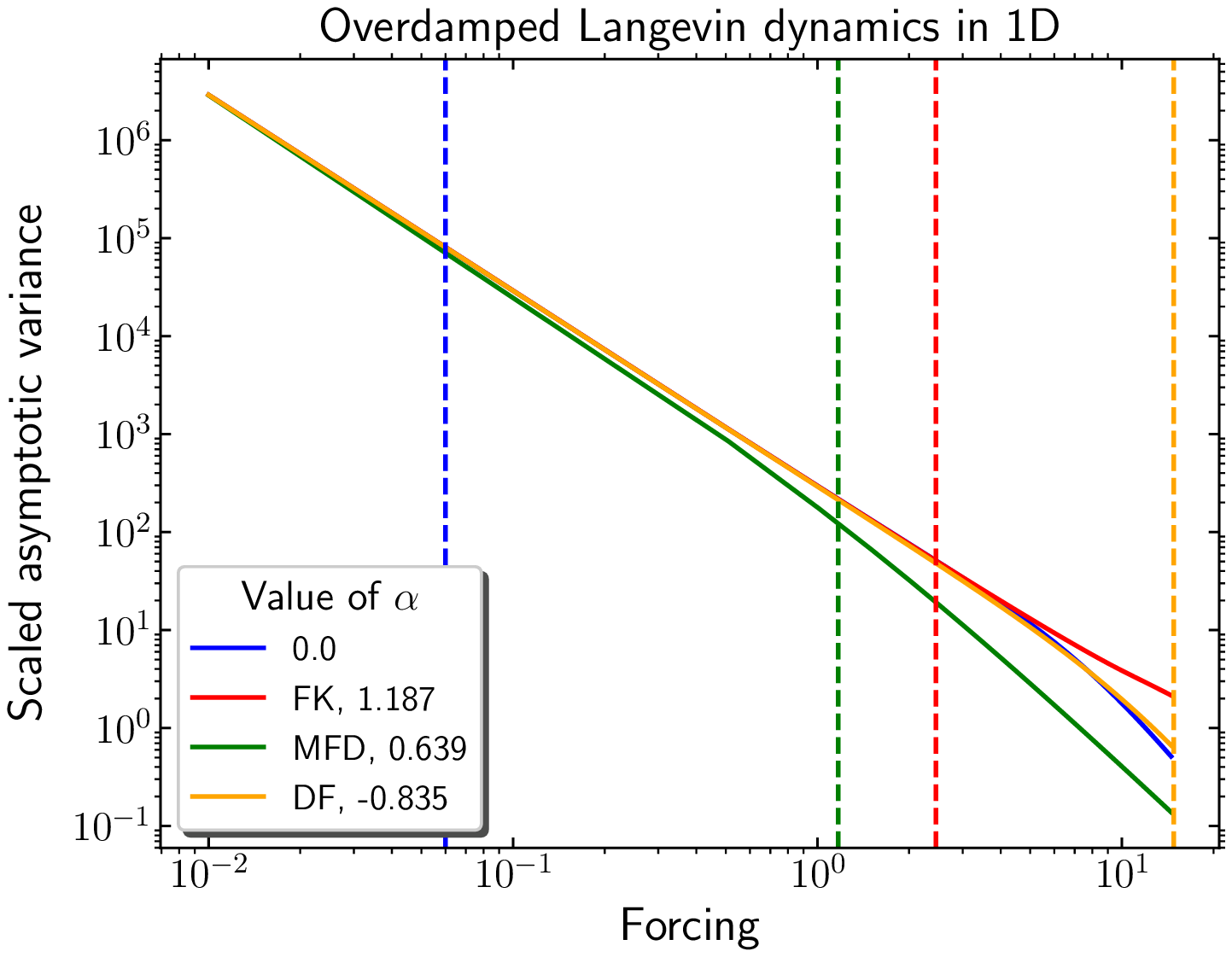}
    \caption{Overdamped Langevin dynamics in 1D.}
    \label{ovd_1d_var}
\end{subfigure}
\hfill
\begin{subfigure}{0.49\textwidth}
    \includegraphics[width=\textwidth]{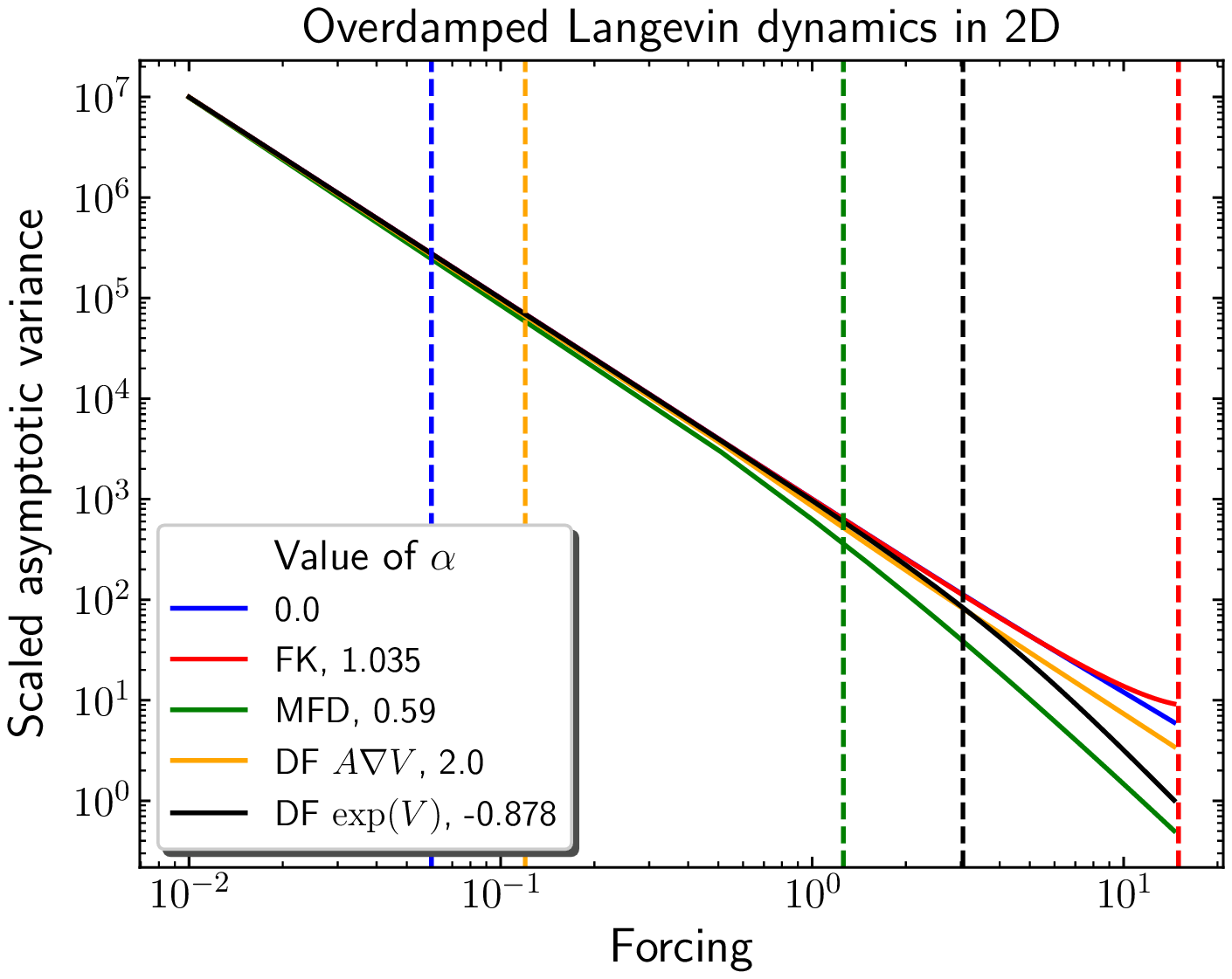}
    \caption{Overdamped Langevin dynamics in 2D.}
    \label{ovd_2d_var}
\end{subfigure}
\hfill
\begin{subfigure}{0.49\textwidth}
    \includegraphics[width=\textwidth]{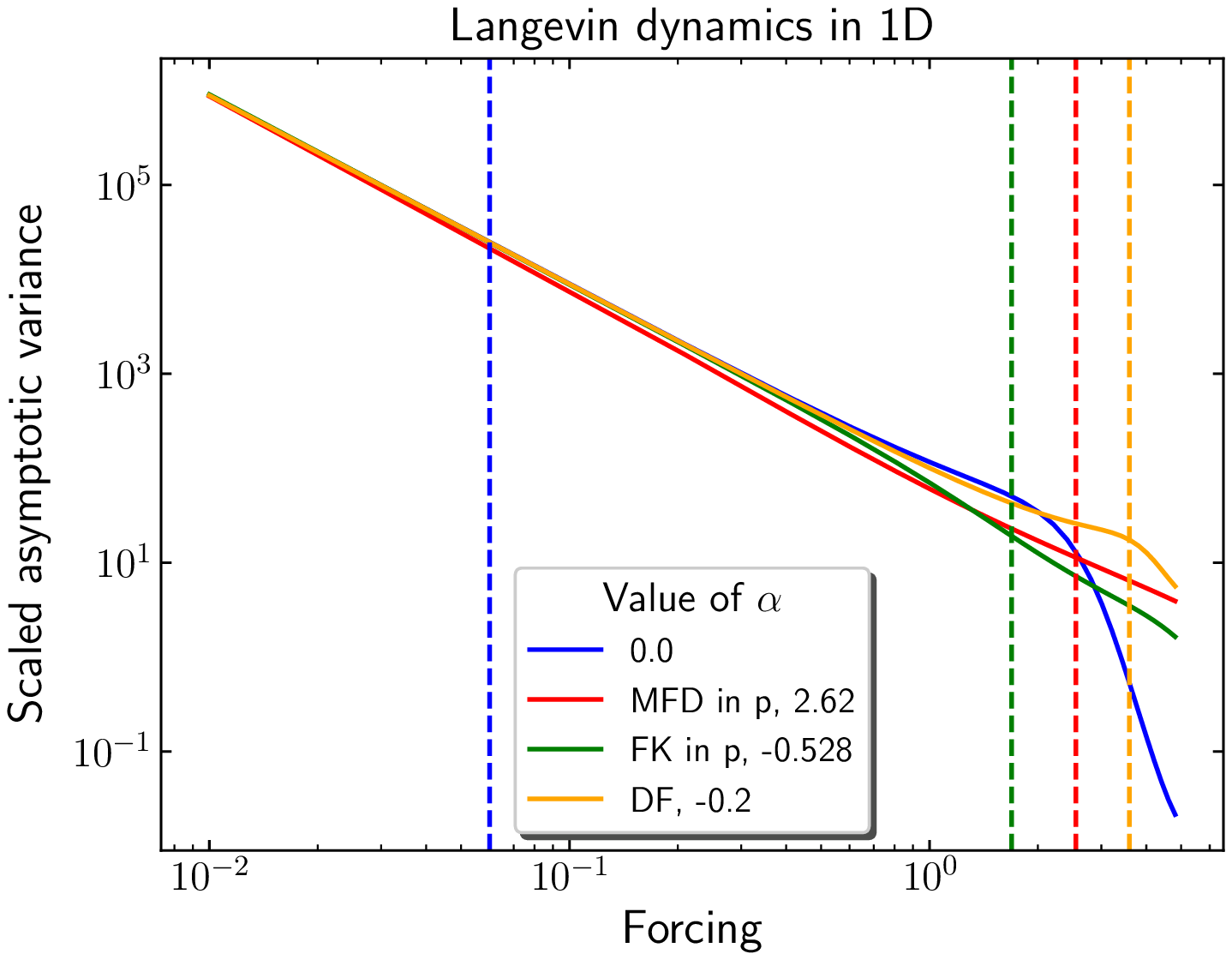}
    \caption{Langevin dynamics in 1D.}
    \label{lang_1d_var}
\end{subfigure}
\caption{Scaled asymptotic variance $\sigma^2_{R,\eta}/\eta^2$ as a function of $\eta$ for various synthetic forcings for all dynamics.}
\label{fig:all_var}
\end{figure}

\begin{table}[tbhp]
	\begin{center}
		\begin{tabular}{cccccc} 
		\hline\hline 
		\multirow{2}{*}{{\bf Dynamics}} & \multicolumn{5}{c}{{\bf Extra forcing}} \\
		\cline{2-6}
		 & none & MFD & FK & DF ($\e^V$) & DF ($A\nabla V$) \\
	    \hline 
		Ovd. 1D & 1 & $\sn{6.56}{2}$ & $\sn{1.58}{3}$ & $\sn{1.28}{5}$ & - \\
		Ovd. 2D & 1 & $\sn{7.65}{2}$ & $\sn{3.23}{4}$ & $\sn{3.33}{3}$ & $\sn{4.03}{0}$ \\ 
		Lang. 1D & 1 & $\sn{2.18}{3}$ & $\sn{1.29}{3}$ & $\sn{1.42}{3}$ & - \\
		\hline 
		\end{tabular}
	\end{center}
	\caption{Impact of synthetic forcings on variance reduction: gain \eqref{gain} for various synthetic forcings for each dynamics.}
	\label{tab:var_gain}
\end{table}

The results in Table \ref{tab:var_gain} show that the variance can be dramatically reduced with the use of synthetic forcings. Although no choice of extra forcing is universally better, we can achieve reduce the variance by a factor of over $1000$ in all cases.

\section{Extensions and perspectives}
\label{sec:perspectives}
Let us conclude this work by discussing potential extensions of our approach and practical applications to real molecular dynamics systems. The notion of synthetic forcings, as here presented, can be applied to a number of actual systems to assist in the computation of transport coefficients, \emph{e.g.} Lennard--Jones fluids for the computation of shear viscosity, or systems of atom chains for the computation of thermal transport (see \cite{lepri2003,dhar2008,lepri2016}).

The methodology here presented, however, must undergo some adaptation to be applied to actual systems. The numerical method we rely on, namely discretizing and solving the associated PDEs, does not scale well to higher dimensions and thus cannot be used as such. One typically relies on Monte Carlo simulations, which normally limits the number of values of $\eta$ used due to the high computational cost. Additionally, the computation of the optimal values of $\alpha$, presented in Section~\ref{subsec:choosing_alpha}, also cannot be done as such, as it relies on either the PDE approach, or the computation of full response curves.

One preliminary idea for the computation of $\alpha$ in real systems is the notion of \emph{prescreening}. Transport coefficients are intensive quantities, \emph{i.e.} they do not depend too drastically on the system size. This suggests, in practice, that one performs two simulations with small system sizes, with $\alpha_1 \ne \alpha_2$, from which the value of $\alpha^\star$ can be extrapolated, which can then be used in a large scale simulation. The results here presented suggest that such an adaptation, which is work in progress, is worth the effort.


\appendix
\section{Analysis of eigenvalues for Feynman--Kac forcings}
\label{app:FK_eig}
As discussed in Section \ref{subsubsec:gen_extra_forcings}, the operator $\L_{\eta,\alpha} = \Lr + \eta(\Lp + \alpha\Lx)$ admits a nonzero principal eigenvalue $\lambda_{\eta,\alpha}$ when $\Lx$ corresponds to the Feynman--Kac forcing, discussed in Examples \ref{ex:FK_ovd} and \ref{ex:FK_lang}. As a consequence, the Poisson equation \eqref{eq6} must be reformulated for such operators. In this section, we start by formally showing that $\lambda_{\eta,\alpha}$ is of order $\eta^2$, then we discuss the reformulation of \eqref{eq6}, in particular to compute $\alpha^\star$. This analysis could be made precise by adapting the approach of \cite{ferre_stoltz_2019}.

\paragraph{Quantifying the magnitude of $\lambda_{\eta,\alpha}$} For a general perturbed dynamics, we write the generator $\L_{\eta,\alpha}$ as $\L_{\eta,\alpha} = \Lr + \eta(\Lp + \alpha\Lx)$, with $\Lx = \xi^T\nabla^*$ for Feynman--Kac forcings. The associated Fokker--Planck equation is then
\begin{equation}
		\left(\Lr + \eta \Lp + \alpha\eta\Lx\right)^\dagger \psi_{\eta,\alpha} = \lambda_{\eta,\alpha}\psi_{\eta,\alpha}.
		\label{FP_FK}
\end{equation}
For a fixed value of $\alpha$, formally expanding $\psi_{\eta,\alpha}$ and $\lambda_{\eta,\alpha}$ in powers of $\eta$ yields
\begin{equation}
	\psi_\eta = \psi_0 + \eta\overline{\psi}_1 + \eta^2\overline{\psi}_{2,\alpha} + \cdots, \qquad \lambda_{\eta,\alpha} = \eta\overline{\lambda}_{1,\alpha} + \eta^2\overline{\lambda}_{2,\alpha} + \cdots.
	\label{psi_lambda_exps}
\end{equation}
Note that $\overline{\psi}_1$ has no $\alpha$ dependency, since the addition of $\alpha\Lx$ leaves $\f_1$ invariant due to \eqref{synthetic_condition}, as discussed in Section \ref{subsec:notion}. 

We substitute the expansions \eqref{psi_lambda_exps} in the Fokker--Planck equation \eqref{FP_FK}. Identifying terms with the same orders in $\eta$ leads to
\begin{align}
	\bigO(\eta):& \quad \Lr^\dagger\overline{\psi}_1 + \left(\Lp^\dagger + \alpha\Lx^\dagger\right) \psi_0 - \overline{\lambda}_{1,\alpha}\psi_0 = 0, \\
	\bigO(\eta^2):& \quad \Lr^\dagger\overline{\psi}_{2,\alpha} + \left(\Lp^\dagger + \alpha\Lx^\dagger\right) \overline{\psi}_1 - \overline{\lambda}_{1,\alpha}\overline{\psi}_1 - \overline{\lambda}_{2,\alpha}\psi_0 = 0. \label{Oeta2_exp}
\end{align}
From the $\bigO(\eta)$ expression, an integration leads to
\begin{equation}
	\overline{\lambda}_{1,\alpha}\int_\mathcal{X}	\psi_0 = \int_\mathcal{X}\Lr^\dagger \, \overline{\psi}_1 + \int_\mathcal{X} \Lp^\dagger\psi_0 + \alpha\int_\mathcal{X}\Lx^\dagger \, \psi_0.
	\label{lambda1_eq}
\end{equation}
From the definition of the $L^2$-adjoint, it follows that the first two terms on the right-hand side of \eqref{lambda1_eq} are 0, since $\Lr\ind = \Lp\ind = 0$. The remaining term can be written as
\begin{equation}
	\int_\mathcal{X}\Lx^\dagger \, \psi_0 = \int_\mathcal{X} \left(\Lx\ind\right) \psi_0 = \int_\mathcal{X}\left(\xi^T\nabla^*\ind\right) \psi_0 = \int_\mathcal{X}\xi^T\nabla\ind \, \psi_0 = 0.
\end{equation}
Thus, we conclude that $\overline{\lambda}_{1,\alpha} = 0$. Similarly, integrating \eqref{Oeta2_exp} on $\mathcal{X}$ leads to
\begin{equation}
	\overline{\lambda}_{2,\alpha} = \alpha\int_\mathcal{X}\Lx^\dagger \, \overline{\psi}_1 = \alpha\int_\mathcal{X}\left(\Lx\ind\right) \overline{\psi}_1,
	\label{lambda2}
\end{equation}
which is generally a nonzero quantity.
%
%
This suggests that
\begin{equation}
	\lambda_{\eta,\alpha} = \alpha\eta^2\int_\mathcal{X} \left(\Lx\ind\right) \overline{\psi}_1 + \bigO(\eta^3).
\end{equation}

\paragraph{Reformulating the Poisson equation} Computing the optimal value $\alpha^\star$ defined in~\eqref{aOpt1} requires computing the second-order response, as discussed in Section \ref{subsec:choosing_alpha}. In particular, this is done by using the recursive expression \eqref{eq6}, obtained from the Fokker--Planck equation via formal asymptotics. We reformulate \eqref{eq6} for Feynman--Kac systems to compute the second-order response, using that, in view of \eqref{Oeta2_exp},
\begin{equation}
	\overline{\psi}_{2,\alpha} = \left(-\Lr^{-1}\right)^\dagger\left[\left(\Lp^\dagger + \alpha\Lx^\dagger\right) \overline{\psi}_1 - \overline{\lambda}_{2,\alpha}\psi_0\right],
\end{equation}
where the value of $\overline{\lambda}_{2,\alpha}$ is determined by \eqref{lambda2}. This procedure could be extended to arbitrary orders by identifying terms with the same orders in $\eta$ in \eqref{FP_FK}.

\section{Numerical scheme for finite differences}
\label{appendix:num_schemes}
We describe here the numerical schemes used to discretize Fokker--Planck equations, used to solve for the density of the invariant probability measure for the systems at hand, namely overdamped Langevin dynamics in one and two dimensions, and Langevin dynamics in one dimension. This approach can be straightforwardly extended to solving Poisson equations with nontrivial right-hand sides such as \eqref{poissonf1} or the integrand in the asymptotic variance \eqref{variance}. We start each section by first discussing the discretizations associated with the equilibrium dynamics, after which we mention the modification needed when adding perturbations.

For all systems, the spatial domain, namely the torus $\T^d = [0,1)^d$, is discretized into $m_q^d$ points with uniform step size $h_q = 1/m_q$ in each direction. While $\psi_\eta$ denotes the invariant measure at the continuous level, we use $\Psi_\eta^h$ to denote its discretized counterpart, namely the approximations of the values of $\psi_\eta$ at the grid points.

\subsection{Overdamped Langevin dynamics}
\label{subapp:ovd_1d}
For a continuous function $u\colon\T\to\R$, we denote by $\dfxn{u}{i} = u(Q_i)$, where $Q_i = ih_q$ are the mesh points. Similarly, for a continuous function $u\colon\T^2\to\R$, we denote by $\dfxn{u}{i,j} = u(Q_{i,j})$, where $Q_{i,j} = (ih_q, jh_q)$ are the mesh points. The Fokker--Planck equation to discretize for the overdamped Langevin dynamics~\eqref{eq_ovd} reads
\begin{equation}
	\Lr^\dagger\psi_0 = \div(\nabla V\psi_0) + \frac{1}{\beta}\Delta\psi_0 = \nabla V^T \nabla\psi_0 + \Delta V\psi_0 + \frac{1}{\beta}\Delta\psi_0.
	\label{fp_ovd}
\end{equation}
Using centered finite differences, the discretization of \eqref{fp_ovd} in dimension one is given by
\begin{equation}
	\dfxn{V'}{i}\frac{\dpsi{0}{i+1} - \dpsi{0}{i-1}}{2h_q} + \dfxn{V''}{i}\dpsi{0}{i} + \frac{\dpsi{0}{i+1} - 2\dpsi{0}{i} + \dpsi{0}{i-1}}{\beta h_q^2} = 0.
	\label{disc_fp_ovd_1d}
\end{equation}
Periodic boundary conditions are imposed, \emph{i.e.} $\dpsi{0}{0} = \dpsi{0}{m_q}$. In two dimensions, applying centered finite differences to the Fokker--Planck \eqref{fp_ovd} yields
\begin{equation}
\begin{aligned}
	\dfxn{\partial_{q_1}V}{i,j}\frac{\dpsi{0}{i+1,j} - \dpsi{0}{i-1,j}}{2h_q} + \dfxn{\partial_{q_2}V}{i,j}\frac{\dpsi{0}{i,j+1} - \dpsi{0}{i,j-1}}{2h_q} + \dfxn{\Delta V}{i,j} \dpsi{0}{i,j}
	\\ + \frac{\dpsi{0}{i+1,j} + \dpsi{0}{i-1,j} - 4\dpsi{0}{i,j} + \dpsi{0}{i,j+1} + \dpsi{0}{i,j-1}}{\beta h_q^2} = 0.
\end{aligned}
\label{disc_fp_ovd_2d}
\end{equation}
Periodic boundary conditions are imposed, \emph{i.e.} $\dpsi{0}{0,:} = \dpsi{0}{m_q,:}$ and $\dpsi{0}{:,0} = \dpsi{0}{:,m_q}$.

\paragraph{Discretization of perturbations} We now consider several classes of perturbation operators. One typical form is $\Lt = F^T\nabla$, with $F = (F_1,F_2) \in \R^2$ for two-dimensional dynamics. Its $L^2$-adjoint acts as
\begin{equation}
	\Lt^\dagger\psi_0 = -\div(F)\psi_0 - F^T\nabla\psi_0.
	\label{perturbation1}
\end{equation}
Perturbations of this form include the physical perturbations $\Lp$ considered throughout this work, divergence-free vector fields \eqref{divFree}, and the differential term in the Feynman--Kac forcing \eqref{fk_ovd}. In dimension one, the-right hand side of \eqref{perturbation1} is discretized with a centered finite difference as
\begin{equation}
	-\dfxn{F'}{i}\dpsi{0}{i} - \dfxn{F}{i}\frac{\dpsi{0}{i+1} - \dpsi{0}{i-1}}{2h_q}.
\end{equation}
In dimension two, perturbations of the form \eqref{perturbation1} are discretized with centered finite differences as
\begin{equation}
\begin{aligned}
	-\left(\dfxn{F_1}{i,j}\frac{\dpsi{0}{i+1,j} - \dpsi{0}{i-1,j}}{2h_q} + \dfxn{F_2}{i,j}\frac{\dpsi{0}{i,j+1} - \dpsi{0}{i,j-1}}{2h_q}\right) \\
	- \dfxn{\partial_{q_1}F_1}{i,j}\dpsi{0}{i,j} - \dfxn{\partial_{q_2}F_2}{i,j}\dpsi{0}{i,j}.
\end{aligned}
\end{equation}

Another example is the modified fluctuation-dissipation perturbation \eqref{mfdr_ovd}, preceeded by a factor $\alpha\eta$ as presented in this work. Since it corresponds to the generator~\eqref{ovd_Lr} of the dynamics at hand, its discretization corresponds in any dimension to rescaling the discretized Fokker--Planck by $(1+\alpha\eta)$.

We also consider zero order operators, such as the source term for the Feynman--Kac forcing \eqref{fk_ovd}, namely $\xi^T\nabla V$. As terms of this form include no differential operators acting on $\psi_0$, their discretization is trivially done, in any dimension, by direct evaluation, \emph{e.g.} $\dfxn{V'}{i}\dpsi{0}{i}$ in dimension one.


\subsection{Langevin 1D}
For the one-dimensional Langevin dynamics, we discretize the unbounded momentum space as follows: we first truncate it to $[-p_\mathrm{max}, p_\mathrm{max}]$, then discretize it into $m_p$ interior points with uniform step size $h_p = 2p_\mathrm{max}/(m_p-1)$. For a continuous function $u\colon \T\times\R \to \R$, we denote by $\dfxn{u}{i,j} = u(Q_i, P_j)$, where $Q_i = ih_q$ and $P_j = jh_p$ are the mesh points.

The numerical scheme for the Fokker--Planck equation \eqref{lang_FP_L2} for Langevin dynamics is obtained with centered finite differences, except for the transport term $p^TM^{-1}\nabla_q\psi_0$, where an upwind scheme is used (see below for details). In dimension one, \eqref{lang_FP_L2} reads
\begin{equation}
    \Lr^\dagger\psi = -\L_\mathrm{ham}\psi + \gamma\left(M^{-1}\psi + M^{-1}p\partial_p\psi + \beta^{-1}\partial^2_p\psi\right).
    \label{lang_FP_1D}
\end{equation}
The discretization of \eqref{lang_FP_1D} then reads
\begin{equation}
\begin{aligned}
    \dfxn{V'}{i}\frac{\dpsi{0}{i,j+1} - \dpsi{0}{i,j-1}}{2h_p} - \frac{P^+_j\ddpsi{0}{i,j}^- + P^-_j\ddpsi{0}{i,j}^+}{M} + \frac{\gamma P_j}{M}\frac{\dpsi{0}{i,j+1} - \dpsi{0}{i,j-1}}{2h_p} \\ + \frac{\gamma}{M\beta}\frac{\dpsi{0}{i,j+1} - 2\dpsi{0}{i,j} + \dpsi{0}{i,j-1}}{h_p^2} + \frac{\gamma \dpsi{0}{i,j}}{M} = 0,
\end{aligned}
\end{equation}
where 
\begin{equation}
	p^+ = \max(p,0), \qquad p^- = \min(p,0),
	\label{p_uw_fd}
\end{equation}
and
\begin{equation}
	\ddpsi{0}{i,j}^+ = \frac{\dpsi{0}{i+1,j} - \dpsi{0}{i,j}}{h_q}, \qquad \ddpsi{0}{i,j}^- = \frac{\dpsi{0}{i,j} - \dpsi{0}{i-1,j}}{h_q}.
\end{equation}
Periodic boundary conditions are imposed on the positions, \emph{i.e.} $\dpsi{0}{0,j} = \dpsi{0}{m_q,j}$ for any $1\leq j\leq m_p$.

\paragraph{Discretization of perturbations} For Langevin dynamics, perturbations of the form~\eqref{perturbation1} can correspond to differential operators acting on the positions or momenta, \emph{i.e.} $F^T\nabla_q$ and $F^T\nabla_p$. In dimension one, these options reduce to $-F\partial_q\psi_0$ and $-F\partial_p\psi_0$, both of which are trivially discretized with centered finite differences.

For the modified fluctuation-dissipation in $p$, its discretization corresponds to scaling the $\L_\mathrm{FD}$ term in \eqref{lang_gen_split} by $(\gamma+\alpha\eta)$. For its position counterpart $-\beta^{-1}\nabla^*_q\nabla_q$, its discretization corresponds to adding \eqref{disc_fp_ovd_1d}, scaled by $\alpha\eta$.

\paragraph{Upwind scheme for Langevin} We now motivate the upwinding used to discretize the transport term $p^TM^{-1}\nabla_q$ in the Fokker--Planck equation \eqref{lang_FP_L2}. In particular, the use of a centered finite difference with an even number of points $m_q$ in the spatial domain would lead to independent submeshes, and hence might give incorrect results, an issue known as odd-even decoupling (see \cite{odd_even_decoupling} for a thorough discussion). To overcome this, one resorts to decentered finite differences, in particular upwinding. 

Recall that equation \eqref{general_FP} can be seen as the stationary solution to the evolution PDE $\partial_t\psi = \L^\dagger\psi$. To implement the upwinding scheme, we write the transport term in the Fokker--Planck in the form of the advection equation, that is
\begin{equation}
	\partial_t \psi + p^TM^{-1}\nabla_q\psi = 0.
\end{equation}
The scheme used to discretize the components of $\nabla_q\psi$ depends on the sign of the components of $p$. The (partial) derivative is discretized with a scheme decentered on the left when $p\geq 0$, and decentered on the right when $p\leq 0$. The discretization of $p\partial_q \psi$ in dimension one at a mesh point $(Q_i,P_j)$ is therefore done as follows:
\begin{equation}
	\begin{cases}
		p\dfrac{\dpsi{0}{i+1,j} - \dpsi{0}{i,j}}{h_q}, \qquad p<0,  \\
		p\dfrac{\dpsi{0}{i,j} - \dpsi{0}{i-1,j}}{h_q}, \qquad p>0.
	\end{cases}
\end{equation}

\section*{Acknowledgments}
This project has received funding from the European Union's Horizon 2020 research and innovation program under the Marie Sklodowska--Curie grant agreement No 945332, and from the European Research Council (ERC) under the European Union's Horizon 2020 research and innovation programme (project EMC2, grant agreement No 810367). We also acknowledge funding from the Agence Nationale de la Recherche, under grants ANR-19-CE40-0010-01 (QuAMProcs) and ANR-21-CE40-0006 (SINEQ). 

\bibliographystyle{siamplain}
\bibliography{references}
\end{document}